\numberwithin{equation}{section}
\newtheorem{corollary}{Corollary}[section]
\newtheorem{lemma}[corollary]{Lemma}
\newtheorem{proposition}[corollary]{Proposition}
\newtheorem{remark}[corollary]{Remark}
\newtheorem{theorem}[corollary]{Theorem}
\newfont{\sBlackboard}{msbm10 scaled 900}
\newcommand{\mylabel}[1]{\label{#1}
            \ifx\undefined\stillediting
            \else \fbox{$#1$}\fi }
\newcommand{\BE}{\begin{equation}}
\newcommand{\EEQ}{\end{equation}}
\newcommand{\rfb}[1]{\mbox{\rm
   (\ref{#1})}\ifx\undefined\stillediting\else:\fbox{$#1$}\fi}
\newfont{\Blackboard}{msbm10 scaled 1200}
\newfont{\roma}{cmr10 scaled 1200}
\def\CC{\rm \hbox{C\kern-.56em\raise.4ex
         \hbox{$\scriptscriptstyle |$}\kern+0.5 em }}
\def\Frac{\displaystyle\frac}
\def\Int{\displaystyle\int}
\def\n{|\kern -.05cm{|}\kern -.05cm{|}}
\def\R{{\bf \hbox{\sc I\hskip -2pt R}}} 
\newcommand{\mm}    {{\hbox{\hskip 0.5pt}}}
\newcommand{\bluff} {{\hbox{\raise 15pt \hbox{\mm}}}}
\def\section{\@startsection {section}{1}{\z@}{-3.5ex plus -1ex minus
    -.2ex}{2.3ex plus .2ex}{\large\bf}}
\def\be{\begin{equation}}
\def\ee{\end{equation}}
\date{ }
\begin{document}
\thispagestyle{empty}
\title{\bf Existence, uniqueness and global behavior of the solutions to some  nonlinear vector equations in  a finite dimensional Hilbert space }\maketitle

\author{ \center  Mama ABDELLI\\
Laboratory of Analysis and Control of Partial Differential Equations,\\
Djillali Liabes University, P. O. Box 89, Sidi Bel Abbes 22000,
Algeria.\\
abdelli$_{-}$mama@yahoo.fr\\}
\medskip\author{ \center  Mar\'ia ANGUIANO \\ Departamento de An\'alisis Matem\'atico, Facultad de Matem\'aticas,\\ Universidad de Sevilla, P. O. Box 1160, 41080-Sevilla, Spain\\ anguiano@us.es\\}
\medskip\author{ \center  Alain HARAUX \\ 
Sorbonne Universit\'es, UPMC Univ Paris 06, CNRS, UMR 7598, \\Laboratoire Jacques-Louis Lions, 4, place Jussieu 75005, Paris, France.\\
 haraux@ann.jussieu.fr\\}

\vskip20pt

 \renewcommand{\abstractname} {\bf Abstract}
\begin{abstract} The initial value problem and global properties of solutions are studied for  the vector  equation: $\Big(\|u'\|^{l}u'\Big)'+\|A^{\frac{1}{2}}u\|^\beta Au+g(u')=0$ in a finite dimensional Hilbert space under suitable assumptions on $g$.
\end{abstract}
\bigskip\noindent

 {\small \bf AMS classification numbers:} 34A34, 34C10, 34D05, 34E99

\bigskip\noindent {\small \bf Keywords:}  Existence of the solution, Uniqueness of the solution, Decay rate.\newpage
\section {Introduction}
Let $H$ be a finite dimensional  real  Hilbert space, with norm denoted by $\|.\|$. We consider the following nonlinear equation
 \begin{equation}\label{1}
 \Big(\|u'\|^{l}u'\Big)'+\|A^{\frac{1}{2}}u\|^\beta Au+g(u')=0,
 \end{equation}
 where  ${ l}$ and $\beta$ are positive constants,
 and $A$ is a positive and symmetric  linear operator on $H$. We denote by $\left(\cdot,\cdot \right)$ the inner product  in $H$. The operator $A$ is coercive, which means :
 $$\exists \lambda > 0,\,\,\, \forall u\in D(A),\,\,\,\, (Au,u)\geq \lambda \|u\|^2.
 $$
We also define
 $$
 \forall u\in H,\,\, \|A^{\frac{1}{2}}u\|: =\|u\|_{ D(A^{\frac{1}{2}})},
 $$
 a norm equivalent to  the norm in $H$. We assume that  $g:H\rightarrow H $ is locally Lipschitz continuous.

 When ${ l}=0$ and $g(u')=c|u'|^\alpha u'$ Haraux  {\bf\cite{har2}} studied the rate of decay  of the energy of non-trivial solutions to the scalar second order ODE. In addition, he showed that if $\alpha > \frac{\beta}{\beta+2}$ all non-trivial solutions are oscillatory and if $\alpha < \frac{\beta}{\beta+2}$ they are non-oscillatory. In the oscillatory case he established that all non-trivial solutions have the same decay rates, while in the non-oscillatory case he showed the coexistence of exactly two different decay rates, calling slow solutions those which have the lowest decay rate, and fast solutions the others. 

 Abdelli and  Haraux {\bf\cite{AMH}} studied the scalar second order ODE where $g(u')=c|u'|^\alpha u'$, they proved the existence and uniqueness of a global solution with initial data $(u_0,u_1)\in \mathbb{R}^2$. They used some modified energy function to estimate the rate of decay and they used the method introduced by Haraux {\bf\cite{har2}} to study the oscillatory or non-oscillatory of non-trivial solutions. If $\alpha > \frac{\beta(l+1)+l}{\beta+2}$ all non-trivial solutions are oscillatory and if $\alpha < \frac{\beta(l+1)+l}{\beta+2}$ they are non-oscillatory. In the non-oscillatory rate, as in the case ${ l}=0$, the coexistence of exactly two different decay rates was established.

 In this article, we use some techniques from  Abdelli and  Haraux {\bf\cite{AMH}} to establish a global existence and uniqueness result of the solutions, and under some additional conditions on $g$ (typically $g(s)\sim c\|s\|^\alpha s$), we study the asymptotic behavior as $t\rightarrow \infty$. A basic role will be played by the total energy of the solution $u$ given by the formula 
\begin{eqnarray}\label{energy} E(t) = \frac{l+1}{l+2}\|u'(t)\|^{l+2}+\frac{1}{\beta+2}\|A^{\frac{1}{2}}u(t)\|^{\beta+2}.\end{eqnarray}

 The plan of this paper is as follows: In Section $2$ we establish some basic preliminary inequalities, and in Section $3$ we prove the existence of a solution  $u\in {\mathcal C}^1(\mathbb{R}^+,H)$ with $\|u'\|^{l}u'\in {\mathcal C}^1(\mathbb{R}^+,H)$ for any initial data $(u_0,u_1)\in H \times H$ under relevant conditions on $g$ and the conservation of total energy for each such solution. In Section $4$ we establish the uniqueness of the solution in the same regularity class under additional conditions on $g$. In Section $5$ we prove convergence of all solutions to $0$ under more specific conditions on $g$ and we estimate the decay rate of the energy. Finally, in Section 6, we discuss the optimality of these estimates when $g(s) = c\|s\|^\alpha s$ and $l< \alpha< \frac{\beta(1+l)+l}{\beta+2}$;  in particular, by relying on a technique introduced by Ghisi, Gobbino and Haraux {\bf\cite{GGH}}, we prove the existence of a open set of initial states giving rise to slow decaying solutions. In our last result, by relying on a technique introduced by Ghisi, Gobbino and Haraux \cite{GhGoHa}, we prove that all non-zero solutions are either slow solutions or fast solutions. 

 \section{Some basic inequalities}
 In this section, we establish some easy but powerful lemmas which generalize  Lemma 2.2 and  Lemma 2.3 from {\bf\cite{AIH}} and will be essential for the existence and uniqueness proofs of the next section. Troughout this section, $H$ denotes an arbitrary (not necessary finite dimensional) real Hilbert space with norm denoted by $\|.\|$.
  \begin{proposition}
Let $ (u, v) \in H\times H $ and $(\alpha, \beta)\in \R^+\times \R^+$. Then the condition 
$$ (\alpha- \beta)(\|u\|- \|v\|)\ge 0, $$ implies $$ (\alpha u- \beta v)(u-v) \ge \frac{1}{2}(\alpha+\beta)\|u-v\|^2.$$
\end{proposition}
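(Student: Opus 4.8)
The plan is to reduce the inequality to an elementary algebraic identity by expanding both sides with respect to the inner product, and then to exploit a cancellation that makes the hypothesis exactly the sign condition required.

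First I would expand the left-hand side,
$$(\alpha u - \beta v, u - v) = \alpha\|u\|^2 - (\alpha+\beta)(u,v) + \beta\|v\|^2,$$
and separately the quadratic form on the right-hand side,
$$\tfrac{1}{2}(\alpha+\beta)\|u-v\|^2 = \tfrac{1}{2}(\alpha+\beta)\|u\|^2 - (\alpha+\beta)(u,v) + \tfrac{1}{2}(\alpha+\beta)\|v\|^2.$$

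The decisive observation is that both expansions contain the identical term $-(\alpha+\beta)(u,v)$, so upon subtracting the second from the first all dependence on the inner product $(u,v)$ — and hence on the relative position of $u$ and $v$ — cancels. What survives is only a combination of the squared norms:
$$(\alpha u - \beta v, u-v) - \tfrac{1}{2}(\alpha+\beta)\|u-v\|^2 = \Big(\alpha-\tfrac{\alpha+\beta}{2}\Big)\|u\|^2 + \Big(\beta-\tfrac{\alpha+\beta}{2}\Big)\|v\|^2 = \tfrac{1}{2}(\alpha-\beta)\big(\|u\|^2-\|v\|^2\big).$$

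Finally I would factor $\|u\|^2-\|v\|^2=(\|u\|-\|v\|)(\|u\|+\|v\|)$, which gives
$$(\alpha u - \beta v, u-v) - \tfrac{1}{2}(\alpha+\beta)\|u-v\|^2 = \tfrac{1}{2}\,(\alpha-\beta)(\|u\|-\|v\|)\,(\|u\|+\|v\|).$$
Since $\|u\|+\|v\|\ge 0$ always, and the hypothesis supplies $(\alpha-\beta)(\|u\|-\|v\|)\ge 0$, the right-hand side is nonnegative, which is precisely the asserted inequality. I do not anticipate any genuine obstacle: the argument is essentially a one-line computation once the cross terms are seen to cancel. The only conceptual point worth stressing is that this cancellation is exactly what turns the problem into controlling the sign of $(\alpha-\beta)(\|u\|^2-\|v\|^2)$, which explains why the hypothesis is naturally phrased as the product of the two differences $\alpha-\beta$ and $\|u\|-\|v\|$.
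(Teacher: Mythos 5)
Your proof is correct and is essentially the same as the paper's: the paper states the identity $(\alpha u-\beta v,u-v)=\frac{1}{2}(\alpha-\beta)(\|u\|^2-\|v\|^2)+\frac{1}{2}(\alpha+\beta)\|u-v\|^2$ directly and then factors $\|u\|^2-\|v\|^2=(\|u\|+\|v\|)(\|u\|-\|v\|)$ to invoke the hypothesis, exactly as you do. Your expansion of both sides merely verifies that identity explicitly, so there is no substantive difference.
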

\begin{proof} An easy calculation gives the identity
$$ (\alpha u- \beta v)(u-v) =  \frac{1}{2}(\alpha- \beta)(\|u\|^2- \|v\|^2)+ \frac{1}{2}(\alpha+\beta)\|u-v\|^2.$$
Since $$ (\alpha- \beta)(\|u\|^2- \|v\|^2)= (\|u\|+ \|v\|)(\alpha- \beta)(\|u\|- \|v\|),$$ 
the result follows immediately.
\end{proof}

For the next results, we consider a number $R>0$ and we set 
$$J_R = [0, R]; \quad  B_R : = \{ u\in H, \,\,\, \|u\|\le R\}$$

\begin{proposition}
Let $f: J_R \longrightarrow {\R}^+$ be non increasing and such that for some positive numbers $p$, $c$:
$$ \forall s\in J_R ,\quad  f(s) \ge c s^p.  $$

 Then we have  
$$ \forall (u, v) \in B_R\times B_R,\quad  (f(\|u\|) u- f(\|v\|) v)(u-v) \ge \frac{c}{2}(\|u\|^p+\|v\|^p)\|u-v\|^2. $$
\end{proposition}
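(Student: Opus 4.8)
The plan is to reduce the statement to the preceding proposition by taking for the scalars $\alpha$ and $\beta$ the values of $f$ at the two points. Concretely, for a fixed pair $(u,v)\in B_R\times B_R$ I set $\alpha:=f(\|u\|)$ and $\beta:=f(\|v\|)$. These are well defined and nonnegative, since $\|u\|,\|v\|\in J_R$ and $f$ takes values in $\R^+$, so the pair $(\alpha,\beta)$ is admissible for the preceding proposition. With this choice the quantity to be estimated, $(f(\|u\|)u-f(\|v\|)v)(u-v)$, is literally $(\alpha u-\beta v)(u-v)$, and the preceding proposition applies as soon as its sign hypothesis $(\alpha-\beta)(\|u\|-\|v\|)\ge 0$ is checked.

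Verifying this sign condition is the one point that uses the structural hypothesis on $f$. Because $f$ is monotone, the real numbers $f(\|u\|)$ and $f(\|v\|)$ are ordered in accordance with the norms $\|u\|$ and $\|v\|$; hence the factors $f(\|u\|)-f(\|v\|)$ and $\|u\|-\|v\|$ share the same sign (or one of them vanishes), and their product is $\ge 0$. This is exactly the hypothesis $(\alpha-\beta)(\|u\|-\|v\|)\ge 0$. It is worth stressing that this monotonicity step, not any inequality, is what controls the cross term arising from the inner product: the reverse-triangle mechanism is already packaged inside the preceding proposition, through the algebraic identity proved there.

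Granting the sign condition, the preceding proposition yields $(f(\|u\|)u-f(\|v\|)v)(u-v)\ge \frac12\left(f(\|u\|)+f(\|v\|)\right)\|u-v\|^2$, and it then remains only to replace the coefficient $\frac12(f(\|u\|)+f(\|v\|))$ by the claimed one. Since $\|u\|$ and $\|v\|$ lie in $J_R$, the pointwise minorization $f(s)\ge c s^p$ gives $f(\|u\|)\ge c\|u\|^p$ and $f(\|v\|)\ge c\|v\|^p$, whence $f(\|u\|)+f(\|v\|)\ge c\left(\|u\|^p+\|v\|^p\right)$. As this bound multiplies the nonnegative factor $\|u-v\|^2$, the inequality is preserved, and I obtain $(f(\|u\|)u-f(\|v\|)v)(u-v)\ge \frac{c}{2}(\|u\|^p+\|v\|^p)\|u-v\|^2$, which is the assertion.

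I do not expect a serious obstacle: the whole argument is a specialization of the preceding proposition followed by a pointwise minorization of $f$. The only places demanding care are confirming that $\|u\|,\|v\|\in J_R$, so that $f(\|u\|)$ and $f(\|v\|)$ are defined and the bound $f\ge c s^p$ may be invoked, and deducing the sign hypothesis of the preceding proposition from the monotonicity of $f$; once these are in place, everything else is automatic.
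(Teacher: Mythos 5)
Your strategy is exactly the paper's proof: the paper, too, simply applies the preceding proposition with $\alpha=f(\|u\|)$, $\beta=f(\|v\|)$ and then bounds $\alpha+\beta$ from below by $c(\|u\|^p+\|v\|^p)$. The gap sits in the one step you yourself single out as essential, the verification of the sign hypothesis, and as written it is backwards. The proposition assumes $f$ is \emph{non-increasing}, i.e.\ order-reversing: if $\|u\|\ge\|v\|$ then $f(\|u\|)\le f(\|v\|)$, so $f(\|u\|)-f(\|v\|)$ and $\|u\|-\|v\|$ have \emph{opposite} signs, giving $(\alpha-\beta)(\|u\|-\|v\|)\le 0$ rather than $\ge 0$. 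Your claim that monotonicity makes $f(\|u\|)$ and $f(\|v\|)$ ``ordered in accordance with the norms'' is true only for non-decreasing $f$. Moreover, the defect is not repairable under the stated hypothesis, because the statement itself is then false: take $H=\mathbb{R}$, $R=1$, $c=p=1$, $f(s)=2-s$ (non-increasing, with $f(s)\ge s$ on $[0,1]$), $u=1$, $v=\tfrac12$; then
\[
\bigl(f(\|u\|)u-f(\|v\|)v\bigr)(u-v)=\Bigl(1\cdot 1-\tfrac32\cdot\tfrac12\Bigr)\Bigl(1-\tfrac12\Bigr)=\tfrac18,
\qquad
\tfrac{c}{2}\bigl(\|u\|^p+\|v\|^p\bigr)\|u-v\|^2=\tfrac12\cdot\tfrac32\cdot\tfrac14=\tfrac{3}{16},
\]
and $\tfrac18<\tfrac{3}{16}$.

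What this reveals is a typo in the paper rather than a flaw in your plan: the hypothesis should read ``non-decreasing.'' That is the version the paper actually uses downstream --- Corollary 2.5 is applied in the proof of Theorem 3.1 with $f(s)=(\varepsilon+s^2)^{l/2}$, and Corollary 2.3 in the proof of Proposition 4.4 with $f(s)=s^l$, both increasing functions --- and under that corrected hypothesis your argument is complete and coincides with the paper's one-line proof. So: same approach as the paper, and correct once the monotonicity assumption is fixed; but as submitted, your proof asserts a false implication (non-increasing $\Rightarrow$ order-preserving on the norms) at precisely the point where the hypothesis is actually needed, so you should state the correction explicitly rather than silently use it.
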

\begin{proof} Applying the previous result with $\alpha = f(\|u\|)$ and $\beta = f(\|v\|)$ the result follows immediately. \end{proof}
\begin{corollary}\label{coerc}
Let $f: J_R \longrightarrow {\R}^+$ be non increasing and such that for some positive numbers  $p$, $c$:
$$ \forall s\in J_R ,\quad  f(s) \ge c s^p.  $$

 Then we have 
 $$ \forall (u, v) \in B_R\times B_R,\quad  \|f(\|u\|) u- f(\|v\|) v)\| \ge \frac{c}{2}(\|u\|^p+\|v\|^p)\|u-v\|. $$
  \end{corollary}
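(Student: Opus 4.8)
The plan is to deduce this coercivity-type estimate directly from the inner product inequality established in the preceding Proposition, using nothing more than the Cauchy--Schwarz inequality. Since the hypotheses on $f$, $p$, $c$, $R$ are identical to those of the preceding Proposition, I may invoke its conclusion verbatim: for every $(u,v)\in B_R\times B_R$,
$$ (f(\|u\|) u- f(\|v\|) v)(u-v) \ge \frac{c}{2}(\|u\|^p+\|v\|^p)\|u-v\|^2. $$

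First I would bound the left-hand side above by Cauchy--Schwarz, writing
$$ (f(\|u\|) u- f(\|v\|) v)(u-v) \le \|f(\|u\|) u- f(\|v\|) v\|\,\|u-v\|. $$
Chaining this with the Proposition yields
$$ \|f(\|u\|) u- f(\|v\|) v\|\,\|u-v\| \ge \frac{c}{2}(\|u\|^p+\|v\|^p)\|u-v\|^2. $$

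Next I would divide by $\|u-v\|$ to obtain the claimed inequality. The only point requiring a moment's care is the degenerate case $u=v$: here $\|u-v\|=0$, so the division is not allowed, but in that case the right-hand side of the desired inequality vanishes while the left-hand side is a norm, hence nonnegative, so the inequality holds trivially. For $u\neq v$ one has $\|u-v\|>0$ and the division is legitimate, giving
$$ \|f(\|u\|) u- f(\|v\|) v\| \ge \frac{c}{2}(\|u\|^p+\|v\|^p)\|u-v\|. $$

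There is essentially no obstacle here: the entire content has already been absorbed into the preceding Proposition, and what remains is the standard passage from an inner-product lower bound to a norm lower bound via Cauchy--Schwarz, together with the trivial treatment of the diagonal $u=v$. I would therefore keep the write-up to these two lines plus the remark on the degenerate case.
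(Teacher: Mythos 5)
Your proposal is correct and is precisely the argument the paper intends: the paper's own proof says the inequality ``follows immediately from Cauchy--Schwarz inequality combined with the conclusion of the previous proposition,'' and you have simply spelled out that one-line chain (upper bound the inner product by Cauchy--Schwarz, compare with the proposition's lower bound, divide by $\|u-v\|$, with the trivial check at $u=v$). Nothing differs in substance; your write-up just makes the division step and the degenerate case explicit.
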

\begin{proof} This inequality follows immediately from Cauchy-Schwarz inequality combined with the conclusion of the previous proposition. \end{proof}
\begin{proposition}
Let $f: J_R \longrightarrow {\R}^+$ be non increasing and such that for some positive numbers  $p$, $c$:
$$ \forall s\in J_R ,\quad  f(s) \ge c s^p.  $$

 Then we have  
$$ \forall (u, v) \in B_R\times B_R,\quad  (f(\|u\|) u- f(\|v\|) v)(u-v) \ge \delta\|u-v\|^{p+2}, $$ with $ \delta = \frac{c}{2^{\max\{p,1\}}}$.  
\end{proposition}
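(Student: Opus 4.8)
The plan is to bootstrap from the preceding proposition, which already supplies
$$ (f(\|u\|) u- f(\|v\|) v)(u-v) \ge \frac{c}{2}(\|u\|^p+\|v\|^p)\|u-v\|^2. $$
Comparing this with the target inequality, the whole problem reduces to the purely scalar estimate
$$ \|u\|^p+\|v\|^p \ge 2^{\,1-\max\{p,1\}}\,\|u-v\|^p, $$
because multiplying it by $\frac{c}{2}\|u-v\|^2$ and using $\frac{c}{2}\cdot 2^{\,1-\max\{p,1\}} = \frac{c}{2^{\max\{p,1\}}}$ produces exactly the claimed constant $\delta$. (If $u=v$ both sides of the statement vanish, so one may assume $u\neq v$ and divide freely.)

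First I would strip away the vector structure. Since $p>0$, the map $t\mapsto t^p$ is increasing, and the triangle inequality $\|u-v\|\le \|u\|+\|v\|$ gives $\|u-v\|^p\le(\|u\|+\|v\|)^p$. Writing $a:=\|u\|$ and $b:=\|v\|$, it therefore suffices to establish the one-variable inequality
$$ (a+b)^p \le 2^{\,\max\{p,1\}-1}\,(a^p+b^p), \qquad a,b\ge 0. $$

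The core of the argument is then a dichotomy on $p$. For $p\ge 1$ the function $t\mapsto t^p$ is convex, so $\big(\tfrac{a+b}{2}\big)^p\le \tfrac12(a^p+b^p)$, which rearranges to $(a+b)^p\le 2^{p-1}(a^p+b^p)$, matching $\max\{p,1\}=p$. For $0<p\le 1$ the same function is subadditive, whence $(a+b)^p\le a^p+b^p=2^{0}(a^p+b^p)$, matching $\max\{p,1\}=1$. Chaining this scalar estimate with the triangle inequality and the conclusion of the previous proposition closes the proof.

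I do not anticipate a genuine obstacle here; the only delicate point is the exponent bookkeeping, namely checking that the prefactor $\frac{c}{2}\cdot 2^{\,1-\max\{p,1\}}$ collapses to $\frac{c}{2^{\max\{p,1\}}}$ in both regimes so that the stated $\delta$ is reproduced exactly. As an alternative to the convexity/subadditivity split, one could prove the scalar inequality uniformly by homogeneity, normalizing $a+b=1$ and minimizing $a^p+(1-a)^p$ on $[0,1]$; but the two-case approach is cleaner and makes the appearance of $\max\{p,1\}$ transparent.
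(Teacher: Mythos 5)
Your proof is correct and takes essentially the same route as the paper: the paper's proof is precisely to apply the preceding proposition together with the inequality $\|u-v\|^{p} \le 2^{\max\{p-1,0\}}(\|u\|^p+\|v\|^p)$, which is identical to your scalar estimate since $\max\{p,1\}-1=\max\{p-1,0\}$. The only difference is that you additionally prove this scalar inequality (via the triangle inequality plus the convexity/subadditivity dichotomy), whereas the paper cites it without proof.
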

\begin{proof} It is sufficient to apply the previous result combined with the inequality
 $$\|u-v\|^{p} \le 2^{\max\{p-1,0\}}(\|u\|^p+\|v\|^p).$$ \end{proof}
 
 \begin{corollary}\label{hold}
Let $f: J_R \longrightarrow {\R}^+$ be non increasing and such that for some positive numbers  $p$, $c$:
$$ \forall s\in J_R ,\quad  f(s) \ge c s^p.  $$

 Then we have for some constant $C= C(c, p)>0, $ 
$$ \forall (u, v) \in B_R\times B_R,\quad  \|u-v\|\le C \|f(\|u\|) u- f(\|v\|) v)\|^{\frac{1}{p+1}}. $$ \end{corollary}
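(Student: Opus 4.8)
The plan is to chain together the lower bound from the immediately preceding Proposition with an elementary Cauchy--Schwarz upper bound for the same quantity, and then solve for $\|u-v\|$. First I would dispose of the trivial case: if $u=v$ both sides of the claimed inequality vanish, so from now on I assume $u\neq v$, which guarantees $\|u-v\|>0$ and allows division.

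The heart of the argument is the two-sided control of the bilinear expression $(f(\|u\|)u-f(\|v\|)v)(u-v)$. On the one hand, the previous Proposition gives the lower bound
$$ (f(\|u\|)u-f(\|v\|)v)(u-v)\ge \delta\,\|u-v\|^{p+2},\qquad \delta=\frac{c}{2^{\max\{p,1\}}}. $$
On the other hand, the Cauchy--Schwarz inequality yields the upper bound
$$ (f(\|u\|)u-f(\|v\|)v)(u-v)\le \|f(\|u\|)u-f(\|v\|)v\|\,\|u-v\|. $$
Combining the two displays and dividing through by $\|u-v\|>0$, I obtain
$$ \delta\,\|u-v\|^{p+1}\le \|f(\|u\|)u-f(\|v\|)v\|. $$

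Finally I would isolate $\|u-v\|$: raising to the power $\frac{1}{p+1}$ gives
$$ \|u-v\|\le \Big(\tfrac{1}{\delta}\Big)^{\frac{1}{p+1}}\|f(\|u\|)u-f(\|v\|)v\|^{\frac{1}{p+1}}, $$
so the conclusion holds with the explicit constant $C=(1/\delta)^{1/(p+1)}=\big(2^{\max\{p,1\}}/c\big)^{1/(p+1)}$, which indeed depends only on $c$ and $p$ as claimed. Since every step is an application of an already-established inequality, I do not expect any genuine obstacle here; the only point requiring a word of care is the separate (but immediate) treatment of the degenerate case $u=v$ before dividing by $\|u-v\|$.
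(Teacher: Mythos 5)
Your proof is correct and follows exactly the paper's intended argument: the paper's own proof is the one-line remark that the inequality ``follows immediately from Cauchy--Schwarz inequality combined with the conclusion of the previous proposition,'' which is precisely the chain you carried out, with the constant $C=\big(2^{\max\{p,1\}}/c\big)^{1/(p+1)}$ made explicit and the degenerate case $u=v$ handled cleanly.
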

\begin{proof} This inequality follows immediately from Cauchy-Schwarz inequality combined with the conclusion of the previous proposition. \end{proof}
\begin{lemma}\label{LEM12} Assume that $A$ is a positive, symmetric, bounded operator on $H$. for some constant $D>0$ we have 
$
\forall (w,v)\in H\times H$
$$
\forall (w,v)\in H\times H,\,\,\|\|A^{\frac{1}{2}}w\|^{\beta}Aw-\|A^{\frac{1}{2}}v\|^{\beta}Av\|\leq D\max(\|A^{\frac{1}{2}}v\|,\|A^{\frac{1}{2}}w\|)^{\beta}\|w-v\|.
$$
\end{lemma}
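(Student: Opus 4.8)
The plan is to reduce the vector estimate to a one-dimensional inequality for the power function $t\mapsto t^{\beta}$ by a telescoping argument. First I would insert the intermediate vector $\|A^{\frac{1}{2}}w\|^{\beta}Av$ and write
$$\|A^{\frac{1}{2}}w\|^{\beta}Aw-\|A^{\frac{1}{2}}v\|^{\beta}Av=\|A^{\frac{1}{2}}w\|^{\beta}A(w-v)+\Big(\|A^{\frac{1}{2}}w\|^{\beta}-\|A^{\frac{1}{2}}v\|^{\beta}\Big)Av.$$
By the triangle inequality the left-hand norm is then at most $\|A^{\frac{1}{2}}w\|^{\beta}\,\|A\|\,\|w-v\|+\big|\|A^{\frac{1}{2}}w\|^{\beta}-\|A^{\frac{1}{2}}v\|^{\beta}\big|\,\|Av\|$, so it suffices to control these two terms separately.

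The key scalar fact I would isolate and prove is that for all $a,b\ge 0$ and $\beta>0$,
$$\big|a^{\beta}-b^{\beta}\big|\le \max\{1,\beta\}\,\max\{a,b\}^{\beta-1}\,|a-b|.$$
By homogeneity this reduces, after dividing by $\max\{a,b\}^{\beta}$ and setting $t=\min\{a,b\}/\max\{a,b\}\in[0,1]$, to the claim $1-t^{\beta}\le \max\{1,\beta\}\,(1-t)$ on $[0,1]$. For $\beta\le 1$ this is immediate from $t^{\beta}\ge t$, and for $\beta\ge 1$ it follows by checking that $t\mapsto \beta(1-t)-(1-t^{\beta})$ vanishes at $t=1$ and is non-increasing on $[0,1]$.

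Finally I would assemble the pieces. Applying the scalar inequality with $a=\|A^{\frac{1}{2}}w\|$, $b=\|A^{\frac{1}{2}}v\|$ and using $\big|\|A^{\frac{1}{2}}w\|-\|A^{\frac{1}{2}}v\|\big|\le\|A^{\frac{1}{2}}(w-v)\|\le\|A^{\frac{1}{2}}\|\,\|w-v\|$ gives
$$\big|\|A^{\frac{1}{2}}w\|^{\beta}-\|A^{\frac{1}{2}}v\|^{\beta}\big|\le \max\{1,\beta\}\,\|A^{\frac{1}{2}}\|\,\max\{\|A^{\frac{1}{2}}v\|,\|A^{\frac{1}{2}}w\|\}^{\beta-1}\,\|w-v\|.$$
The decisive observation, which is what lets the hypotheses stay merely \emph{positive, symmetric, bounded} with no coercivity required, is that $\|Av\|=\|A^{\frac{1}{2}}(A^{\frac{1}{2}}v)\|\le\|A^{\frac{1}{2}}\|\,\|A^{\frac{1}{2}}v\|\le\|A^{\frac{1}{2}}\|\,\max\{\|A^{\frac{1}{2}}v\|,\|A^{\frac{1}{2}}w\|\}$; combined with $\|A^{\frac{1}{2}}w\|^{\beta}\le\max\{\|A^{\frac{1}{2}}v\|,\|A^{\frac{1}{2}}w\|\}^{\beta}$ and $\|A\|=\|A^{\frac{1}{2}}\|^{2}$ in the first term, every contribution then carries exactly the factor $\max\{\|A^{\frac{1}{2}}v\|,\|A^{\frac{1}{2}}w\|\}^{\beta}\,\|w-v\|$, and one reads off $D=(1+\max\{1,\beta\})\,\|A^{\frac{1}{2}}\|^{2}$. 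The main obstacle is really the scalar inequality in the regime $\beta<1$, where $t\mapsto t^{\beta}$ is only H\"older continuous near the origin so a naive mean value estimate fails; the homogeneity reduction above is the clean way around it.
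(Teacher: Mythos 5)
Your proof is correct, but it follows a genuinely different route from the paper's. The paper first factors the operator out of the difference, writing
$\|A^{\frac{1}{2}}w\|^{\beta}Aw-\|A^{\frac{1}{2}}v\|^{\beta}Av
= A^{\frac{1}{2}}\bigl(\|A^{\frac{1}{2}}w\|^{\beta}A^{\frac{1}{2}}w-\|A^{\frac{1}{2}}v\|^{\beta}A^{\frac{1}{2}}v\bigr)$,
so that the whole problem reduces to the single vector inequality
$\|\,\|x\|^{\beta}x-\|y\|^{\beta}y\,\|\le C_1\max(\|x\|,\|y\|)^{\beta}\|x-y\|$
applied to $x=A^{\frac{1}{2}}w$, $y=A^{\frac{1}{2}}v$; that inequality is not reproved but quoted as ``direct calculations'' from Lemma 2.2 of the reference [AIH], and the final step is just $\|A^{\frac{1}{2}}x-A^{\frac{1}{2}}y\|\le\|A^{\frac{1}{2}}\|\,\|x-y\|$. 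You instead insert the mixed term $\|A^{\frac{1}{2}}w\|^{\beta}Av$, split off the scalar difference $\|A^{\frac{1}{2}}w\|^{\beta}-\|A^{\frac{1}{2}}v\|^{\beta}$, and prove the needed scalar estimate $|a^{\beta}-b^{\beta}|\le\max\{1,\beta\}\max\{a,b\}^{\beta-1}|a-b|$ from scratch by homogeneity --- your case analysis at $\beta\le1$ versus $\beta\ge1$ is sound, and this is exactly the point where a naive mean-value argument would break down for $\beta<1$. What your version buys is self-containedness (no appeal to [AIH]), an explicit constant $D=(1+\max\{1,\beta\})\|A^{\frac{1}{2}}\|^{2}$, and a clearly flagged use of $\|Av\|\le\|A^{\frac{1}{2}}\|\,\|A^{\frac{1}{2}}v\|$ showing that boundedness alone (no coercivity) suffices; what the paper's factorization buys is brevity and structural symmetry, since passing everything through $A^{\frac{1}{2}}$ makes the conversion of $\|A^{\frac{1}{2}}w-A^{\frac{1}{2}}v\|$ into $\|w-v\|$ and the appearance of the factor $\max(\|A^{\frac{1}{2}}v\|,\|A^{\frac{1}{2}}w\|)^{\beta}$ come out of one application of the cited lemma rather than from an asymmetric two-term decomposition.
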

\begin{proof}
 We can write
{\small\begin{equation*}
 \begin{split}
\|\|A^{\frac{1}{2}}w\|^{\beta}Aw-\|A^{\frac{1}{2}}v\|^{\beta}Av\|&= \|A^{\frac{1}{2}}(\|A^{\frac{1}{2}}w\|^{\beta}A^{\frac{1}{2}}w-\|A^{\frac{1}{2}}v\|^{\beta}A^{\frac{1}{2}}v)\|
\\&\leq\|A^{\frac{1}{2}}\|\|\|A^{\frac{1}{2}}w\|^{\beta}A^{\frac{1}{2}}w-\|A^{\frac{1}{2}}v\|^{\beta}A^{\frac{1}{2}}v\|.
\end{split}
\end{equation*}}
Direct calculations (see also {\bf\cite{AIH}}, Lemma 2.2)  yield
{\small\begin{equation*}
 \begin{split}
\|\|A^{\frac{1}{2}}w\|^{\beta}A^{\frac{1}{2}}w-\|A^{\frac{1}{2}}v\|^{\beta}A^{\frac{1}{2}}v\|&\leq C_1\max(\|A^{\frac{1}{2}}v\|,\|A^{\frac{1}{2}}w\|)^{\beta}\|A^{\frac{1}{2}}w-A^{\frac{1}{2}}v\|\\&\leq C_2\max(\|A^{\frac{1}{2}}v\|,\|A^{\frac{1}{2}}w\|)^{\beta}\|w-v\|,
\end{split}
\end{equation*}}
and the result is proved.
\end{proof}

 \section{Global existence and energy conservation for equation (\ref{1})}
 In this section, we study the existence of a solution for the initial value problem associated to equation (\ref{1}) where $g:H\rightarrow H$
is a locally Lipschitz continuous  function which satisfies the following hypothesis:
 \begin{equation}\label{S1}
 \exists k_1> 0,\,\,\,\, k_2> 0,\,\,\, \forall v,\,\,\,  (g(v),v)\geq -k_1-k_2\|v\|^{l+2}.
 \end{equation}
\begin{theorem}\label{Pr1}
Let $(u_0,u_1)\in H\times H$.
The problem (\ref{1}) has a  global solution satisfying
$$u\in {\mathcal C}^1(\mathbb{R}^+,H),\,\,\,\,\,\,\|u'\|^{l}u'\in {\mathcal C}^1(\mathbb{R}^+,H)\,\,\,\,\
\mbox{and}\,\,\,\, u(0)= u_0,\,\,\ u'(0)= u_1.$$
\end{theorem}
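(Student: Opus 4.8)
The plan is to recast \eqref{1} as a first-order system, obtain a local solution by a compactness (Cauchy--Peano) argument, and then use the energy to produce an a priori bound that rules out blow-up.

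First I would introduce the new unknown $v:=\|u'\|^{l}u'$. Since $H$ is finite dimensional, the map $\Phi:w\mapsto\|w\|^{l}w$ is a homeomorphism of $H$, with $\|\Phi(w)\|=\|w\|^{l+1}$ and continuous inverse $\Phi^{-1}(v)=\|v\|^{-l/(l+1)}v$ (set $\Phi^{-1}(0)=0$). Writing $u'=\Phi^{-1}(v)$, equation \eqref{1} becomes equivalent to the system
\begin{equation*}
u'=\Phi^{-1}(v),\qquad v'=-\|A^{\frac12}u\|^{\beta}Au-g\big(\Phi^{-1}(v)\big),
\end{equation*}
with $\big(u(0),v(0)\big)=\big(u_0,\|u_1\|^{l}u_1\big)$. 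The right-hand side is a continuous function of $(u,v)$, since $\Phi^{-1}$ is continuous (it is only H\"older, not Lipschitz, at the origin when $l>0$), $A$ is a bounded linear operator, and $g$ is continuous. Hence the Cauchy--Peano theorem gives a local $C^{1}$ solution $(u,v)$ on a maximal interval $[0,T_{\max})$; translating back yields $u\in C^{1}$, $\|u'\|^{l}u'=v\in C^{1}$, and the prescribed initial data. I stress that uniqueness is deliberately not claimed at this stage, which is precisely why a Peano-type (rather than Picard) argument is used; uniqueness is postponed to Section 4.

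Next I would establish the energy identity by taking the inner product of the second equation with $u'$. Using the symmetry of $A$, the potential term contributes $\|A^{\frac12}u\|^{\beta}(Au,u')=\frac{1}{\beta+2}\frac{d}{dt}\|A^{\frac12}u\|^{\beta+2}$. For the kinetic term, since $\|u'\|^{l+2}=\|v\|^{(l+2)/(l+1)}$ and the exponent $(l+2)/(l+1)$ exceeds $1$, the map $t\mapsto\|v(t)\|^{(l+2)/(l+1)}$ is $C^{1}$ even at instants where $v$ vanishes, and one checks $\frac{l+1}{l+2}\frac{d}{dt}\|u'\|^{l+2}=(v',u')$. Combining the two and using the equation gives
\begin{equation*}
E'(t)=-(g(u'),u'),
\end{equation*}
with $E$ the energy \eqref{energy}. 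This differentiability point --- $u'$ is merely continuous, yet the kinetic energy is $C^{1}$ --- is the one place that genuinely requires care.

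Finally I would close the argument with an a priori bound. Hypothesis \eqref{S1} yields $E'(t)=-(g(u'),u')\le k_1+k_2\|u'\|^{l+2}$, and since $E(t)\ge\frac{l+1}{l+2}\|u'\|^{l+2}$ we obtain $E'(t)\le k_1+k_2\frac{l+2}{l+1}E(t)$. Gronwall's lemma then bounds $E$ on every finite interval $[0,T]$, which controls $\|u'(t)\|$ and $\|A^{\frac12}u(t)\|$, hence $\|v(t)\|=\|u'(t)\|^{l+1}$ and, by the equivalence of $\|A^{\frac12}\cdot\|$ with $\|\cdot\|$, also $\|u(t)\|$. Thus $(u,v)$ stays in a bounded subset of $H\times H$ on $[0,T_{\max})$, so the blow-up alternative forces $T_{\max}=+\infty$, producing the desired global solution. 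The main obstacle is not any single estimate but the interplay of the non-Lipschitz nonlinearity $\Phi^{-1}$ (which restricts us to Peano existence) with the need to differentiate the kinetic energy of a only continuous velocity; once the identity $E'=-(g(u'),u')$ is secured, global continuation is routine.
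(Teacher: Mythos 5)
Your proposal is correct, but it follows a genuinely different route from the paper. The paper never changes unknowns: it regularizes the degenerate coefficient, replacing $\|u'\|^{l}$ by $(\varepsilon+\|u_\varepsilon'\|^2)^{l/2}$, solves the resulting non-degenerate ODE classically (its right-hand side is locally Lipschitz once one solves for $u_\varepsilon''$), derives energy bounds uniform in $\varepsilon$ via \eqref{S1} and Gronwall, proves uniform local H\"older equicontinuity of $u_\varepsilon'$ by applying Corollary \ref{hold} to the uniformly Lipschitz functions $(\varepsilon+\|u_\varepsilon'\|^2)^{l/2}u_\varepsilon'$, and passes to the limit by Ascoli in the integrated equation. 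You instead absorb the degeneracy into the change of unknown $v=\Phi(u')=\|u'\|^{l}u'$, which is a homeomorphism of the finite-dimensional $H$, and invoke Cauchy--Peano on the resulting first-order system with merely continuous right-hand side; globality then follows from the energy identity, \eqref{S1}, Gronwall, and the continuation alternative. Your key differentiability step --- that $t\mapsto\|v(t)\|^{(l+2)/(l+1)}$ is $C^1$ with derivative $\frac{l+2}{l+1}(v',u')$ even at zeros of $u'$, because the exponent exceeds $1$ --- is exactly the content of the paper's Lemma \ref{LAM} (its Case 2 uses the same order estimate $\|v(t)\|^{l+2}=O(|t-t_0|^{(l+2)/(l+1)})$), so you are in effect proving Theorem \ref{Energ} for your constructed solution as part of the existence argument, where the paper separates the two. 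What each approach buys: yours is shorter, needs only continuity of $g$ rather than local Lipschitz continuity for the existence part, and avoids the compactness machinery; the paper's vanishing-viscosity scheme produces $C^2$ approximations and is the pattern one would attempt for infinite-dimensional analogues (cf.\ the remark about the Kirchhoff equation, where Peano is unavailable), and its Section 2 inequalities are reused later for uniqueness. Two points you should make explicit for completeness: first, that any local Peano solution extends to a noncontinuable one and that the bounded-implies-continuable alternative holds without uniqueness (standard, but it is the hinge of your globality argument); second, that a $C^1$ solution $(u,v)$ of your system really lies in the stated class, i.e.\ $\Phi(u')=\Phi(\Phi^{-1}(v))=v\in\mathcal{C}^1$, which is immediate since $\Phi$ is a bijection.
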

\begin{proof}
To show the existence of the solution for (\ref{1}), we consider the auxiliary problem
{\small\begin{equation}\label{H1}\left\{
\begin{array}{ll}
(\varepsilon + \|u_\varepsilon '\|^2)^{l/2} u_\varepsilon'' +l(u_\varepsilon',u_\varepsilon'')(\varepsilon + \|u_\varepsilon'\|^2)^{l/2 -1} u_\varepsilon'
+\|A^{\frac{1}{2}}u_\varepsilon\|^\beta Au_\varepsilon+g(u_\varepsilon')=0,\\
u_\varepsilon(0)=u_0,\,\,\,\, u'_\varepsilon(0)=u_1.
\end{array} \right.\end{equation}} Here, $\varepsilon > 0$ is a small parameter, devoted to tend to zero. For simplicity in the sequel we shall write $$ l/2: = m>0.$$
Assuming the existence of such a solution $u_\varepsilon$,
multiplying (\ref{H1}) by $u'_\varepsilon$, we find
$$
[(\varepsilon + \|u_\varepsilon'\|^2)^{m}+2m (\varepsilon + \|u_\varepsilon'\|^2)^{m-1} \|u_\varepsilon'\|^2] (u_\varepsilon',u_\varepsilon'')
+\|A^{\frac{1}{2}}u_\varepsilon\|^\beta (Au_\varepsilon,u_\varepsilon')+(g(u_\varepsilon'),u_\varepsilon')=0,
$$
then
\begin{equation}\label{XE1}
 (u_\varepsilon',u_\varepsilon'')=
-\Frac{\|A^{\frac{1}{2}}u_\varepsilon\|^\beta (Au_\varepsilon,u_\varepsilon')+(g(u_\varepsilon'),u_\varepsilon')}
{(\varepsilon + \|u_\varepsilon'\|^2)^{m-1}(\varepsilon + (l+1)\|u_\varepsilon'\|^2) }.
\end{equation}
From (\ref{H1}), we obtain that $u_\varepsilon$ is a solution of
{\small\begin{equation}\label{HH1}\left\{
\begin{array}{ll}
(\varepsilon + \|u_\varepsilon'\|^2)^{m} u_\varepsilon''-l\Frac{\|A^{\frac{1}{2}}u_\varepsilon\|^\beta (Au_\varepsilon,u_\varepsilon')+(g(u_\varepsilon'),u_\varepsilon')}
{\varepsilon +(l+1)\|u_\varepsilon'\|^{2}}u_\varepsilon'
+\|A^{\frac{1}{2}}u_\varepsilon\|^\beta Au_\varepsilon+g(u_\varepsilon')=0,\\
u_\varepsilon(0)=u_0,\,\,\,\, u'_\varepsilon(0)=u_1.
\end{array} \right.\end{equation}}
Conversely,  (\ref{HH1}) implies (\ref{XE1}).
Then, replacing  (\ref{XE1}) in (\ref{HH1}), we obtain (\ref{H1}). Therefore (\ref{H1}) is equivalent to (\ref{HH1}).
\begin{itemize}
 \item[i)] A priori estimates:\\\\
 Next, we introduce
 \begin{eqnarray*}
F_\varepsilon(u_\varepsilon,u_\varepsilon')&=&l\Frac{\|A^{\frac{1}{2}}u_\varepsilon\|^\beta (Au_\varepsilon,u_\varepsilon')+(g(u_\varepsilon'),u_\varepsilon')}
{{ (\varepsilon + \|u_\varepsilon'\|^2)^{m}}(\varepsilon +(l+1)\|u_\varepsilon'\|^{2})}u_\varepsilon'
-\Frac{\|A^{\frac{1}{2}}u_\varepsilon\|^\beta Au_\varepsilon+g(u_\varepsilon')}{{ (\varepsilon + \|u_\varepsilon'\|^2)^{m}}}.
\end{eqnarray*}
Then (\ref{HH1}), can be rewritten as
{\small\begin{equation}\label{HWW2}\left\{
\begin{array}{ll}
u_\varepsilon''
=F_\varepsilon(u_\varepsilon,u_\varepsilon'),\\
u_\varepsilon(0) =u_0,\,\,\,\, u'_\varepsilon(0) =u_1.
\end{array} \right.\end{equation}}
Since the vector field $F_\varepsilon$ is locally Lipschitz continuous, the existence and uniqueness of $u_\varepsilon$ in the class ${\mathcal C}^2([0,T),H)$, for some $T> 0$ is classical. Multiplying (\ref{H1}) by $u'_\varepsilon$, we obtain by a simple calculation the following energy identity:
{\small\begin{equation*}
 \begin{split}
 \frac{d}{dt}E_\varepsilon(t)
 +(g(u_\varepsilon'(t)),u_\varepsilon'(t)) = 0,
 \end{split}
\end{equation*}}
where
 $$E_\varepsilon(t)
 =\frac{l+1}{l+2}(\varepsilon + \|u_\varepsilon'\|^2)^{m+1}- \varepsilon(\varepsilon + \|u_\varepsilon'\|^2)^{m}+\frac{1}{\beta+2}\|A^{\frac{1}{2}}u_\varepsilon(t)\|^{\beta+2}.
 $$ Indeed, for any function $ v\in \mathcal C ^1([0,T),H)$ we have the following sequence of identities
 $$ ( ((\varepsilon + \| v \|^2)^m v)', v) = 2m(\varepsilon + \| v \|^2)^{m-1} \| v \|^2 (v, v') +(\varepsilon 
 + \| v \|^2)^m  (v, v') $$
 $$ =  2m(\varepsilon + \| v \|^2)^{m-1} (\varepsilon+\| v \|^2) (v, v')  - 2m\varepsilon (\varepsilon + \| v \|^2)^{m-1} (v, v') + (\varepsilon 
 + \| v \|^2)^m  (v, v') $$
 $$ =  (2m+1)(\varepsilon 
 + \| v \|^2)^m  (v, v')  - 2m\varepsilon (\varepsilon + \| v \|^2)^{m-1} (v, v') $$
 $$ =\frac{d}{dt}\left[\frac{l+1}{l+2}(\varepsilon + \|{v}\|^2)^{m+1}- \varepsilon(\varepsilon + \|{v}\|^2)^{m}\right]. $$ Moreover for some constant $C>0$ independent of $ \varepsilon$, we have
 $$E_\varepsilon(t) +C
 \ge \frac{l+1}{l+2}(\varepsilon + \|u_\varepsilon'\|^2)^{m+1}- \varepsilon(\varepsilon + \|u_\varepsilon'\|^2)^{m}+C \ge \frac{1}{4}\|u_\varepsilon'(t)\|^{l+2}$$
and then as  a consequence of (\ref{S1}) 
{\small\begin{equation*}
 \begin{split}
\frac{d}{dt}{E_\varepsilon(t)}=-(g(u_\varepsilon'(t)),u_\varepsilon'(t))&\leq k_1+k_2\|u_\varepsilon'(t)\|^{l+2}\\&\leq k_3+k_4{E_\varepsilon(t)}.
 \end{split}
\end{equation*}}By Gronwall's inequality, this  implies
\begin{equation}\label{H4}
\forall t \in [0,T), \quad \|u_\varepsilon(t)\|\leq M_1,\,\,\,\|u'_\varepsilon(t)\|\leq M_2,
\end{equation}
 for some constants $M_1, M_2$ independent of $\varepsilon$. Hence,
$u_\varepsilon$ and $u'_\varepsilon$ are uniformly bounded and $u_\varepsilon$ is a global solution, in particular $T>0$  can be taken arbitrarily large.
\item[ii) ]
 Passage to the limit:\\\\
 In order to pass to the limit as $\varepsilon \rightarrow 0$ we need to know that $u_\varepsilon$ and  $u'_\varepsilon$ are uniformly equicontinous on $(0,T)$ for any $T> 0$. For $u_\varepsilon$ it is clear, since $\|u'_\varepsilon\|$ is bounded.\\
 Moreover we  have $$(\varepsilon + \|u_\varepsilon '\|^2)^{l/2} u_\varepsilon'' +l(u_\varepsilon',u_\varepsilon'')(\varepsilon + \|u_\varepsilon'\|^2)^{l/2 -1} u_\varepsilon'=  ((\varepsilon + \|u_\varepsilon '\|^2)^{l/2} u_\varepsilon')',$$ hence by the equation $$ \|((\varepsilon + \|u_\varepsilon '\|^2)^{l/2} u_\varepsilon')'\|\leq \|A^{\frac{1}{2}}u_\varepsilon(t)\|^{\beta}\|Au_\varepsilon(t)\|+\|g(u_\varepsilon'(t))\|,
$$
and since $g$ is locally Lipschitz continuous, hence bounded on  bounded sets, we obtain 
$$ \|((\varepsilon + \|u_\varepsilon '\|^2)^{l/2} u_\varepsilon')'\|\leq M_3.$$
 Therefore the functions $(\varepsilon + \|u_\varepsilon '\|^2)^{m} u_\varepsilon'$ are uniformly Lipschitz continuous on $\mathbb{R}+$.  We claim that  $u'_\varepsilon$ is  a uniformly (with respect to $\varepsilon$) locally  H\"older continuous function of $t$.  Indeed by applying Corollary \ref{hold} with $f(s) = (\varepsilon+ s^2)^{m}$  and $p= 2m = l$ we find for some $C>0$
 $$\|u_{\varepsilon}'(t_1)-u_{\varepsilon}'(t_2)\|\leq C\|(\varepsilon + \|u_\varepsilon '\|^2)^{m} u_\varepsilon' (t_1)-(\varepsilon + \|u_\varepsilon '\|^2)^{m} u_\varepsilon'(t_2)\|^{\frac{1}{l+1}}\leq C'\|t_1-t_2\|^{\frac{1}{l+1}}.$$
As a consequence of Ascoli's theorem and a priori estimate
(\ref{H4}) combined with (\ref{HWW2}), we may extract a subsequence which is still denoted for simplicity by
$(u_\varepsilon)$ such that for every $T> 0$
$$
u_\varepsilon \rightarrow u\,\,\,\,\,\mbox{in}\,\,\, {\mathcal C}^1((0,T),H),
$$
as $\varepsilon$ tends to $0$. Integrating (\ref{H1}) over $(0,t)$, we get
 \begin{eqnarray}\label{li11}
  (\varepsilon + \|u_\varepsilon '\|^2)^{m} u_\varepsilon' (t)&-&(\varepsilon + \|u_\varepsilon '\|^2)^{m} u_\varepsilon' (t)(0)\\ &=&
 -\Int_0^t\|A^{\frac{1}{2}}u_\varepsilon(s)\|^\beta A u_\varepsilon(s)\,ds-
 \Int_0^tg(u'_\varepsilon(s))\,ds.
 \nonumber
 \end{eqnarray}
From (\ref{li11}), we then have, as $\varepsilon$ tends to $0$
$$
(\varepsilon + \|u_\varepsilon '\|^2)^{m} u_\varepsilon' (t) \rightarrow -\Int_0^t\|A^{\frac{1}{2}}u(s)\|^\beta A u(s)\,ds
-\Int_0^tg( u'(s))\,ds
+\|u'(0)\|^{l}u'(0)
\,\,\,\,\,\mbox{in}\,\,\, {\mathcal C}^0((0,T),H).
$$
Hence \begin{equation}\label{ABS}
\|u'\|^{l}u'= -\Int_0^t\|A^{\frac{1}{2}}u(s)\|^\beta A u(s)\,ds-\Int_0^tg( u'(s))\,ds
+\|u'(0)\|^{l}u'(0),
\end{equation}
and $\|u'\|^{l}u' \in {\mathcal C}^1((0,T),H)$. Finally by differentiating (\ref{ABS}) we conclude that $u$ is a solution of (\ref{1}).
\end{itemize}
\end{proof}
\begin{remark}
It is not difficult to see that the solution $u$ constructed in the existence theorem satisfies the energy identity  $\frac{d}{dt}E(t)=-(g(u'(t)),u'(t))$. The following stronger result shows that this identity is true  for any solution even if uniqueness is not known. For infinite dimensional equations such as the Kirchhoff equation, both uniqueness and the energy identity for general weak solutions  are old open problems. 
\end{remark}

\begin{theorem}\label{Energ}
Let $(u_0,u_1)\in H\times H$.
Then any solution $u$ of { (\ref{1}) such that}
$$u\in {\mathcal C}^1(\mathbb{R}^+,H),\,\,\,\,\,\,\|u'\|^{l}u'\in {\mathcal C}^1(\mathbb{R}^+,H)\,\,\,\,\
\mbox{and}\,\,\,\, u(0)= u_0,\,\,\ u'(0)= u_1,$$ satisfies the formula 
\begin{equation}\label{V1}
\frac{d}{dt}E(t)=-(g(u'(t)),u'(t)).
\end{equation}
\end{theorem}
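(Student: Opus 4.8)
The plan is to differentiate $E(t)$ directly, the delicate point being that the kinetic term $\frac{l+1}{l+2}\|u'\|^{l+2}$ is built from $u'$, which is only continuous, and $u''$ need not exist at the instants where $u'$ vanishes. To bypass this I would set $w := \|u'\|^{l}u'$, which by hypothesis lies in $\mathcal{C}^1(\mathbb{R}^+,H)$, and exploit the algebraic identity $\|w\| = \|u'\|^{l+1}$. This gives $\|u'\|^{l+2} = \|w\|^{\frac{l+2}{l+1}}$, so the kinetic term becomes a function of the $\mathcal{C}^1$ curve $w$ alone, with exponent $r := \frac{l+2}{l+1} > 1$, and I never have to differentiate $u'$ itself.

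The key step is an elementary lemma: for any $w \in \mathcal{C}^1(\mathbb{R}^+,H)$ and any $r > 1$, the scalar function $t \mapsto \|w(t)\|^r$ is continuously differentiable. Away from the zeros of $w$ this is routine, since writing $\|w\|^r = (w,w)^{r/2}$ and applying the chain rule gives $\frac{d}{dt}\|w\|^r = r\|w\|^{r-2}(w,w')$. At an instant $t_0$ with $w(t_0)=0$, I would use the mean value bound $\|w(t)\| = \|w(t)-w(t_0)\| \le L|t-t_0|$ for $t$ near $t_0$, whence $\|w(t)\|^r/|t-t_0| \le L^r|t-t_0|^{r-1} \to 0$; this shows the derivative exists and equals $0$ there, and the estimate $r\|w\|^{r-1}\|w'\| \to 0$ shows the derivative is continuous at $t_0$. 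Substituting $r = \frac{l+2}{l+1}$, $\|w\| = \|u'\|^{l+1}$ and $w = \|u'\|^{l}u'$ into the formula valid on $\{w\neq 0\}$ yields $\frac{d}{dt}\|u'\|^{l+2} = \frac{l+2}{l+1}(w',u')$; and since both sides vanish on $\{u'=0\}$ (there $w=0$ and $u'=0$), the identity $\frac{d}{dt}\big[\frac{l+1}{l+2}\|u'\|^{l+2}\big] = (w',u')$ holds on all of $\mathbb{R}^+$.

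For the potential term I would argue identically. Since $A$ is symmetric and, $H$ being finite-dimensional, bounded, the map $\|A^{\frac{1}{2}}u\|^2 = (Au,u)$ is $\mathcal{C}^1$ with derivative $2(Au,u')$, and the exponent $\frac{\beta+2}{2}>1$ lets the same lemma apply to $t \mapsto \|A^{\frac{1}{2}}u\|^{\beta+2} = (Au,u)^{(\beta+2)/2}$, giving $\frac{d}{dt}\big[\frac{1}{\beta+2}\|A^{\frac{1}{2}}u\|^{\beta+2}\big] = \|A^{\frac{1}{2}}u\|^{\beta}(Au,u')$ (both sides vanishing where $u=0$, by coercivity). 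Adding the two contributions shows $E \in \mathcal{C}^1(\mathbb{R}^+)$ with $\frac{d}{dt}E(t) = (w',u') + \|A^{\frac{1}{2}}u\|^{\beta}(Au,u')$. Finally I would take the inner product of equation (\ref{1}), written as $w' + \|A^{\frac{1}{2}}u\|^{\beta}Au + g(u') = 0$, with $u'$, and substitute $(w',u') = -\|A^{\frac{1}{2}}u\|^{\beta}(Au,u') - (g(u'),u')$; the two potential contributions cancel and (\ref{V1}) follows. The entire difficulty is concentrated at the zeros of $u'$, and it is precisely the differentiability lemma for $\|w\|^r$ with $r>1$ that makes the \emph{formal} computation with $u''$ rigorous without ever invoking $u''$.
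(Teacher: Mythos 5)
Your proof is correct, and it follows the same overall strategy as the paper: the whole difficulty sits at the zeros of $u'$, it is resolved by bounding $\|u'\|^{l+2}$ near such a zero $t_0$ by a power of $|t-t_0|$ strictly larger than $1$ (exploiting the $\mathcal{C}^1$ regularity of $\|u'\|^{l}u'$), and the conclusion comes from taking the inner product of \eqref{1} with $u'$. The packaging of the key lemma is, however, genuinely different, and the difference is worth noting. The paper's Lemma \ref{LAM} works with $v=u'$ itself: where $v(t_0)\neq 0$ it differentiates $v$ directly --- which tacitly uses that $v=\|\psi\|^{-l/(l+1)}\psi$ is $\mathcal{C}^1$ wherever $\psi:=\|v\|^{l}v\neq 0$, i.e.\ the smooth invertibility of $x\mapsto\|x\|^{l}x$ away from the origin --- and where $v(t_0)=0$ it uses exactly your Lipschitz estimate $\|v\|^{l+2}\leq C|t-t_0|^{1+\frac{1}{l+1}}$. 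You instead prove the more general, self-contained statement that $\|w\|^{r}\in\mathcal{C}^1$ for every $\mathcal{C}^1$ curve $w$ and every $r>1$, apply it to $w=\|u'\|^{l}u'$ with $r=\frac{l+2}{l+1}$, and recover the paper's identity $\frac{d}{dt}\big[\frac{l+1}{l+2}\|u'(t)\|^{l+2}\big]=((\|u'\|^{l}u')',u')$ by algebra alone (the exponent $(l+1)(r-2)+l$ collapses to $0$). What your route buys: $u'$ is never differentiated anywhere, so the inverse-map fact the paper leaves implicit is not needed, and the same lemma treats the potential term uniformly (apply it to $W=A^{\frac{1}{2}}u$ with $r=\beta+2$). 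What the paper's route buys: its lemma outputs directly the inner-product identity used in the theorem, with no substitution step. In substance both arguments rest on the same estimate at the zeros of $u'$; yours is a mild but genuine streamlining.
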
 The proof of this new result relies on the following simple lemma
\begin{lemma}\label{LAM}
Let  $J$ be any interval, assume that   $v\in {\mathcal C}(J,H)$ and $\|v\|^{l}v\in {\mathcal C}^1(J,H)$ then
$$
\|v(t)\|^{l+2}\in {\mathcal C}^1(J,H),
$$
and
$$
((\|v(t)\|^{l}v(t))',v(t))=\frac{d}{dt}\Big(\frac{l+1}{l+2}\|v(t)\|^{l+2}\Big),\,\,\,\,\,\,\forall t\in J.
$$
\end{lemma}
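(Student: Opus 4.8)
The plan is to reduce everything to the single curve $w(t) := \|v(t)\|^{l}v(t)$, which belongs to $\mathcal{C}^1(J,H)$ by hypothesis. The key point is that, although $v$ itself is only continuous, both quantities in the statement can be written through $w$ alone. Taking norms and inner products gives the pointwise identities $\|w\| = \|v\|^{l+1}$ and $(w,v) = \|v\|^{l+2}$, so that
\[
\|v\|^{l+2} = \|w\|^{\frac{l+2}{l+1}}.
\]
First I would record this reduction: it replaces the product $(w,v)=\|v\|^{l+2}$, in which one factor is merely continuous, by the composition of the $\mathcal{C}^1$ curve $w$ with a fixed scalar function on $H$, thereby avoiding any appeal to the product rule for the non-differentiable factor $v$.

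Next I would prove that $N : H \to \mathbb{R}$, $N(y) := \|y\|^{s}$ with $s := \frac{l+2}{l+1} > 1$, is of class $\mathcal{C}^1$ on all of $H$, with
\[
\nabla N(y) = s\|y\|^{s-2}y \quad (y \neq 0), \qquad \nabla N(0) = 0.
\]
Away from the origin this is immediate by writing $\|y\|^{s} = (y,y)^{s/2}$. The only delicate point — and the main obstacle — is the behaviour at $y=0$: differentiability there follows from $|N(y)-N(0)| = \|y\|^{s} = o(\|y\|)$ as $y\to 0$ (because $s>1$), and continuity of $\nabla N$ at $0$ follows from $\|\nabla N(y)\| = s\|y\|^{s-1}\to 0$ (because $s-1 = \frac{1}{l+1}>0$). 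Since $\|v\|^{l+2} = N(w)$ is the composition of $N \in \mathcal{C}^1(H,\mathbb{R})$ with $w \in \mathcal{C}^1(J,H)$, the chain rule yields at once that $\|v\|^{l+2} \in \mathcal{C}^1(J,\mathbb{R})$ — the first assertion — together with
\[
\frac{d}{dt}\|v\|^{l+2} = \big(\nabla N(w),\, w'\big).
\]

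Finally I would identify the gradient term. For $v(t)\neq 0$ one has $\|w\|^{s-2} = \|v\|^{(l+1)(s-2)} = \|v\|^{-l}$, hence
\[
\nabla N(w) = s\,\|w\|^{s-2}w = \tfrac{l+2}{l+1}\,\|v\|^{-l}\,\|v\|^{l}v = \tfrac{l+2}{l+1}\,v.
\]
When $v(t)=0$ (equivalently $w(t)=0$) both $\nabla N(w) = 0$ and $v=0$, so the identity $\nabla N(w) = \tfrac{l+2}{l+1}v$ in fact holds at every $t\in J$. Substituting and using $\frac{l+1}{l+2} = 1/s$ gives
\[
\frac{d}{dt}\Big(\tfrac{l+1}{l+2}\|v\|^{l+2}\Big) = (v, w') = \big((\|v\|^{l}v)', v\big),
\]
which is precisely the claimed formula. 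The whole argument hinges on the $\mathcal{C}^1$-regularity of $N$ at the origin; everywhere else the computation is routine.
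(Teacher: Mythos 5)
Your proof is correct, but it takes a genuinely different route from the paper's. The paper argues pointwise with a case distinction: at a point $t_0$ with $v(t_0)\neq 0$ it uses that $v$ is locally $\mathcal{C}^1$ (implicitly, by inverting $v\mapsto\|v\|^{l}v$ away from the origin) and computes both sides via the product rule; at a point with $v(t_0)=0$ it uses $\|v(t)\|^{l}v(t)=O(|t-t_0|)$, hence $\|v(t)\|^{l+2}\leq C^{\frac{l+2}{l+1}}|t-t_0|^{1+\frac{1}{l+1}}$, so that both sides of the identity vanish at $t_0$. You instead funnel everything through the single $\mathcal{C}^1$ curve $w=\|v\|^{l}v$ and the fixed scalar map $N(y)=\|y\|^{\frac{l+2}{l+1}}$, prove once and for all that $N\in\mathcal{C}^1(H,\mathbb{R})$ (the exponent $\frac{l+2}{l+1}>1$ is exactly what makes $N$ differentiable at the origin with vanishing gradient, and $\frac{1}{l+1}>0$ is what makes the gradient continuous there), and then conclude by the chain rule together with the identity $\nabla N(w)=\frac{l+2}{l+1}v$, which you correctly verify both where $v\neq 0$ (using $(l+1)\bigl(\frac{l+2}{l+1}-2\bigr)=-l$) and where $v=0$. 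Your route buys three things: the degenerate points $v(t)=0$ are handled once, at the level of the fixed function $N$, instead of by a separate pointwise estimate along the curve; you never need the paper's unstated fact that $v$ is $\mathcal{C}^1$ wherever it does not vanish; and the continuity of the derivative --- the actual $\mathcal{C}^1$ assertion, which in the paper's proof requires the additional (implicit) remark that the derivative computed in both cases equals the continuous function $\frac{l+2}{l+1}(w',v)$ --- comes for free from the chain rule. What the paper's route buys is elementarity: it uses only one-variable calculus and never invokes Fr\'echet differentiability of a map on $H$. Both arguments are valid in an arbitrary real Hilbert space.
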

\begin{proof}
{\bf{Case 1.}} For any $t_0 \in J$, if $v(t_0)\neq 0$ then $v(t)\neq 0$ in the neighborhood of $t_0$ and in this neighborhood we have $v\in {\mathcal C}^1(J,H)$  with
{\small\begin{equation}\label{hh1hm}
\begin{split}
\frac{d}{dt}\left(\|v(t)\|^{l+2}\right)&=\frac{d}{dt}\left(\|v(t)\|^{2}\right)^{{ l/2}+1}\\&=
\left({\frac{l}{2}}+1\right)(\|v(t)\|^{2})^{{ l/2}}\frac{d}{dt}\left(\|v(t)\|^{2}\right)\\&
=2\left({\frac{l}{2}}+1\right)\|v(t)\|^{l}(v(t),v'(t)).
\end{split}
\end{equation}}
On the other hand, we find
{\small\begin{equation}\label{hh1hh}
\begin{split}
((\|v(t)\|^{l}v(t))',v(t))&=(\|v(t)\|^{l}v'(t),v(t))
+(\|v(t)\|^{l})'(v(t),v(t))\\&=
\|v(t)\|^{l}(v'(t),v(t))+l\|v(t)\|^{l}(v'(t),v(t))\\&=(l+1)\|v(t)\|^{l}(v'(t),v(t)),
\end{split}
\end{equation}}
and from (\ref{hh1hm})-(\ref{hh1hh}), we obtain
\begin{equation*}
((\|v(t)\|^{l}v(t))',v(t))=
\frac{d}{dt}\Big[\frac{l+1}{l+2}\|v(t)\|^{l+2}\Big],\,\,\,\,\, \mbox{for}\,\,\,\, t=t_0.
\end{equation*}
{\bf{Case 2.}} If $v(t_0)= 0$ then since $\|v(t)\|^{l}v(t)\in {\mathcal C}^1(J,H)$ , we have clearly 
$$((\|v\|^{l}v)'(t_0),v(t_0))= 0.
$$ Moreover in the neighborhood of $t_0$ we have
$$\|v(t)\|^{l}v(t) =  0 (|t-t_0|), $$ in other terms $$
\|v(t)\|^{l+1} \leq C |t-t_0|,
$$
therefore
$$
\|v(t)\|^{l+2} \leq C^{\frac{l+2}{l+1}}|t-t_0|^{1+\frac{1}{l+1}},
$$
then
$$
 (\|v(t)\|^{l+2})'=0,\,\,\,\, \mbox{for}\,\,\,\, t=t_0,
$$
and finally \begin{equation*}
((\|v(t)\|^{l}v(t))',v(t))=
\frac{d}{dt}\Big[\frac{l+1}{l+2}\|v(t)\|^{l+2}\Big] = 0\,\,\,\,\, \mbox{for}\,\,\,\, t=t_0.
\end{equation*}
\end{proof}
We now give the proof of Theorem {\ref{Energ}}.

\begin{proof} Setting  $v=u'$, from Lemma \ref{LAM}, we deduce
\begin{equation*}
((\|u'(t)\|^{l}u'(t))',u'(t))=
\frac{d}{dt}\Big[\frac{l+1}{l+2}\|u'(t)\|^{l+2}\Big],\,\,\,\,\,\,\forall t  \in J.
\end{equation*}

By multiplying equation (\ref{1}) by $u'$, we obtain easily
$$
\frac{d}{dt}E(t)=-(g(u'(t)),u'(t)).
$$ \end{proof}

\section{Uniqueness of solution for $(u_0,u_1)$ given}
In this section we suppose that
\begin{equation}\label{SS1}
\forall R\in \R^+,  \forall (u,v)\in B_R\times B_R,\,\,\,\,\,\,
\|g(u)-g(v)\|\leq k_3(R)\|u-v\|,
\end{equation}
for some positive constant $k_3(R)$.

\begin{theorem}\label{Uniq}
Let $(u_0,u_1)\in H\times H$, $J$ an interval of  $\mathbb{R}$ and $t_0\in J$.
Then (\ref{1}) has at most one solution 
$$u\in {\mathcal C}^1(J,H),\,\,\,\,\,\,\|u'\|^{l}u'\in {\mathcal C}^1(J,H)\,\,\,\,\
\mbox{and}\,\,\,\, u(t_0)= u_0,\,\,\ u'(t_0)= u_1.$$
\end{theorem}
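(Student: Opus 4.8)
The plan is to argue by the energy method applied to the difference of two putative solutions. Suppose $u$ and $w$ both satisfy (\ref{1}) on $J$ with $u(t_0)=w(t_0)=u_0$ and $u'(t_0)=w'(t_0)=u_1$, and set $z=u-w$, $P=\|u'\|^{l}u'$, $Q=\|w'\|^{l}w'$. By hypothesis $P,Q\in\mathcal C^1(J,H)$ and $P(t_0)=Q(t_0)$. Since uniqueness is a local property that can be propagated, I would first fix a bounded closed subinterval $I=[t_0,t_0+\tau]\subset J$ (the argument to the left of $t_0$ being identical after $t\mapsto -t$); by continuity there is $R>0$ with $\|u\|,\|w\|,\|u'\|,\|w'\|\le R$ on $I$, which fixes the Lipschitz constant $k_3(R)$ in (\ref{SS1}) and, since $H$ is finite dimensional so that $A$ is bounded, the constant $D$ of Lemma \ref{LEM12}.

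Next I would assemble the structural ingredients of Section $2$, applied with $f(s)=s^{l}$, $p=l$, $c=1$. First, the monotonicity of $v\mapsto\|v\|^{l}v$ (the Propositions of Section $2$) gives $(P-Q,u'-w')\ge\delta\|u'-w'\|^{l+2}\ge0$, as well as $(P-Q,u'-w')\ge\tfrac12(\|u'\|^{l}+\|w'\|^{l})\|u'-w'\|^{2}$. Second, Lemma \ref{LEM12} controls the stiffness term, $\|\,\|A^{1/2}u\|^{\beta}Au-\|A^{1/2}w\|^{\beta}Aw\,\|\le DR^{\beta}\|z\|$, and the substitution $U=A^{1/2}u$, $W=A^{1/2}w$ also shows $(\|A^{1/2}u\|^{\beta}Au-\|A^{1/2}w\|^{\beta}Aw,\,z)\ge0$ by monotonicity of $v\mapsto\|v\|^{\beta}v$. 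Third, (\ref{SS1}) gives $\|g(u')-g(w')\|\le k_3(R)\|u'-w'\|$. The observation underlying the whole scheme is that $\|v\|^{l}v=\nabla\big(\tfrac1{l+2}\|v\|^{l+2}\big)$ is a monotone gradient field; this is exactly what makes Lemma \ref{LAM} work, and it is what gives the cross term in $\frac{d}{dt}(P-Q,z)$ the correct sign.

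The core computation integrates the subtracted equation. Subtracting the two integrated forms (as in (\ref{ABS})), pairing with $z'=u'-w'$, and integrating by parts using $P(t_0)=Q(t_0)$ and $z(t_0)=0$ yields
$$\int_{t_0}^{t}(P-Q,z')\,ds=(P(t)-Q(t),z(t))+\int_{t_0}^{t}(\|A^{1/2}u\|^{\beta}Au-\|A^{1/2}w\|^{\beta}Aw,\,z)\,ds+\int_{t_0}^{t}(g(u')-g(w'),z)\,ds.$$
Bounding the left side below by $\delta\int_{t_0}^t\|z'\|^{l+2}$ and the right side above by the three ingredients together with $\|z(t)\|\le\int_{t_0}^t\|z'\|$ and $\|P(t)-Q(t)\|\le DR^\beta\int_{t_0}^t\|z\|+k_3(R)\int_{t_0}^t\|z'\|$, one reaches
$$\delta\int_{t_0}^{t}\|z'\|^{l+2}\,ds\le \|P(t)-Q(t)\|\,\|z(t)\|+DR^{\beta}\int_{t_0}^{t}\|z\|^{2}\,ds+k_3(R)\int_{t_0}^{t}\|z'\|\,\|z\|\,ds,$$
which after these substitutions has the shape $\int_{t_0}^t\|z'\|^{l+2}\le C\big(\int_{t_0}^t\|z'\|\big)^2$ on $I$. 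This is the inequality I would exploit, together with the Hölder estimate $\int\|z'\|\le\tau^{(l+1)/(l+2)}\big(\int\|z'\|^{l+2}\big)^{1/(l+2)}$ and a continuation argument, to force $z\equiv0$ and $z'\equiv0$ on $I$, after which the conclusion propagates along $J$ on both sides of $t_0$.

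The main obstacle is precisely the degeneracy of $v\mapsto\|v\|^{l}v$: its inverse is only H\"older continuous of exponent $\tfrac1{l+1}<1$, so if one used the naive bound $\|u'-w'\|\le C\|P-Q\|^{1/(l+1)}$ of Corollary \ref{hold} directly, the energy estimate would collapse into a Bihari-type inequality $\Psi'\lesssim\Psi^{1/(l+1)}$ with $\Psi(t_0)=0$, which does \emph{not} force $\Psi\equiv0$ and would leave uniqueness genuinely in doubt. The whole point is therefore never to trade $\|u'-w'\|$ for a sub-linear power of $\|P-Q\|$, but to retain the full monotonicity $(P-Q,u'-w')\ge\delta\|u'-w'\|^{l+2}$ so that the closing estimate stays strong enough. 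The delicate technical point within this is the behaviour where the velocities vanish, i.e. where the coercive weight $\|u'\|^{l}+\|w'\|^{l}$ degenerates; I expect this to be treated by a case split on $\{u'=0\}$ versus $\{u'\ne0\}$ in the spirit of the two-case analysis of Lemma \ref{LAM}.
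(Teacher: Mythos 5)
Your integration-by-parts identity is correct, and so are the three structural bounds feeding into it, but the closing step fails: the inequality you reach,
$$\delta\int_{t_0}^{t}\|z'\|^{l+2}\,ds\le C\Big(\int_{t_0}^{t}\|z'\|\,ds\Big)^{2},$$
does not force $z'\equiv 0$. Writing $\Psi(t)=\int_{t_0}^{t}\|z'\|^{l+2}\,ds$ and inserting your own H\"older estimate, it becomes $\Psi\le C\,(t-t_0)^{2(l+1)/(l+2)}\,\Psi^{2/(l+2)}$, and since $l>0$ the exponent $2/(l+2)$ is strictly less than $1$. A sublinear inequality $\Psi\le C'\Psi^{\theta}$ with $\theta<1$ only yields the upper bound $\Psi\le (C')^{1/(1-\theta)}$, i.e. $\Psi(t)\le C''\,(t-t_0)^{2(l+1)/l}$; it is satisfied by plenty of non-vanishing functions, and neither iteration (which converges to the same bound) nor continuation (which cannot even get started, since the first interval is not handled) improves it to $\Psi\equiv 0$. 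So your scheme collapses into exactly the Bihari-type obstruction you flagged at the end: trading $\int\|z'\|$ against $\int\|z'\|^{l+2}$ through H\"older is just as lossy as the sublinear inverse bound of Corollary \ref{hold}, because $l+2>2$. The coercivity $(P-Q,z')\ge\delta\|z'\|^{l+2}$ is genuinely degenerate where both velocities are small, and no purely generic energy estimate sees past that degeneracy.

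What the paper supplies, and what is missing from your proposal, is a case analysis on the initial data in which the degeneracy is resolved by three different mechanisms. (a) If $u_1\ne 0$, the equation is non-degenerate near $t_0$: it can be solved for $u''$ with a locally Lipschitz right-hand side, and classical Cauchy--Lipschitz uniqueness applies. (b) If $u_1=0$ but $u_0=a\ne 0$, the equation itself gives $(\|u'\|^{l}u')'(t_0)=-\|A^{1/2}a\|^{\beta}Aa\ne 0$, whence $\|u'(t)\|^{l}\ge\eta\,|t-t_0|^{l/(l+1)}$ for $|t-t_0|$ small; feeding this quantitative non-degeneracy into Corollary \ref{coerc} yields the weighted estimate
$$\|w'(t)\|\le C(T)\,|t-t_0|^{-l/(l+1)}\int_{t_0}^{t}\|w'(\tau)\|\,d\tau,$$
whose singular weight is integrable (because $l/(l+1)<1$), so a Gronwall argument on $[\delta,t]$ followed by $\delta\to 0$ gives $w\equiv 0$. (c) If $u_0=u_1=0$, the energy identity of Theorem \ref{Energ} combined with (\ref{SS1}) gives $|E'|\le k_3 E$, hence $E\equiv 0$ (Proposition \ref{PRO7}). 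Your closing remark about splitting on $\{u'=0\}$ versus $\{u'\ne 0\}$ gestures in the right direction, but the essential missing idea is case (b): one must use the equation itself, not just the monotonicity of $v\mapsto\|v\|^{l}v$, to show that the solution exits the degenerate set at a definite algebraic rate, which is what restores a usable (singular but integrable) Gronwall structure.
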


\begin{remark}
The uniqueness of solutions of (\ref{1})  will be proved under  conditions on the initial data $(u_0,u_1)$. The next proposition concerns the uniqueness result for  $u_1\neq 0$.
\end{remark}
\begin{proposition}
Let $\tau \in \mathbb{R}^+$ and $J=(\tau,T),\,\, T> \tau$. Then there is at most one solution of (\ref{1}) with $u(\tau)=u_0$ and $u'(\tau)=u_1$ for $T-\tau$ small enough such that
$$u\in {\mathcal C}^1(J,H),\,\,\,\,\,\mbox{and}\,\,\,\,\,\,\|u'\|^{l}u'\in {\mathcal C}^1(J,H).$$
\end{proposition}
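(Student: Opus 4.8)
The plan is to remove the degeneracy of the map $v\mapsto\|v\|^{l}v$ at $v=0$ by exploiting the hypothesis $u_1\neq 0$, thereby recasting (\ref{1}) locally as an ODE whose right-hand side is genuinely Lipschitz, and then to close a linear Gronwall estimate on a short interval. Suppose $u,\tilde u$ are two solutions in the stated class with $u(\tau)=\tilde u(\tau)=u_0$ and $u'(\tau)=\tilde u'(\tau)=u_1\neq 0$, and set $w=\|u'\|^{l}u'$, $\tilde w=\|\tilde u'\|^{l}\tilde u'$; both lie in $\mathcal{C}^1(J,H)$ and satisfy $w(\tau)=\tilde w(\tau)=\|u_1\|^{l}u_1=:w_0\neq 0$. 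The map $v\mapsto\|v\|^{l}v$ is a $C^1$ diffeomorphism of $H\setminus\{0\}$ with inverse $\Phi(w)=\|w\|^{-l/(l+1)}w$, so that $u'=\Phi(w)$ and $\tilde u'=\Phi(\tilde w)$.

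The decisive point is that, because $w_0\neq 0$, the inverse $\Phi$ can be used as a \emph{Lipschitz} map rather than merely the Hölder map furnished by Corollary~\ref{hold}. Indeed, fix $\eta\in(0,\|w_0\|/2)$. Since $w$ and $\tilde w$ are continuous at $\tau$ with common value $w_0$, we may shrink $T-\tau$ so that $w(t)$ and $\tilde w(t)$ both remain in the ball of radius $\eta$ about $w_0$ throughout $J$. This ball is convex and contained in $\{\|w\|\ge\|w_0\|/2\}$, where $\Phi$ is $C^1$ with bounded differential; hence $\Phi$ is Lipschitz on it with some constant $L>0$, and therefore $\|u'(t)-\tilde u'(t)\|\le L\,\|w(t)-\tilde w(t)\|$ on $J$. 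It is exactly here that $u_1\neq 0$ enters, and this is the step that fails at the degenerate datum $v=0$.

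To close the estimate, I write (\ref{1}) in the integrated form (\ref{ABS}) based at $\tau$ and subtract the two solutions; the common initial term cancels, giving
\[
w(t)-\tilde w(t)=-\int_\tau^t\big(\|A^{\frac12}u\|^\beta Au-\|A^{\frac12}\tilde u\|^\beta A\tilde u\big)\,ds-\int_\tau^t\big(g(u')-g(\tilde u')\big)\,ds.
\]
On the compact interval the solutions are bounded, say by $R$; since $H$ is finite dimensional, $A$ is bounded, so Lemma~\ref{LEM12} bounds the first integrand by $C\|u-\tilde u\|$, while (\ref{SS1}) bounds the second by $k_3(R)\|u'-\tilde u'\|\le k_3(R)L\|w-\tilde w\|$. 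Combining this with $\|u(t)-\tilde u(t)\|\le L\int_\tau^t\|w-\tilde w\|\,ds$ and setting $N(t)=\sup_{[\tau,t]}\|w-\tilde w\|$, one obtains $N(t)\le\big(CL(t-\tau)^2+k_3(R)L(t-\tau)\big)N(t)$. For $T-\tau$ small enough the bracketed factor is $<1$, forcing $N\equiv 0$, whence $w\equiv\tilde w$ and $u\equiv\tilde u$ on $J$.

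The main obstacle is the second paragraph: one must guarantee that $\Phi$ is applied only along trajectories staying in a fixed neighborhood of $w_0$ bounded away from the origin, so that the Lipschitz (not merely Hölder) bound holds; this is secured precisely by $u_1\neq 0$ together with the shrinking of $T-\tau$. The remaining ingredients—the a priori boundedness of the solutions, the perturbation estimate of Lemma~\ref{LEM12}, the local Lipschitz bound (\ref{SS1}) on $g$, and the Gronwall bookkeeping—are routine.
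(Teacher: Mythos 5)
Your proof is correct (granting, as you do and as the remark preceding the proposition makes explicit, the intended hypothesis $u_1\neq 0$, which the printed statement omits), but it takes a genuinely different route from the paper's. The paper stays at the differential level: since $u'(\tau)\neq 0$, near $\tau$ one may expand $(\|u'\|^{l}u')'=\|u'\|^{l}u''+l\|u'\|^{l-2}(u',u'')u'$, solve for $u''$, and rewrite (\ref{1}) as an explicit system $u''=F(u,u')$ with $F$ locally Lipschitz on $H\times(H\setminus\{0\})$; uniqueness (and local $C^2$ existence) is then quoted from classical Cauchy--Lipschitz theory. You instead work with the integrated equation in the variable $w=\|u'\|^{l}u'$ and exploit that the inverse map $\Phi(w)=\|w\|^{-l/(l+1)}w$ is Lipschitz on a ball around $w_0=\|u_1\|^{l}u_1\neq 0$ --- an upgrade of the H\"older estimate of Corollary~\ref{hold}, and exactly what the hypothesis $u_1\neq 0$ buys --- then close a contraction/Gronwall estimate using Lemma~\ref{LEM12} and (\ref{SS1}). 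Your argument is self-contained (no appeal to Picard--Lindel\"of) and runs parallel to the paper's own proof of Proposition~\ref{Propu} for the complementary case $u_1=0\neq u_0$, where the same integrated-equation strategy appears with the singular weight $t^{-l/(l+1)}$ in place of your Lipschitz constant $L$; seen this way, your proof and the paper's treatment of the degenerate case form a unified method, while the paper's own proof of the present proposition is shorter because it can fall back on standard ODE theory. One caveat, shared equally by the paper's proof: the admissible smallness of $T-\tau$ depends on the pair of solutions being compared (you shrink $J$ so that $w,\tilde w$ stay in $B(w_0,\eta)$; the paper shrinks so that $u'\neq 0$), so what both arguments literally establish is that any two solutions agree on some right-neighbourhood of $\tau$ --- which is how the proposition is used in the sequel, so nothing is lost.
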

\begin{proof}
Since $u'(\tau)\neq 0$, the second derivative $u''(\tau)$ exists and $u''(t)$ also exists for $\tau\leq t< \tau+\varepsilon$ with $\varepsilon$ small enough.\\ On $(T,T+\varepsilon)$, (\ref{1}) reduces to
$$
u''=l\|u'\|^{2({ \frac{l}{2}}-1)}u'\Frac{\|A^{\frac{1}{2}}u\|^\beta (Au,u')+(g(u'),u')}
{(l+1)\|u'\|^{l} \|u'\|^{l}}
-\Frac{\|A^{\frac{1}{2}}u\|^\beta Au+g(u')}{\|u'\|^{l}},
$$
the existence and uniqueness of $u$ in the class ${\mathcal C}^2(J,H)$ for this equation is classical.
 \end{proof}
\begin{proposition}\label{Propu}
Let $a \not= 0$.
Then for $J$ an interval containing $0$ and such that $\vert J\vert$ is small enough, equation \eqref{1} has at most one  solution satisfying
$$u\in {\mathcal C}^1(J,H),\,\,\,\,\,\,\|u'\|^{l}u'\in {\mathcal C}^1(J,H)\,\,\,\,\
\mbox{and}\,\,\,\, u_0=a,\,\,\ u_1=0.$$
\end{proposition}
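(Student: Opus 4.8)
The plan is to compare two solutions $u,\tilde u$ on $J$ sharing the data $u(0)=\tilde u(0)=a$ and $u'(0)=\tilde u'(0)=0$, and to show their difference vanishes. Writing $P(v):=\|v\|^{l}v$ and using the integral identity (\ref{ABS}) for each solution — in which the boundary term $\|u'(0)\|^{l}u'(0)$ now \emph{vanishes} because $u'(0)=0$ — I would subtract the two identities to obtain, for $\Phi(t):=P(u'(t))-P(\tilde u'(t))$,
\[
\Phi(t)=-\int_0^t\big(\|A^{\frac12}u\|^\beta Au-\|A^{\frac12}\tilde u\|^\beta A\tilde u\big)\,ds-\int_0^t\big(g(u')-g(\tilde u')\big)\,ds.
\]
Bounding the first integrand by Lemma \ref{LEM12} (so it is $\le C_1\|u-\tilde u\|$ along the bounded trajectory) and the second by the local Lipschitz hypothesis (\ref{SS1}), and setting $w:=u-\tilde u$, $N(t):=\int_0^t\|w'\|\,ds$ (note $\|w(s)\|\le N(s)$ since $w(0)=0$), this yields the scalar bound $\|\Phi(t)\|\le (C_1t+k_3)\,N(t)$.

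The crux is to turn this into a usable bound on $\|w'\|=\|u'-\tilde u'\|$. Here the degeneracy bites: the inverse of $P$ is only H\"older of exponent $1/(l+1)<1$, so Corollary \ref{hold} alone gives $\|w'\|\le C\|\Phi\|^{1/(l+1)}$ and hence only the \emph{sub-linear} inequality $N'\le C N^{1/(l+1)}$, which cannot force $N\equiv0$ (its comparison ODE admits nonzero solutions through the origin). To beat this I would instead use the coercive lower bound, Corollary \ref{coerc}, in the form $\|w'(t)\|\le 2\|\Phi(t)\|/(\|u'(t)\|^{l}+\|\tilde u'(t)\|^{l})$, together with a lower bound on the velocities. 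This is exactly where $a\neq0$ enters: since $u(s)\to a$ and $u'(s)\to0$ as $s\to0$, the quantity $h_u(s):=\|A^{\frac12}u\|^\beta Au+g(u')$ tends to $h_0:=\|A^{\frac12}a\|^\beta Aa+g(0)$, which is nonzero in the relevant setting (for $g$ with $g(0)=0$ one has $h_0=\|A^{\frac12}a\|^\beta Aa\neq0$ by coercivity). Consequently $\|P(u'(t))\|=\|\int_0^t h_u\|\ge \frac12\|h_0\|\,t$ for small $t$, giving $\|u'(t)\|\ge c_0\,t^{1/(l+1)}$, and likewise for $\tilde u'$.

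Feeding this lower bound into the coercive estimate produces $\|w'(t)\|\le C_2\,t^{-l/(l+1)}N(t)$ on a small interval $(0,\varepsilon)$, i.e. $N'(t)\le C_2\,t^{-l/(l+1)}N(t)$ with $N(0)=0$. The decisive point is that the singular weight is \emph{integrable}, $\int_0^\varepsilon t^{-l/(l+1)}\,dt=(l+1)\varepsilon^{1/(l+1)}<\infty$ (precisely because $l/(l+1)<1$), so this is now a genuine linear Gronwall inequality. Gronwall's lemma forces $N\equiv0$ on $[0,\varepsilon)$, hence $w'\equiv0$ and $w\equiv0$; running the same argument backward in time (using $\|\int_t^0 h_u\|\ge\frac12\|h_0\|\,|t|$) disposes of $t<0$, so $u=\tilde u$ on $J$ once $|J|$ is small. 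Note that this whole scheme works with the integral form and avoids $u''$ altogether, which is what makes it survive the degeneracy at $t=0$.

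I expect the main obstacle to be precisely this passage from the sub-linear to the linear regime, namely establishing the velocity lower bound $\|u'(t)\|\gtrsim t^{1/(l+1)}$, which hinges on $h_0\neq0$; without it one is stuck with the sub-linear Gronwall that does not yield uniqueness. The borderline configuration $h_0=0$ (possible only if $g(0)\neq0$) is genuinely delicate, since there the forcing no longer cures the degeneracy, and I expect it to require either the standing assumption $g(0)=0$ or a separate argument.
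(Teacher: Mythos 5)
Your proposal is correct and follows essentially the same route as the paper's own proof: both derive the velocity lower bound $\|u'(t)\|\gtrsim t^{1/(l+1)}$ from the non-vanishing of $(\|u'\|^{l}u')'(0)$, then combine Corollary \ref{coerc}, Lemma \ref{LEM12} and (\ref{SS1}) to reach the singular but integrable Gronwall inequality $\|w'(t)\|\le C\, t^{-l/(l+1)}\int_0^t\|w'(\tau)\|\,d\tau$, which forces $w\equiv 0$, with the same backward-in-time argument for $t<0$. Your closing caveat about the borderline case $h_0=0$ is actually a point on which you are more careful than the paper, which writes $(\|u'\|^{l}u')'(0)=-\|A^{\frac{1}{2}}a\|^{\beta}Aa$ and thus tacitly assumes $g(0)=0$.
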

\begin{proof}
 From (\ref{1}), we obtain
 $$
 (\|u'\|^{l}u')'(0)=-\|A^{\frac{1}{2}}a\|^{\beta}Aa=\xi,
 $$
 where $\|\xi\|\neq 0$. \\
 We set  $\|u'\|^{l}u'=\psi(t)$, then $\psi(0)=0$. For any $t\neq 0$ we have
  $$
  \frac{\psi(t)}{t}=\frac{1}{t}\Int_0^t\psi'(s)\,ds.
  $$
  It follows that $\Frac{\psi(t)}{t}\rightarrow \xi$ as  $t\rightarrow 0$, therefore for  $|t|$ small enough we have,
  $$
  \|\psi(t)\|\geq \frac{t}{2}\|\xi\|.
  $$
 Hence, $\|u'\|^{l}\geq \eta t^{\frac{l}{l+1}}$
  for $\vert t\vert$ small enough and some $\eta>0$.\\

 Integrating (\ref{1}) over $(0,t)$, we have since $u'(0) = 0$
 $$
 \|u'(t)\|^{l}u'(t) = -\Int_0^t\|A^{\frac{1}{2}}u(\tau)\|^{\beta}Au(\tau)\,d\tau-\Int_0^t g(u'(\tau))\,d\tau.
 $$
 Let $u(t)$ and $v(t)$ be two solutions, then $w(t)=u(t)-v(t)$ satisfies
 {\small\begin{equation}\label{hh}
\begin{split}
 \|u'(t)\|^{l}u'(t) -\|v'(t)\|^{l}v'(t) &= -\Int_0^t(\|A^{\frac{1}{2}}u(\tau)\|^{\beta}Au(\tau)-\|A^{\frac{1}{2}}v(\tau)\|^{\beta}Av(\tau))\,d\tau\\&-
 \Int_0^t (g(u'(\tau))-g(v'(\tau)))\,d\tau.
 \end{split}
\end{equation}}
Applying Corollary \ref{coerc} with $f(s)= s^l $ and $p=l$ , we get
$$
\|\|u'(t)\|^{l}u'(t)- \|v'(t)\|^{l}v'(t)\|\geq \eta t^{\frac{l}{l+1}}\|u'(t)-v'(t)\|,
$$
and applying Lemma \ref{LEM12}, (\ref{SS1}) and from (\ref{hh}), we now deduce
{\small\begin{equation}\label{hhm1h}
\begin{split}
 \|w'(t)\|&\leq\frac{C}{t^{\frac{l}{l+1}}}\Int_0^t\Int_0^\tau \|w'(s)\|\, ds\,d\tau+
 \frac{C}{t^{\frac{l}{l+1}}}\Int_0^t\|w'(\tau)\|\,d\tau\\&\leq
  C(T)t^{-\frac{l}{l+1}}\Int_0^t \|w'(\tau)\|\,d\tau.
 \end{split}
\end{equation}}
Setting  $\phi(t)=\Int_0^t\|w'(\tau)\|\,d\tau$, by solving \eqref{hhm1h} on $[\delta, t]$ we obtain
$$
\phi(t)\leq \phi(\delta)e^{C(T)\int_\delta^t s^{-l/(l+1)}\,ds},
$$
and by letting $\delta \rightarrow 0$ we conclude that  $w(t)=0.$ A similar argument gives the uniqueness for $t$ negative with $\vert t\vert$ small enough.
\end{proof}
\begin{proposition}\label{PRO7}
For any interval $J$ and any  $t_0\in J$ if a solution $u$ of (\ref{1}) satisfies
$$
u\in {\mathcal C}^1(J,H),\,\,\,\,\,\,\|u'\|^{l}u'\in {\mathcal C}^1(J,H)\,\,\,\,\
\mbox{and}\,\,\,\, u(t_0)= u'(t_0)=0,
$$
then $u\equiv 0$.
\end{proposition}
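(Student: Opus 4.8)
The plan is to reduce the statement to a local uniqueness assertion at $t_0$ and to compare $u$ with the trivial solution. First note that $w\equiv 0$ solves (\ref{1}), which forces $g(0)=0$; combined with (\ref{SS1}) this gives $\|g(u'(s))\|=\|g(u'(s))-g(0)\|\le k_3(R)\|u'(s)\|$ on each ball $B_R$. After translating so that $t_0=0$, I observe that $t\mapsto u(-t)$ solves an equation of the same form with $g(s)$ replaced by $g(-s)$ (which is again locally Lipschitz and vanishes at $0$), so it suffices to establish $u\equiv 0$ on a one-sided neighbourhood $[0,T]$. If this local statement is available at every point where $u$ and $u'$ vanish, then $Z=\{t\in J:\ u(t)=u'(t)=0\}$ is both closed (by continuity) and open, hence equals the connected set $J$, giving $u\equiv 0$.

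For the local step I would integrate (\ref{1}) on $(0,t)$; since $u'(0)=0$ and $\|u'\|^{l}u'\in\mathcal{C}^1$,
$$\|u'(t)\|^{l}u'(t)=-\int_0^t\|A^{\frac{1}{2}}u(s)\|^{\beta}Au(s)\,ds-\int_0^t g(u'(s))\,ds.$$
Taking norms, bounding the first integrand by $C\|u(s)\|^{\beta+1}$ (using that $A$ is bounded and $\|A^{\frac{1}{2}}\cdot\|$ is equivalent to $\|\cdot\|$, as in Lemma \ref{LEM12}), and writing $\phi(t)=\int_0^t\|u'(s)\|\,ds\ge\|u(t)\|$, I am led to
$$\|u'(t)\|^{l+1}\le C\,t\,\phi(t)^{\beta+1}+\int_0^t\|g(u'(s))\|\,ds,\qquad \phi(0)=0.$$

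The main obstacle is that this inequality is genuinely degenerate: the left-hand side is $\|u'\|^{l+1}$ with $l>0$, so the crude bound $\|g(u')\|\le k_3\|u'\|$ only yields $\phi'(t)\le K\,\phi(t)^{1/(l+1)}$, a sublinear differential inequality that does \emph{not} force $\phi\equiv 0$. This reflects the fact that $u'=0$ is a singular point of the vector field, and it is exactly why the nondegeneracy device of Proposition \ref{Propu}---where $(\|u'\|^{l}u')'(0)=\xi\neq 0$ produced the lower bound $\|u'\|^{l}\ge\eta t^{l/(l+1)}$---is unavailable here. To close the estimate I would instead exploit the rate at which $g$ vanishes at the origin: feeding an a priori bound $\|u'(t)\|\le C t^{\gamma}$ into the integrated equation, the potential term contributes a power $t^{(\gamma+1)(\beta+1)+1}$ and, for the model nonlinearity $g(s)\sim c\|s\|^{\alpha}s$, the damping contributes $t^{(\alpha+1)\gamma+1}$, so that $\|u'(t)\|\le C' t^{\gamma_{\mathrm{new}}}$ with $\gamma_{\mathrm{new}}=\frac{(\alpha+1)\gamma+1}{l+1}$. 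The map $\gamma\mapsto\gamma_{\mathrm{new}}$ has slope $\frac{\alpha+1}{l+1}>1$ precisely when $\alpha>l$, in which case the bootstrap drives $\gamma\to\infty$ and forces $u'\equiv 0$ on $[0,T]$. I expect this bootstrap---rather than any single Gronwall estimate---to be the true crux, and I would flag that it closes only because $g$ vanishes fast enough at $0$: a merely Lipschitz control such as (\ref{SS1}) leaves the resonant exponent $\gamma^{*}=1/(l-\alpha)$ finite, so the structural behaviour of $g$ near the origin is exactly where the argument must do its work.
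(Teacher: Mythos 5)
Your proposal does not reach a proof, and the route you chose cannot be completed under the paper's stated hypothesis. The paper's own argument is a two-line energy estimate: by Theorem \ref{Energ}, every solution in this regularity class satisfies $\frac{d}{dt}E(t)=-(g(u'(t)),u'(t))$; the paper then bounds $|(g(u'),u')|\le k_3\|u'\|^{l+2}\le k_3\frac{l+2}{l+1}E(t)$ and applies Gronwall to $E$ forwards and backwards from $t_0$, where $E(t_0)=0$, to get $E\equiv 0$ on $J$. The idea you missed is to run the estimate at the level of the energy $E$ rather than on the integrated equation: since $\|u'\|^{l+2}$ is controlled \emph{linearly} by $E$, the differential inequality is non-degenerate and no bootstrap, no open-closed argument, and no case analysis is needed. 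Your plan instead works with pointwise bounds on $\|u'\|$, which forces the iteration $\gamma\mapsto\frac{(\alpha+1)\gamma+1}{l+1}$; besides being only sketched (the constants in the bootstrap are never controlled, and keeping them uniform on a short interval is where the real work lies), it yields the statement only under the model hypothesis $\|g(s)\|\lesssim\|s\|^{\alpha+1}$ with $\alpha>l$, not under (\ref{SS1}). Moreover your opening claim that ``$w\equiv 0$ solves (\ref{1}), which forces $g(0)=0$'' is circular: the zero function is a solution if and only if $g(0)=0$, and nothing in the hypotheses of the proposition hands you that fact.

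That said, your diagnosis of the degeneracy is substantively correct, and it cuts against the paper's proof as well: the inequality $|(g(u'),u')|\le k_3\|u'\|^{l+2}$ does \emph{not} follow from the local Lipschitz condition (\ref{SS1}). Lipschitz continuity together with $g(0)=0$ only gives $|(g(u'),u')|\le k_3\|u'\|^{2}$, hence $|E'|\le C E^{2/(l+2)}$, which is sublinear for $l>0$ and does not propagate $E(t_0)=0$ --- exactly the obstruction you identify in your own setting. The paper's estimate implicitly uses the stronger vanishing condition $\|g(v)\|\lesssim\|v\|^{l+1}$ near the origin (available, for instance, under (\ref{QA11}) with $\alpha\ge l$, which is the setting where the proposition is later invoked for backward uniqueness). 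So the honest summary is: the proposition requires such a vanishing hypothesis on $g$; once it is granted, the energy identity settles the matter in two lines, whereas your bootstrap --- even granting the same hypothesis --- is a much longer road that you did not finish.
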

\begin{proof} From theorem 2.3 we know that 
\begin{equation*}
\frac{d}{dt}E(t)=-(g(u'(t)),u'(t)).
\end{equation*}
Using (\ref{SS1}), we have
$$ |\frac{d}{dt}E(t)|=|(g(u'(t)),u'(t))|\leq k_3\|u'\|^{l+2}\leq k_3E(t). $$
Now, let $t_0 \in J$ such that $E(t_0)=0$. By integration  we get
$$|E(t)|\leq |E(t_0)|e^{k_3|t-t_0| }= 0.$$ \end{proof}
\section {Energy estimates  for equation (\ref{1})}
In this section, we suppose that
 \begin{equation}\label{QA11}
\exists \eta_1> 0,\,\,\,\,\,\, \forall v,\,\,\,  \|g(v)\|\leq \eta_1 \|v\|^{\alpha+1},
\end{equation}
and
\begin{equation}\label{QA1}
\exists \eta_2> 0,\,\,\,\,\,\, \forall v,\,\,\, (g(v),v)\geq \eta_2\|v\|^{\alpha+2},
\end{equation}
for some $\alpha > 0$.
\begin{theorem}\label{Th2}
Assuming $\alpha> l$, there exists  a positive constant $\eta$ such that if $u$ is any solution of (\ref{1}) with $E(0)\neq0$
     \begin{equation}\label{XE}
\liminf_{t\rightarrow +\infty}t^{\frac{l+2}{\alpha-l}}E(t)\geq \eta.
\end{equation}
\begin{itemize}
 \item[(i)] If $\alpha \geq \frac{\beta(1+l)+l}{\beta+2}$,
     then there is a constant $C(E(0))$ depending on
     $E(0)$ such that
     $$
     \forall t\geq 1,\,\,\,\ E(t)\leq C(E(0)) t^{-\frac{l+2}{\alpha-l}}.
     $$
     \item[(ii)] If $\alpha< \frac{\beta(1+l)+l}{\beta+2}$ ,
         then there is a constant $C(E(0))$ depending
         on $E(0)$ such that
     $$
     \forall t\geq 1,\,\,\,\ E(t)\leq C(E(0)) t^{-\frac{(\alpha+1)(\beta+2)}
     {\beta-\alpha}}.
     $$
 \end{itemize}
\end{theorem}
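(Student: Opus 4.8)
The entire argument rests on the energy identity of Theorem \ref{Energ}, $\frac{d}{dt}E(t)=-(g(u'(t)),u'(t))$, which together with \eqref{QA11}--\eqref{QA1} and Cauchy--Schwarz yields the two-sided bound
\[
\eta_2\|u'(t)\|^{\alpha+2}\le -E'(t)\le \eta_1\|u'(t)\|^{\alpha+2}.
\]
In particular $E$ is nonincreasing, and from \eqref{energy} one has the elementary comparisons $\|u'\|^{l+2}\le\frac{l+2}{l+1}E$ and $\|A^{\frac12}u\|^{\beta+2}\le(\beta+2)E$. I would first observe that $E(0)\neq0$ forces $E(t)>0$ for all $t$: by coercivity of $A$, $E(t_0)=0$ is equivalent to $u(t_0)=u'(t_0)=0$, which by Proposition \ref{PRO7} would give $u\equiv0$ and hence $E(0)=0$, a contradiction. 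Thus every power of $E$ below is well defined and positive.

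For the lower bound \eqref{XE} I would use only the right-hand inequality above together with $\|u'\|^{\alpha+2}\le\big(\frac{l+2}{l+1}E\big)^{(\alpha+2)/(l+2)}$ to obtain the autonomous differential inequality $-E'(t)\le C\,E(t)^{\gamma}$ with $\gamma:=\frac{\alpha+2}{l+2}$; the hypothesis $\alpha>l$ is exactly what makes $\gamma>1$. Dividing by $E^{\gamma}>0$ and integrating gives $E(t)^{1-\gamma}\le E(0)^{1-\gamma}+C(\gamma-1)t$, hence $E(t)\ge\big(E(0)^{1-\gamma}+C(\gamma-1)t\big)^{-1/(\gamma-1)}$; since $\frac{1}{\gamma-1}=\frac{l+2}{\alpha-l}$, letting $t\to+\infty$ produces \eqref{XE} with an explicit $\eta$. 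This part is clean and self-contained.

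For the upper bounds (i)--(ii) the energy dissipation alone is blind to the potential term, so I would introduce the auxiliary functional $\Phi(t):=(\|u'\|^{l}u',u)$ and compute, using \eqref{1} and $(Au,u)=\|A^{\frac12}u\|^2$, the virial identity
\[
\Phi'(t)=\|u'\|^{l+2}-\|A^{\frac12}u\|^{\beta+2}-(g(u'),u).
\]
Working with the modified energy $E_\varepsilon:=E+\varepsilon\Phi$ for $\varepsilon$ small then injects the crucial good term $-\varepsilon\|A^{\frac12}u\|^{\beta+2}$ into $-E_\varepsilon'$, which controls the potential part of $E$, while $-E'\ge\eta_2\|u'\|^{\alpha+2}$ controls the kinetic part. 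After estimating the cross term by $|(g(u'),u)|\le\eta_1\|u'\|^{\alpha+1}\|u\|\le c\,\|u'\|^{\alpha+1}\|A^{\frac12}u\|$ (using coercivity) and splitting it by Young's inequality, and after converting every remaining term into a power of $E$ via the comparisons above, the goal is a single differential inequality $-E_\varepsilon'(t)\ge c\,E(t)^{1+\sigma}$ valid for $t$ large, from which the standard integration of $-y'\ge c\,y^{1+\sigma}$ gives $E(t)\le C\,t^{-1/\sigma}$. The admissible exponent is governed by the competition between $\gamma=\frac{\alpha+2}{l+2}$ and the potential-driven exponent $\theta_2=\frac{(\beta+1)(\alpha+2)}{(\alpha+1)(\beta+2)}$ produced by the two natural Young splits of the cross term, and a direct computation shows $\gamma\ge\theta_2\iff\alpha\ge\frac{\beta(l+1)+l}{\beta+2}$. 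One therefore takes $1+\sigma=\max(\gamma,\theta_2)$: in case (i) this is $\gamma$, giving $E\le C\,t^{-1/(\gamma-1)}=C\,t^{-(l+2)/(\alpha-l)}$, and in case (ii) it is $\theta_2$, giving $E\le C\,t^{-1/(\theta_2-1)}=C\,t^{-(\alpha+1)(\beta+2)/(\beta-\alpha)}$, since $\frac{1}{\theta_2-1}=\frac{(\alpha+1)(\beta+2)}{\beta-\alpha}$.

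The hard part is precisely the mismatch of the three homogeneities $l+2$, $\alpha+2$, $\beta+2$: the indefinite term $+\varepsilon\|u'\|^{l+2}$ coming from $\Phi'$ is of lower order than the dissipation $\|u'\|^{\alpha+2}$ for small velocities, and the potential piece $\|A^{\frac12}u\|^{\alpha+2}$ produced by Young does not match $\|A^{\frac12}u\|^{\beta+2}$ unless $\alpha=\beta$. Controlling these simultaneously requires choosing $\varepsilon$ small, restricting to $t$ large (so that $E$ is small), and checking that $E_\varepsilon$ stays comparable to $E$. It is exactly this balancing that forces the threshold $\frac{\beta(l+1)+l}{\beta+2}$ and explains why the uniform upper rate in case (ii) is the slower exponent rather than that of the lower bound \eqref{XE}: the potential-dominated (slow) solutions cannot decay faster than $t^{-(\alpha+1)(\beta+2)/(\beta-\alpha)}$, which caps what can be proved for all solutions at once.
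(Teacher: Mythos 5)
Your proof of the lower bound \eqref{XE} is correct and coincides with the paper's argument (integrate $-E'\le C E^{\frac{\alpha+2}{l+2}}$ using \eqref{V1}, \eqref{QA11} and $\|u'\|^{l+2}\le \frac{l+2}{l+1}E$); your preliminary remark that $E(t)>0$ for all $t$, via Proposition \ref{PRO7}, is a point the paper leaves implicit. The gap is in the upper bounds (i)--(ii). You perturb with the \emph{unweighted} virial $\Phi=(\|u'\|^{l}u',u)$ and assert that the two homogeneity mismatches you yourself identify can be handled by taking $\varepsilon$ small and $t$ large so that $E$ is small. They cannot, because both mismatches are scale-invariant. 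Concretely, at any phase-space point with $u=0$ and $u'\neq0$ one has $\|A^{\frac{1}{2}}u\|^{\beta+2}=0$ and $(g(u'),u)=0$, hence
\[
-E_\varepsilon'=(g(u'),u')-\varepsilon\|u'\|^{l+2}\le \eta_1\|u'\|^{\alpha+2}-\varepsilon\|u'\|^{l+2}<0
\quad\text{as soon as}\quad \|u'\|^{\alpha-l}<\varepsilon/\eta_1,
\]
so $E_\varepsilon$ is \emph{increasing} there while $E>0$, and no pointwise inequality $-E_\varepsilon'\ge c\,E^{1+\sigma}$ can hold; such crossings occur at arbitrarily large times, with $\|u'\|\to0$, for the oscillatory solutions typical of regime (i) (cf.\ the scalar results recalled from \cite{AMH}), and shrinking $\varepsilon$ only enlarges the set of bad times. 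The same scale-invariance defeats your second splitting: the Young by-product $\delta\|A^{\frac{1}{2}}u\|^{\alpha+2}$ from the cross term dominates, rather than is absorbed by, $\varepsilon\|A^{\frac{1}{2}}u\|^{\beta+2}$ for small norms whenever $\alpha<\beta$.

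The missing idea --- the paper's actual device --- is to weight the perturbation by a power of $\|u\|$: the paper takes $E_\varepsilon=E+\varepsilon(\|u\|^{2\gamma}u,\|u'\|^{l}u')$ as in \eqref{4}, with $\gamma=\frac{(\beta+2)(\alpha-l)}{2(l+2)}$ in case (i) and $\gamma=\frac{\beta-\alpha}{2(\alpha+1)}$ in case (ii). This weight fixes both problems at once. The good term becomes $-c\varepsilon\|A^{\frac{1}{2}}u\|^{2\gamma+\beta+2}$, a genuinely higher power of $E$ matching the target exponent; the bad kinetic term becomes $\varepsilon\|u\|^{2\gamma}\|u'\|^{l+2}$, which Young's inequality with exponents $\frac{\alpha+2}{\alpha-l}$ and $\frac{\alpha+2}{l+2}$ splits into $\delta\|A^{\frac{1}{2}}u\|^{2\gamma\frac{\alpha+2}{\alpha-l}}+C(\delta)\|u'\|^{\alpha+2}$, absorbable by the good term and by the dissipation precisely under condition \eqref{555}; and the cross term $\varepsilon(\|u\|^{2\gamma}u,g(u'))$ is absorbable under condition \eqref{5555M}. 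Your exponent bookkeeping --- the competition between $\frac{\alpha+2}{l+2}$ and $\frac{(\beta+1)(\alpha+2)}{(\alpha+1)(\beta+2)}$, and the threshold $\alpha\ge\frac{\beta(l+1)+l}{\beta+2}$ --- is exactly what this case-dependent choice of $\gamma$ realizes; but without the weight $\|u\|^{2\gamma}$ the differential inequality you aim at is false, so the proposal as written does not close.
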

\begin{proof}
From the definition of $E(t)$ we have
$$
\|u'(t)\|^{\alpha+2}\leq C({ l},\alpha)E(t)^{\frac{\alpha+2}{l+2}},
$$
where $C({ l},\alpha)$ is a positive constant, hence from (\ref{V1}) and (\ref{QA11}) we deduce
$$
\frac{d}{dt}E(t)\geq -C({ l},\alpha,\eta_1)E(t)^{\frac{\alpha+2}{l+2}}.
$$
Assuming $\alpha>l$ we derive
{\small\begin{equation*}
 \begin{split}
 \frac{d}{dt}E(t)^{-\frac{\alpha-l}{l+2}}&=-\frac{\alpha -l}{l+2}E'(t)E(t)^{-\frac{\alpha+2}{l+2}}\\&
 \leq \frac{\alpha -l}{l+2}C({ l},\alpha,\eta_1)=C_1.
 \end{split}
\end{equation*}}
By integrating, we get
$$
E(t)\geq (E(0)^{-\frac{\alpha-l}{l+2}}+C_1t)^{-\frac{l+2}{\alpha-l}},
$$
implying
$$
\liminf_{t\rightarrow +\infty}t^{\frac{l+2}{\alpha-l}}E(t)\geq \eta =C_1^{-\frac{l+2}{\alpha-l}}.
$$
Hence (\ref{XE}) is proved. Now, we show $(i)$ and $(ii)$,
we consider the perturbed energy function
\begin{equation}\label{4}
E_\varepsilon (t) = E(t) + \varepsilon  (\|u\|^{2\gamma}u,\|u'\|^{l}u'),
\end{equation}
where ${ l} > 0$,\,\,$\gamma >0$ and $\varepsilon > 0.$\\
By Young's inequality, with the conjugate exponents $l+2$ and $\Frac{l+2}{l+1}$, we get
{\small\begin{equation*}
\begin{split}
\vert (\|u\|^{2\gamma}u,\|u'\|^{l} u')\vert &
\leq C_1\|A^{\frac{1}{2}}u\|^{(2\gamma+1)(l+2)}+c_2\|u'\|^{l+2}.
\end{split}
\end{equation*}}
We choose $\gamma$ so that $(2\gamma+1)(l+2)\geq \beta +2$, which reduces  to
\begin{equation}\label{55}
\gamma \geq \Frac{\beta -l}{2(l +2)}.
\end{equation}
Then, for some $C_1> 0, M>0$, we have
{\small\begin{equation}\label{5}
\begin{split}
\vert (\|u\|^{2\gamma}u,\|u'\|^{l} u')\vert &\leq C_1\|A^{\frac{1}{2}}u\|^{\beta+2}+c_2\|u'\|^{l+2}\\&\leq ME(t).
\end{split}
\end{equation}}
By using (\ref{5}), we obtain from (\ref{4})
$$
(1-M\varepsilon)E(t)\leq E_\varepsilon(t) \leq (1+M\varepsilon)E(t).
$$
Taking  $\varepsilon\leq \Frac{1}{{ 2M}}$, we deduce
\begin{equation}\label{6}
\forall t\geq 0,\,\,\, \Frac{1}{2}E(t)\leq E_\varepsilon(t)\leq 2 E(t).
\end{equation}
On the other hand, we have
{\small\begin{equation*}
\begin{split}
E'_\varepsilon (t) &= -(g(u'),u')+ \varepsilon  ((\|u\|^{2\gamma} u)',\|u'\|^{l} u')+
\varepsilon  (\|u\|^{2\gamma} u ,(\|u'\|^{l} u')').
\end{split}
\end{equation*}}
We observe that
\begin{equation*}
\begin{split}
(\|u\|^{2\gamma}u)'&=((\|u\|^{2})^{\gamma})'u+\|u\|^{2\gamma}u'
\\&
=2\gamma\|u\|^{2(\gamma-1)}( u ,u')u +\|u\|^{2\gamma}u',
\end{split}
\end{equation*}
and we have
\begin{equation*}
\begin{split}
((\|u\|^{2\gamma}u)',\|u'\|^{l} u')&=2\gamma\|u\|^{2(\gamma-1)}\|u'\|^{l}(u,u) ( u', u') +(\|u\|^{2\gamma}u',\|u'\|^{l} u')\\&=
2\gamma\|u\|^{2\gamma}\|u'\|^{l+2}+(\|u\|^{2\gamma}u',\|u'\|^{l} u').
\end{split}
\end{equation*}
Then, we can deduce that 
{\small\begin{equation}\label{33aH9}
\begin{split}
E'_\varepsilon (t) &= -(g(u'),u')+ \varepsilon 2\gamma\|u\|^{2\gamma}\|u'\|^{l+2} +\varepsilon (\|u\|^{2\gamma}u',\|u'\|^{l} u')\\&
-\varepsilon  (\|u\|^{2\gamma}u,\|A^{\frac{1}{2}}u\|^{\beta}Au)-\varepsilon  (\|u\|^{2\gamma}u,g(u')).
\end{split}
\end{equation}}
We now estimate the right side of (\ref{33aH9}),\\
The fourth term:
{\small\begin{equation}\label{33aH}
\begin{split}
-\varepsilon  (\|u\|^{2\gamma}u,\|A^{\frac{1}{2}}u\|^\beta Au)&=-  \varepsilon  \|u\|^{2\gamma}  
\|A^{\frac{1}{2}} u\|^\beta(u, A u)\\&=- \varepsilon  \|u\|^{2\gamma}  \|A^{\frac{1}{2}} u\|^{\beta+2}\\&\leq
 -c \varepsilon \|A^{\frac{1}{2}} u\|^{2\gamma +\beta+2}.
\end{split}
\end{equation}}
The second term:
\begin{equation*}
\begin{split}
\|u\|^{2\gamma}\|u'\|^{l+2} \leq C_2\|A^{\frac{1}{2}}u\|^{2\gamma}\|u'\|^{l+2}.
\end{split}
\end{equation*}
The third term:
\begin{equation*}
\begin{split}
|(\|u\|^{2\gamma}u',\|u'\|^{l} u')|=\|u\|^{2\gamma}\|u'\|^{l} (u',u')\leq
C_3 \|A^{\frac{1}{2}}u\|^{2\gamma}\|u'\|^{l+2}.
\end{split}
\end{equation*}
Applying Young's inequality, with the conjugate exponents $\frac{\alpha+2}{\alpha-l}$ and $\frac{\alpha+2}{l+2}$,
we have
$$
\|A^{\frac{1}{2}}u\|^{2\gamma}\|u'\|^{l+2}\leq
\delta\|A^{\frac{1}{2}}u\|^{2\gamma\frac{\alpha+2}{\alpha-l}}+C(\delta) \|u'\|^{(l+2)\frac{\alpha+2}{l+2}}.
$$
We assume $$\frac{(\alpha+2)2\gamma}{\alpha-l}\geq 2\gamma+\beta
+2,$$ which reduces to the condition
\begin{equation}\label{555}
 \gamma \geq
\frac{(\beta+2)(\alpha-l)}{2(l+2)},
\end{equation}
and taking $\delta$ small enough, we have for some $P> 0$, therefore the second and the third terms becomes
\begin{equation}\label{33aH1}
\begin{split}
\varepsilon 2\gamma\|u\|^{2\gamma}\|u'\|^{l+2} +\varepsilon (\|u\|^{2\gamma}u',\|u'\|^{l} u')&\leq  \varepsilon C ( 2\gamma +1)\|A^{\frac{1}{2}}u\|^{2\gamma}\|u'\|^{l+2} 
\\&\leq \frac{\varepsilon }{4}\|A^{\frac{1}{2}}u\|^{2\gamma+\beta+2}+\varepsilon P \|u'\|^{\alpha+2}. 
\end{split}
\end{equation}
Using (\ref{QA1}), (\ref{33aH9}),  (\ref{33aH}) and (\ref{33aH1}), we have
{\small\begin{equation}\label{33aH4}
\begin{split}
E'_\varepsilon (t) \leq (-\eta_2+\varepsilon P ) \|u'\|^{\alpha+2}-\varepsilon  \|A^{\frac{1}{2}}u\|^{2\gamma+\beta+2}+
\frac{\varepsilon }{4}\|A^{\frac{1}{2}}u\|^{2\gamma+\beta+2}
-\varepsilon  (\|u\|^{2\gamma}u,g(u')).
\end{split}
\end{equation}}
Applying Young's inequality, with the conjugate exponents $\alpha+2$ and
$\frac{\alpha+2}{\alpha+1}$ and using (\ref{QA11}),  we have
{\small\begin{equation*}
\begin{split}
- (\|u\|^{2\gamma}u,g(u'))&
\leq \delta(\|u\|^{(2\gamma+1)(\alpha+2)}
+C'(\delta)\|g(u')\|^{\frac{\alpha+2}{\alpha+1}}
\\&\leq C_4\delta\|A^{\frac{1}{2}}u\|^{(2\gamma+1)(\alpha+2)}
+C'(\delta)\|u'\|^{\alpha+2}.
\end{split}
\end{equation*}}
This term will be dominated by the negative terms assuming
$$(2\gamma+1)(\alpha+2)\geq 2\gamma+\beta+2\Leftrightarrow
(\alpha+1)(2\gamma+1)\geq \beta+1.$$ This is equivalent to the condition
\begin{equation}\label{5555M}
\gamma \geq \frac{\beta-\alpha}{2(\alpha+1)},
\end{equation}
and taking $\delta$ small enough, we have
$$
-\varepsilon(\|u\|^{2\gamma}u,g(u'))\leq  \frac{\varepsilon }{4}\|A^{\frac{1}{2}}u\|^{2\gamma+\beta+2}
+P'\varepsilon\|u'\|^{\alpha+2}.
$$
By replacing in (\ref{33aH4}), we have
$$
E'_\varepsilon(t)\leq(-\eta_2+Q\varepsilon)\|u'\|^{\alpha+2}-\Frac{\varepsilon }{2}\|A^{\frac{1}{2}}u\|^{2\gamma+\beta+2},
$$
where $Q=P+P'$. By choosing $\varepsilon$ small, we get
{\small\begin{equation}\label{0}
 \begin{split}
E'_\varepsilon(t)&\leq-\frac{\varepsilon}{2}\Big(\|u'\|^{\alpha+2}
+\|A^{\frac{1}{2}}u\|^{2\gamma+\beta+2}\Big)\\&\leq -\frac{\varepsilon}{2}
\Big((\|u'\|^{l+2})^{\frac{\alpha+2}{l+2}}
+(\|A^{\frac{1}{2}}u\|^{\beta+2})^{\frac{2\gamma+\beta+2}{\beta+2}}\Big).
\end{split}
\end{equation}}
This inequality will be satisfied under the assumptions (\ref{55}), (\ref{555}) and (\ref{5555M}) which lead to the sufficient condition
\begin{equation}
\gamma \geq \gamma_0 = \max\Big\{\frac{\beta-l}{2(l+2)},\frac{(\beta+2)(\alpha-l)}{2(l+2)},
\frac{\beta-\alpha}{2(\alpha+1)}\Big\}.
\end{equation}
We now  distinguish $2$ cases.

\begin{itemize}

 \item[(i)] If $\alpha \geq \Frac{\beta(1+l)+l}{\beta +2}$,
     then clearly $\Frac{(\beta+2)(\alpha-l)}{2(l+2)} \geq
     \Frac{\beta-l}{2(l+2)}.$
     
Moreover
$$
\Frac{\beta-\alpha}{2(\alpha+1)}=\frac{1}{2}\left(\Frac{\beta+1}{\alpha+1}-1\right) \leq
\frac{1}{2}\left(\Frac{\beta+1}{\frac{\beta(1+l)+l}{\beta+2}+1}-1\right) = \Frac{\beta-l}{2(l+2)}.
$$
In this case $\gamma_0=\Frac{(\beta+2)(\alpha-l)}{2(l+2)}$ and
choosing $\gamma=\gamma_0$, we find
$$
2\gamma+\beta+2 = \Frac{\alpha+2}{l+2}(\beta+2),
$$
since $ \Frac{2\gamma+\beta+2}{\beta+2} =1+
\Frac{\alpha-l}{l+2}$, replacing in (\ref{0}), we obtain for some $\rho>0$
\begin{equation*}
\begin{split}
E'_\varepsilon(t)\leq
 -\rho E(t)^{1+\frac{\alpha-l}{l+2}}\leq -\rho'
E_\varepsilon(t)^{1+\frac{\alpha-l}{l+2}},
\end{split}
\end{equation*}
where $\rho$ and $\rho'$ are positive constants.
\item[(ii)] If $ \alpha< \Frac{\beta(l+1)+l}{\beta+2}$, then
     $\Frac{(\beta+2)(\alpha-l)}{l+2}<\Frac{\beta-l}{l+2}.$
    
Moreover
\begin{eqnarray*}
\Frac{\beta-\alpha}{2(\alpha+1)}-\Frac{\beta-l}{2(l+2)}
&=& \Frac{(\beta-\alpha)(l+2)-(\beta-l)(\alpha+1)}{2(\alpha+1)(l+2)}\\
&=&\Frac{\beta(l+1)+l-\alpha(\beta+2)}{2(\alpha+1)(l+2)}> 0.
\end{eqnarray*}
In this case $\gamma_0= \Frac{\beta-\alpha}{2(\alpha+1)}$ and
choosing $\gamma= \gamma_0,$ we find
\begin{equation}\label{567B}
2\gamma+\beta+2 =(\beta+2)\left(1+\Frac{2\gamma}{\beta+2}\right)
=(\beta+2)\left(1+\Frac{\beta-\alpha}{(\alpha+1)(\beta+2)}\right),
\end{equation}
since $\gamma > \Frac{(\beta+2)(\alpha-l)}{l+2}$, we have
$$
\Frac{2\gamma+\beta+2}{\beta+2}=1 + \Frac{2\gamma}{\beta+2}> 1+\Frac{\alpha-l}{l+2}
=\Frac{\alpha+2}{l+2},
$$
replacing in (\ref{0}), we obtain {\small\begin{equation*}
 \begin{split}
E'_\varepsilon(t)&\leq -\delta\varepsilon (\|u'\|^{l+2}
+\|A^{\frac{1}{2}}u\|^{\beta+2})^\frac{2\gamma+\beta+2}{\beta+2},
\end{split}
\end{equation*}}
for some $\delta> 0$. Using (\ref{567B}), we have
\begin{equation*}
E'_\varepsilon(t)\leq -\rho E^{(1+\frac{\beta-\alpha}{(\alpha+1)(\beta+2)})}
\leq-\rho' E_\varepsilon(t)^{(1+\frac{\beta-\alpha}{(\alpha+1)(\beta+2)})},
\end{equation*}
where $\rho$ and $\rho'$ are positive constants.
\end{itemize}
\end{proof}
\section{Slow and fast solutions for equation (\ref{1}) }

In the case where  $\alpha< \frac{\beta(1+l)+l}{\beta+2}$, Theorem \ref {Th2} gives two different decay rates for the lower and the upper estimates of the energy. In the scalar case, this fact was explained in \cite{AMH} by the existence of two (and only two) different decay rates of the solutions, corresponding precisely to the lower and upper estimates. The solutions behaving as the lower estimate were called ``fast solutions" and those behaving as the upper estimate were called ``slow solutions." Moreover in the scalar case, it was shown that the set of initial data giving rise to ``slow solutions" has non-empty interior in the phase space $\R^2.$ \\

In the general case, by reducing the problem to a related scalar equation, it is rather immediate to show the coexistence of slow and fast solutions in the special case of power nonlinearities. More precisely we have 

\begin{proposition} Let $\alpha< \frac{\beta(1+l)+l}{\beta+2}$ and $c>0$.  
Then the equation  $$ (\|u'\|^{l}u')'+\|A^{\frac{1}{2}}u\|^\beta Au+c\|u'\|^\alpha u' = 0 $$ has an infinity of ``fast solutions" with energy comparable to  $t^{-\frac{l+2}{\alpha-l}}$ and an infinity of ``slow solutions" with energy comparable to $ t^{-\frac{(\alpha+1)(\beta+2)}{\beta-\alpha}}$ as $t$ tends to infinity.
\end{proposition}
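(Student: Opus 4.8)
The plan is to reduce the vector equation to a scalar one by restricting attention to a single eigendirection of $A$. Since $H$ is finite dimensional and $A$ is symmetric and positive, $A$ admits an orthonormal basis of eigenvectors; fix a unit eigenvector $e$ with $Ae=\mu e$, where $\mu\ge\lambda>0$. I would look for solutions of the special form $u(t)=\phi(t)\,e$ with $\phi$ real valued. Because $e$ is also an eigenvector of $A^{\frac{1}{2}}$ (with eigenvalue $\sqrt{\mu}$), every term of the equation remains on the line $\R e$: indeed $\|u'\|=|\phi'|$, $\|u'\|^{l}u'=|\phi'|^{l}\phi'\,e$, $\|A^{\frac{1}{2}}u\|=\sqrt{\mu}\,|\phi|$, $\|A^{\frac{1}{2}}u\|^{\beta}Au=\mu^{(\beta+2)/2}|\phi|^{\beta}\phi\,e$, and $c\|u'\|^{\alpha}u'=c\,|\phi'|^{\alpha}\phi'\,e$. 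Projecting onto $e$, one sees that $u=\phi e$ solves the vector equation if and only if $\phi$ solves the scalar ODE
\begin{equation*}
(|\phi'|^{l}\phi')'+\mu^{(\beta+2)/2}\,|\phi|^{\beta}\phi+c\,|\phi'|^{\alpha}\phi'=0 .
\end{equation*}

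Next I would identify the two energies. Substituting $u=\phi e$ into \eqref{energy} gives
\begin{equation*}
E(t)=\frac{l+1}{l+2}\,|\phi'(t)|^{l+2}+\frac{\mu^{(\beta+2)/2}}{\beta+2}\,|\phi(t)|^{\beta+2},
\end{equation*}
which is exactly the natural energy of the scalar equation above. Hence the asymptotics of $E$ along the lifted solution $u$ coincide with those of the scalar energy of $\phi$. The scalar equation is precisely of the type treated by Abdelli and Haraux \cite{AMH}, with positive coefficients $\mu^{(\beta+2)/2}$ and $c$; since the hypothesis $l<\alpha<\frac{\beta(1+l)+l}{\beta+2}$ forces $\beta>l$, a trivial rescaling $\phi\mapsto\sigma\phi$ normalises the coefficient of $|\phi|^{\beta}\phi$ to $1$, and in any case the decay exponents produced by their analysis depend only on $l,\alpha,\beta$ (as already visible in Theorem \ref{Th2}), not on the coefficients. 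In the regime $\alpha<\frac{\beta(1+l)+l}{\beta+2}$ their results yield two distinct rates: slow solutions with energy comparable to $t^{-\frac{(\alpha+1)(\beta+2)}{\beta-\alpha}}$ and fast solutions with energy comparable to $t^{-\frac{l+2}{\alpha-l}}$.

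Finally, I would transfer the abundance statement. By \cite{AMH}, the set of scalar initial data $(\phi(0),\phi'(0))$ giving slow solutions has non-empty interior in $\R^{2}$, hence is infinite, while the fast solutions also form an infinite family. Through the ansatz $u=\phi e$, each such $\phi$ produces a solution of the vector equation, and distinct pairs $(\phi(0),\phi'(0))$ give distinct initial states $(\phi(0)e,\phi'(0)e)$ with the same energy decay rate; this yields the asserted infinity of fast solutions and infinity of slow solutions. The only delicate point is the faithful transfer of the scalar dichotomy, namely that the energy of the lifted solution genuinely inherits the scalar rate (it does, the two energies being equal up to the fixed factor $\mu^{(\beta+2)/2}$) and that this factor does not disturb the classification (guaranteed by the coefficient-independence of the exponents). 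Thus the substantive input is entirely the scalar theory of \cite{AMH}, and the vector statement follows at once.
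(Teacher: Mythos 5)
Your proposal is correct and follows essentially the same route as the paper: pick a unit eigenvector $\varphi$ of $A$ with eigenvalue $\lambda$, make the ansatz $u(t)=v(t)\varphi$, observe that this solves the vector equation exactly when $v$ solves the scalar equation $(|v'|^{l}v')'+\lambda^{\frac{\beta}{2}+1}|v|^{\beta}v+c|v'|^{\alpha}v'=0$, and then invoke the slow/fast dichotomy for the scalar equation from \cite{AMH}. Your additional remarks on the coefficient-independence of the decay exponents and the identification of the lifted energy are consistent with, and slightly more explicit than, the paper's own argument.
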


\begin{proof} Let $\lambda>0$ be any eigenvalue of $A$ and $A\varphi =\lambda \varphi$ with $\|\varphi\|= 1. $ Let $(v_0,v_1) \in \R^2$ and $v$ be the solution of \begin{equation*}
(|v'|^{l}v')'+C_1|v|^\beta v+C_2|v'|^\alpha v'=0,
\end{equation*} where $C_1, C_2$ are positive constants to be chosen later. Then $u(t) = v(t) \varphi$ satisfies

$$(\|u'\|^{l}u')'+\|A^{\frac{1}{2}}u\|^\beta Au+c\|u'\|^\alpha u'= (|v'|^{l}v')'\|\varphi\|^{l}\varphi+|v|^\beta v\|A^{\frac{1}{2}}\varphi\|^\beta A\varphi+c|v'|^\alpha v'\|\varphi\|^\alpha\varphi
$$ 
$$= [(|v'|^{l}v')'+\lambda^{\frac{\beta}{2} +1}|v|^\beta v +c|v'|^\alpha v']\varphi. $$  Choosing $ C_1= \lambda^{\frac{\beta}{2} +1}$ and $C_2 = c$ , $u(t) = v(t) \varphi$ becomes a solution of the vector equation. The existence of an infinity of ``fast solutions"  and an infinity of ``slow solutions" are then an immediate consequence of the same result for the scalar equation proven in  {\bf\cite{AMH}}. \end{proof}

\begin{remark}\rm{In the special case $A = \lambda I $, we can take for $\varphi$ any vector of the sphere $\|\varphi\|= 1. $ We obtain in this way an open set of slow solutions in $ \mathbb{R}^{N+1}$ corresponding to the initial data of the form $(v_0\varphi, v_1\varphi). $ Actually, in the general case, by generalizing a modified energy method introduced in \cite{GGH} , under the additional condition $l<1$ we shall now prove the existence of an open set of slow solutions. } \end{remark}

\begin{theorem} \label{Main_Theorem}
Assume that $g$ satisfies (\ref{QA1}) and $l<1$, $l<\alpha <\frac{\beta(1+l)+l}{\beta+2}$. Then, there exist a nonempty open set $\mathcal{S}\subset H\times H$ and a constant $M$ such that, for every $(u_0,u_1) \in \mathcal{S}$, the unique global solution of equation (\ref{1}) with initial data $(u_0,u_1)$ satisfies 
\begin{eqnarray}\label{estimate1}
\left\Vert u(t) \right\Vert \ge \frac{M}{(1+t)^{\frac{\alpha+1}{\beta-\alpha}}} \text{\ \ \ } \forall t \ge 0.
\end{eqnarray}
\end{theorem}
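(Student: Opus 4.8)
The plan is to bound the potential energy $\frac{1}{\beta+2}\|A^{\frac12}u\|^{\beta+2}$ from below, which is equivalent to \eqref{estimate1} since $\|A^{\frac12}u\|$ is an equivalent norm to $\|u\|$. By Theorem~\ref{Th2}(ii) we already possess the matching upper bound $E(t)\le C(1+t)^{-s}$ with $s=\frac{(\alpha+1)(\beta+2)}{\beta-\alpha}$, so the real task is to show that for an open set of data the energy decays \emph{no faster} than $t^{-s}$. I would reuse the perturbed energy $E_\varepsilon$ of \eqref{4} with the ``slow'' choice $\gamma=\gamma_0=\frac{\beta-\alpha}{2(\alpha+1)}$, for which $\frac{2\gamma+\beta+2}{\beta+2}=1+\kappa$ with $\kappa=\frac{\beta-\alpha}{(\alpha+1)(\beta+2)}$ (so that $1/\kappa=s$). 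Reading the identity for $E'_\varepsilon$ from the proof of Theorem~\ref{Th2} \emph{from below} instead of from above — keeping the two manifestly nonnegative terms $\varepsilon 2\gamma\|u\|^{2\gamma}\|u'\|^{l+2}$ and $\varepsilon(\|u\|^{2\gamma}u',\|u'\|^{l}u')$, and estimating the remaining terms by \eqref{QA11} and Young's inequality — one reaches a reverse differential inequality
\[
E'_\varepsilon(t)\ \ge\ -C\,\|u'(t)\|^{\alpha+2}\ -\ C\varepsilon\,E_\varepsilon(t)^{1+\kappa}.
\]

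The obstacle is the term $-C\|u'\|^{\alpha+2}$: the generic bound $\|u'\|^{\alpha+2}\le CE^{\frac{\alpha+2}{l+2}}$ is \emph{too weak}, since in case (ii) one has $\frac{\alpha+2}{l+2}<1+\kappa$ and it would only return the fast rate $t^{-\frac{l+2}{\alpha-l}}$. To improve it I must exploit that, along a slow solution, the kinetic energy $K=\frac{l+1}{l+2}\|u'\|^{l+2}$ is negligible against the potential energy $P=\frac{1}{\beta+2}\|A^{\frac12}u\|^{\beta+2}$. Concretely, set $\theta^\ast=\frac{\beta(1+l)+l-\alpha(\beta+2)}{(\alpha+1)(\beta+2)}$, which is positive precisely because $\alpha<\frac{\beta(1+l)+l}{\beta+2}$, and introduce the trapping region
\[
\Sigma=\Big\{(a,b)\in H\times H:\ A^{\frac12}a\neq0,\ \tfrac{l+1}{l+2}\|b\|^{l+2}\le C_0\,\big(\tfrac{1}{\beta+2}\|A^{\frac12}a\|^{\beta+2}\big)^{1+\theta^\ast}\Big\}.
\]
This set is open and nonempty. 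A direct computation shows that on $\Sigma$ one has $\|u'\|^{\alpha+2}\le C\,E_\varepsilon^{1+\kappa}$, the exponents matching \emph{exactly} because $(\alpha+2)(1+\theta^\ast)=(1+\kappa)(l+2)$ (equivalently $s\,\theta^\ast=(p+1)(l+2)-p(\beta+2)$ for the slow exponent $p=\frac{\alpha+1}{\beta-\alpha}$, i.e. $\theta^\ast$ is chosen so that the defining inequality of $\Sigma$ is marginally saturated along the slow rate). Hence on $\Sigma$ the reverse inequality collapses to $E'_\varepsilon\ge -C'E_\varepsilon^{1+\kappa}$.

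The heart of the proof — and the step I expect to be hardest — is to show that $\Sigma$ is \emph{forward invariant}, so that a solution issued from $\Sigma$ never leaves it and the clean inequality holds for all $t\ge0$. For this I would track the ratio $R=K/P$ directly: using Lemma~\ref{LAM} and equation \eqref{1} one gets $K'=-P'-(g(u'),u')$ with $P'=\|A^{\frac12}u\|^{\beta}(Au,u')$, whence
\[
R'\ =\ -\frac{P'\,E}{P^{2}}\ -\ \frac{(g(u'),u')}{P},
\]
in which the dissipative term is favourable ($\le0$ by \eqref{QA1}) and only $-P'E/P^2$ is dangerous. Bounding $|P'|\le C\|A^{\frac12}u\|^{\beta+1}\|u'\|$ and invoking the defining inequality of $\Sigma$ reduces the matter to controlling $\|u'\|/\|A^{\frac12}u\|$ by a positive power of $\|A^{\frac12}u\|$; this is exactly where the extra hypothesis $l<1$ is used, to make the H\"older exponent $\frac{1}{l+1}$ supplied by Corollary~\ref{hold} large enough to close the estimate and force the vector field to point strictly inward on $\partial\Sigma$ once $C_0$ is taken large. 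Once invariance is secured, integrating $E'_\varepsilon\ge -C'E_\varepsilon^{1+\kappa}$ gives $E_\varepsilon(t)\ge\big(E_\varepsilon(0)^{-\kappa}+C'\kappa t\big)^{-1/\kappa}\ge c(1+t)^{-s}$; then \eqref{6} yields $E(t)\ge\tfrac12 c(1+t)^{-s}$, and since on $\Sigma$ the potential energy is comparable to $E$ we obtain $\|A^{\frac12}u(t)\|^{\beta+2}\ge c'(1+t)^{-s}$, that is \eqref{estimate1} after using $\|u\|\ge c\|A^{\frac12}u\|$. The admissible set $\mathcal S$ is the (open, nonempty) intersection of $\Sigma$ with a small ball about the origin in $H\times H$, uniqueness of the corresponding global solution being guaranteed by Theorem~\ref{Uniq}.
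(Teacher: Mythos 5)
Your core geometric idea coincides with the paper's: your trapping region $\Sigma$ is, up to constants, exactly a sublevel set of the quantity the paper calls $H(t)=\|u'(t)\|^2/\|A^{\frac{1}{2}}u(t)\|^{2\gamma}$ with $\gamma=\frac{\beta+1}{\alpha+1}$ (indeed $(\beta+2)(1+\theta^*)=\frac{(\beta+1)(l+2)}{\alpha+1}$, so $K\le C_0P^{1+\theta^*}$ is the same as $H\le C$), and the paper likewise proves the slow lower bound by showing this region is forward invariant and then integrating the scalar inequality $|(\|u\|^2)'|\le 2\|u\|\,\|u'\|\le C\|u\|^{2+\frac{\beta-\alpha}{\alpha+1}}$. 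But your execution has genuine gaps. First, your reverse differential inequality for $E_\varepsilon$ requires the upper bound (\ref{QA11}) on $g$, which is \emph{not} a hypothesis of the theorem: only (\ref{QA1}) is assumed. Bounding $-(g(u'),u')$ from below needs an upper bound on $(g(u'),u')$; local Lipschitz continuity alone gives only $-(g(u'),u')\ge -L\|u'\|^{2}$, and $\|u'\|^{2}$ is not dominated by $E^{1+\kappa}$ on $\Sigma$ (the relevant exponent is $\frac{2(1+\theta^*)}{l+2}=\frac{2(1+\kappa)}{\alpha+2}<1+\kappa$). The paper never needs such a bound: the dissipation enters the derivative of $H$ with a favourable sign, so only (\ref{QA1}) is used, and the final lower bound comes from the ODE for $y=\|u\|^{2}$ rather than from a reverse energy inequality. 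Your whole $E_\varepsilon$ detour is thus both unnecessary and, under the stated hypotheses, unjustifiable.

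Second, the step you yourself flag as the heart of the matter, forward invariance, is sketched with the wrong mechanism. You differentiate $R=K/P$, whereas invariance of $\Sigma$ requires controlling $K/P^{1+\theta^*}$; more importantly, $l<1$ plays no role of the kind you describe. On $\partial\Sigma$ one has exactly $\|u'\|^{\alpha+1}=c\,C_0^{\frac{\alpha+1}{l+2}}\|A^{\frac{1}{2}}u\|^{\beta+1}$ (this exponent matching is precisely what your $\theta^*$ buys), so the dissipative term, bounded via (\ref{QA1}) by $-\eta_2\|u'\|^{\alpha+2}/P^{1+\theta^*}$, beats the dangerous term $|P'|/P^{1+\theta^*}\le C\|A^{\frac{1}{2}}u\|^{\beta+1}\|u'\|/P^{1+\theta^*}$ as soon as $C_0$ is large and the energy is small; no H\"older estimate from Corollary \ref{hold} enters anywhere. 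Where the paper actually uses $l<1$ is elsewhere: its $H$ involves $\|u'\|^{2}$, a sub-unity power of the $C^1$ function $\|u'\|^{l+2}$, and one needs the equation plus $l<1$ to show that $H$ is still differentiable (with derivative $0$) at times where $u'(t)=0$, via $\|u'(s)\|^{l+1}=O(|s-t|)$, hence $\|u'(s)\|^{2}=O(|s-t|^{2/(l+1)})=o(|s-t|)$. (Your ratio $K/P^{1+\theta^*}$ would in fact sidestep this issue thanks to Lemma \ref{LAM}, but you never make that observation; instead you attribute $l<1$ to an estimate in which it does not occur.) Finally, you never rule out that $u$ vanishes at some finite time, at which point the defining ratio of $\Sigma$ ceases to exist; the paper closes this hole with backward uniqueness (Proposition \ref{PRO7}): if $u(T)=0$ at a first time $T$, the boundedness of the ratio forces $u'(T)=0$ as well, hence $u\equiv 0$, a contradiction.
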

\begin{proof}
Assuming $(u_0,u_1)\in H \times H$ and $u_0\ne0$, we consider the following constants
\begin{eqnarray*}
\sigma_0:=\left(\frac{(\beta+2)(l+1)}{l+2} \left\Vert u_1\right\Vert ^{l+2}+ \|A^{\frac{1}{2}}u_0\|^{\beta+2} \right)^{\frac{1}{\beta+2}},
\end{eqnarray*}
and
\begin{eqnarray*}
\sigma_1:=\frac{\left\Vert u_1\right\Vert^2}{\|A^{\frac{1}{2}}u_0\|^{2\left(\frac{\beta-\alpha}{\alpha+1}+1\right)}}.
\end{eqnarray*}
For any $\varepsilon_0>0$, $\varepsilon_1>0$, the set $\mathcal{S}\subset H \times H$ of initial data such that $\sigma_0<\varepsilon_0$, $\sigma_1<\varepsilon_1$ is clearly a nonempty open set which contains at least all pairs $(u_0,u_1)$ with $u_1=0$ and $u_0\ne 0$ with $\left\Vert u_0 \right\Vert$ small enough. We claim that if $\varepsilon_0$ and $\varepsilon_1$ are small enough, for any $(u_0,u_1)\in \mathcal{S}$, the global solution of (\ref{1}) satisfies (\ref{estimate1}).

First of all, from (\ref{V1}) and (\ref{QA1}), we see that 
\begin{eqnarray*}
\frac{d}{dt}E(t)\leq -\eta_2\left\Vert u^{\prime}(t) \right\Vert^{\alpha+2}<0,
\end{eqnarray*}
hence $E(t)\leq E(0)$ for every $t\ge0$, and from (\ref{energy}), we deduce that 
\begin{eqnarray}\label{estimate_u}
\|A^{\frac{1}{2}}u(t)\| \leq \left[(\beta+2)E(0)\right]^{\frac{1}{\beta+2}}=\sigma_0 \text{\ \ } \forall t\ge0.
\end{eqnarray}
Then, we shall establish that if $\varepsilon_0$ and $\varepsilon_1$ are small enough, we have
\begin{eqnarray}\label{condition11}
u(t)\ne 0 \text{\ \ \ } \forall t\ge0,
\end{eqnarray}
and for some $C>0$
\begin{eqnarray}\label{condition22}
\frac{\left\Vert u^{\prime}(t)\right\Vert^2}{\|A^{\frac{1}{2}}u(t)\|^{2\left(\frac{\beta-\alpha}{\alpha+1}+1\right)}}\leq C \text{\ \ \ } \forall t\ge0.
\end{eqnarray}
Assuming this inequality, it follows immediately that $u$ is a slow solution. Indeed let us set $y(t):=\left\Vert u(t) \right\Vert^2$. We observe that
\begin{eqnarray*}\label{estimate_ne}
\left\vert y^{\prime}(t)\right\vert&=&2\left\vert \left ( u^{\prime}(t),u(t)\right )\right\vert \leq 2 \left\Vert u^{\prime}(t)\right\Vert \cdot \left\Vert u(t)\right\Vert \nonumber
\\
&\leq& 2 \frac{\left\Vert u^{\prime}(t)\right\Vert}{\|A^{\frac{1}{2}}u(t)\|^{\frac{\beta-\alpha}{\alpha+1}+1}}\cdot \|A^{\frac{1}{2}}u(t)\|^{\frac{\beta-\alpha}{\alpha+1}+1} \left\Vert u(t)\right\Vert \nonumber\\
&\leq&2\sqrt{C}\|u(t)\|^{\frac{\beta-\alpha}{\alpha+1}+2} \leq 2\sqrt{C} \left\vert y(t) \right\vert^{\frac{\beta-\alpha}{2(\alpha+1)}+1},
\end{eqnarray*}
and in particular
\begin{eqnarray*}\label{inequality_ne}
y^{\prime}(t)\ge -2\sqrt{C} \left\vert y(t) \right\vert^{\frac{\beta-\alpha}{2(\alpha+1)}+1}\text{\ \ \ } \forall t\ge0.
\end{eqnarray*}
Taking into account that
\begin{eqnarray*}
\left( y(t)^{-\frac{\beta-\alpha}{2(\alpha+1)}}\right)^{\prime}=-\frac{\beta-\alpha}{2(\alpha+1)}y(t)^{-\frac{\beta-\alpha}{2(\alpha+1)}-1}y^{\prime}(t),
\end{eqnarray*}
we have
\begin{eqnarray*}
\left( y(t)^{-\frac{\beta-\alpha}{2(\alpha+1)}}\right)^{\prime}\leq \sqrt{C}\frac{\beta-\alpha}{\alpha+1}.
\end{eqnarray*}
Integrating between $0$ and $t$ and since $y(0)>0$, we deduce that there exists a constant $M_1$ such that
\begin{eqnarray*}
 y(t)^{-\frac{\beta-\alpha}{2(\alpha+1)}}\leq M_1 (t+1) \text{\ \ \ } \forall t\ge0.
\end{eqnarray*}
This inequality concludes the proof. So we are left to prove (\ref{condition11}) and (\ref{condition22}). 

Let us set
\begin{eqnarray*}
T:=\sup \{t\ge0:\forall \tau\in [0,t], \text{\ \ }u(\tau)\ne 0\}.
\end{eqnarray*}
Since $u(0)\ne 0$, we have that $T>0$ and if $T<+\infty$, then $u(T)=0$.

Let us consider, for all $t\in(0,T)$, the energy
\begin{eqnarray}\label{definition_H}
H(t):=\frac{\left\Vert u^{\prime}(t)\right\Vert^{2}}{\|A^{\frac{1}{2}}u(t)\|^{2\gamma}},
\end{eqnarray}
where $\gamma:=\displaystyle \frac{\beta-\alpha}{\alpha+1}+1$.

If $u'(t) = 0$, then we claim that $H$ is differentiable at $t$ with $H'(t) = 0$. Indeed by the equation $$ \Big(\|u'\|^{l}u'\Big)'+\|A^{\frac{1}{2}}u\|^\beta Au+g(u')=0,
$$ we find that $\|u'\|^{l}u'(s) = O(s-t)$ for $s$ close to $t$, then  since $l<1$ we deduce $\left\Vert u^{\prime}(t)\right\Vert^{2} = O(s-t)$, thereby proving the claim. \\

If $u'(t) \not= 0$ , then again  $H$ is differentiable at $t$ with \begin{eqnarray*}
H^{\prime}(t)=\frac{\displaystyle \frac{d}{dt} \left(\left\Vert u^{\prime}(t) \right\Vert^{2+l}\right)}{\|A^{\frac{1}{2}}u(t)\|^{2\gamma}\left\Vert u^{\prime}(t) \right\Vert^{l}}-\frac{\displaystyle \frac{d}{dt} \left(\|A^{\frac{1}{2}}u(t)\|^{2\gamma}\left\Vert u^{\prime}(t) \right\Vert^{l}\right)}{\|A^{\frac{1}{2}}u(t)\|^{2\gamma}\left\Vert u^{\prime}(t) \right\Vert^{l}}H(t).
\end{eqnarray*}
Taking into account (\ref{energy}) and (\ref{V1}), we deduce 
\begin{eqnarray}\label{H_prime}
H^{\prime}(t)&=&-\rho \frac{(g(u^{\prime}(t)), u^{\prime}(t))}{\|A^{\frac{1}{2}}u(t)\|^{2\gamma}\left\Vert u^{\prime}(t) \right\Vert^{l}}-\rho \frac{ \|A^{\frac{1}{2}}u(t)\| ^{\beta-2\gamma}\left ( u^{\prime}(t),Au(t)\right )}{\left\Vert u^{\prime}(t) \right\Vert^{l}} \nonumber \\
&&-\frac{\displaystyle \frac{d}{dt} \left(\|A^{\frac{1}{2}}u(t)\|^{2\gamma}\left\Vert u^{\prime}(t) \right\Vert^{l}\right)}{\|A^{\frac{1}{2}}u(t)\|^{2\gamma}\left\Vert u^{\prime}(t) \right\Vert^{l}}H(t)=:H_1+H_2+H_3,
\end{eqnarray}
where $\rho:=\displaystyle \frac{l+2}{l+1}$.
\\

Let us estimate $H_1$, $H_2$ and $H_3$. Using (\ref{QA1}) and (\ref{definition_H}), we observe that
\begin{eqnarray}\label{I1L}
H_1&\leq&-\rho\, \eta_2\frac{\left\Vert u^{\prime}(t)\right\Vert^{\alpha+2-l}}{\|A^{\frac{1}{2}}u(t)\|^{2\gamma}}=- \rho \, \eta_2\left(\frac{\left\Vert u^{\prime}(t)\right\Vert^2}{\|A^{\frac{1}{2}}u(t)\|^{2\gamma}} \right)^{\frac{\alpha+2-l}{2}}\frac{\|A^{\frac{1}{2}}u(t)\|^{\gamma(\alpha+2-l)}}{\|A^{\frac{1}{2}}u(t)\|^{2\gamma}} \nonumber\\
&=&-\rho \, \eta_2 H^{\frac{\alpha+2-l}{2}}(t)\|A^{\frac{1}{2}}u(t)\|^{\gamma( \alpha-l)}.
\end{eqnarray}
In order to estimate $H_2$, we use Young's inequality applied with the conjugate exponents $\frac{\alpha+2-l}{1-l}$ and $\frac{\alpha+2-l}{\alpha+1}$,
\begin{eqnarray*}
\left\vert H_2 \right\vert &\leq& \rho\left\Vert u^{\prime}(t)\right\Vert^{1-l} \cdot \frac{\|A^{\frac{1}{2}}u(t)\|^{\beta+1}}{\|A^{\frac{1}{2}}u(t)\|^{2\gamma}}=\rho \frac{\left\Vert u^{\prime}(t)\right\Vert^{1-l}}{\|A^{\frac{1}{2}}u(t)\|^{\frac{2\gamma(1-l)}{\alpha+2-l}}}\cdot \frac{\|A^{\frac{1}{2}}u(t)\|^{\frac{2\gamma(1-l)}{\alpha+2-l}+\beta+1}}{\|A^{\frac{1}{2}}u(t)\|^{2\gamma}}\\
&\leq& \delta_1 \left(  \frac{\left\Vert u^{\prime}(t)\right\Vert^{1-l}}{\|A^{\frac{1}{2}}u(t)\|^{\frac{2\gamma(1-l)}{\alpha+2-l}}} \right)^\frac{\alpha+2-l}{1-l}+\delta_2 \left( \|A^{\frac{1}{2}}u(t)\|^{\frac{2\gamma(1-l)}{\alpha+2-l}+\beta+1-2\gamma}\right)^{\frac{\alpha+2-l}{\alpha+1}},
\end{eqnarray*}
where $\delta_1:=\displaystyle \rho\, \eta_2\left( \frac{1-l}{\alpha+2-l}\right)$ and $\delta_2:= \displaystyle \frac{\rho}{\eta_2}\left( \frac{\alpha+1}{\alpha+2-l}\right)$, and taking into account that $\gamma=\frac{\beta+1}{\alpha+1}$, we deduce
\begin{eqnarray}\label{I2}
\left\vert H_2 \right\vert &\leq&\delta_1 \left(  \frac{\left\Vert u^{\prime}(t)\right\Vert^2}{\|A^{\frac{1}{2}}u(t)\|^{2\gamma}} \right)^{\frac{\alpha+2-l}{2}}\frac{\|A^{\frac{1}{2}}u(t)\|^{\gamma(\alpha+2-l)}}{\|A^{\frac{1}{2}}u(t)\|^{2\gamma}}+\delta_2 \|A^{\frac{1}{2}}u(t)\| ^{\gamma( \alpha-l)} \nonumber\\
&=& \delta_1H^{\frac{\alpha+2-l}{2}}(t)\|A^{\frac{1}{2}}u(t)\|^{\gamma (\alpha-l)}+\delta_2 \|A^{\frac{1}{2}}u(t)\| ^{\gamma( \alpha-l)}.
\end{eqnarray}
{ For simplicity in the sequel we shall write $$ l/2: = m>0.$$
}
In order to estimate $H_3$, we use (\ref{definition_H}) and we have that
\begin{eqnarray*}
H_3&=&-\frac{\displaystyle \frac{d}{dt} \left(\|A^{\frac{1}{2}}u(t)\|^{2\gamma(1+m)}H(t)^m\right)}{\|A^{\frac{1}{2}}u(t)\|^{2\gamma}\left\Vert u^{\prime}(t) \right\Vert^{l}}H(t)=-mH(t)^mH^{\prime}(t)\frac{\|A^{\frac{1}{2}}u(t)\|^{2\gamma m}}{\left\Vert u^{\prime}(t) \right\Vert^{l}}\\
&&- 2\gamma (1+m)H(t)^{m+1}\frac{\|A^{\frac{1}{2}}u(t)\|^{2(\gamma m-1)}}{\left\Vert u^{\prime}(t) \right\Vert^{l}}\left ( u^{\prime}(t),Au(t)\right )=:H_4+H_5.
\end{eqnarray*}
We observe that taking into account (\ref{definition_H}), we have
\begin{eqnarray}\label{I3_1}
H_4=-mH^{\prime}(t),
\end{eqnarray}
and
\begin{eqnarray*}
\left\vert H_5 \right\vert \leq 2 \gamma(1+m)H(t)^{m+1}\frac{\|A^{\frac{1}{2}}u(t)\|^{2\gamma m}}{\left\Vert u^{\prime}(t)\right\Vert^{l}}\cdot \frac{\left\Vert u^{\prime}(t)\right\Vert}{\|A^{\frac{1}{2}}u(t)\|}=2\gamma(1+m) H(t)^{\frac{3}{2}}\|A^{\frac{1}{2}}u(t)\| ^{\gamma-1}.
\end{eqnarray*} 
We observe that the hypothesis \begin{eqnarray*}
\alpha <\frac{\beta(1+l)+l}{\beta+2},
\end{eqnarray*}
implies
\begin{eqnarray*}
\gamma-1> \gamma(\alpha-l), \end{eqnarray*}
and taking into account (\ref{estimate_u}), we obtain
\begin{eqnarray}\label{I3_2}
\left\vert H_5 \right\vert \leq 2\gamma(1+m) \sigma_0^{\gamma-1-\gamma(\alpha-l)}H(t)^{\frac{3}{2}}\|A^{\frac{1}{2}}u(t)\| ^{\gamma (\alpha-l)}.
\end{eqnarray} 
Then, taking into account (\ref{I1L})-(\ref{I3_2}) in (\ref{H_prime}), we deduce that
\begin{eqnarray}\label{derivative_HH}
\!\!\!\!\!\!\!\!\!\!\!\!H^{\prime}(t)&\!\!\leq\!\!& \|A^{\frac{1}{2}}u(t)\| ^{\gamma (\alpha-l)} \left[-\widehat{\rho} \, \eta_2 H^{\frac{\alpha+2-l}{2}}(t)+\frac{\widehat{\rho}}{\eta_2}+2\gamma \sigma_0^{\gamma-1-\gamma(\alpha-l)}H(t)^{\frac{3}{2}}  \right]\!,
\end{eqnarray}
where $\widehat{\rho}:=\frac{\rho(\alpha+1)}{(1+m)(\alpha+2-l)}$. \\

Here we observe that \eqref {derivative_HH} is still valid if $u'(t) = 0$, since then $H(t) = 0 = H'(t)$ and the RHS is non-negative. Now, let $$h(s,\Gamma)=-\widehat{\rho}\, \eta_2s^{\frac{\alpha+2-l}{2}}+\frac{\widehat{\rho}}{\eta_2}+ 2\gamma\Gamma s^{\frac{3}{2}},$$ where $\Gamma:= \sigma_0^{\gamma-1-\gamma(\alpha-l)}.$

 We observe that $h((2/\eta_2^2)^{2/(\alpha+2-l)},\Gamma)=-\Frac{\widehat{\rho}}{\eta_2}+ 2\gamma\Gamma \left(2/\eta_2^2\right)^{\frac{3}{\alpha+2-l}}.$ In particular $$h\left((2/\eta_2^2)^{2/(\alpha+2-l)},\Gamma\right)\rightarrow -\frac{\widehat{\rho}}{\eta_2}<0, \text{\ \ if  } \Gamma \rightarrow 0.$$Let us therefore assume that $\sigma_0$ is sufficiently small to achieve $$h((2/\eta_2^2)^{2/(\alpha+2-l)}, \sigma_0^{\gamma-1-\gamma(\alpha-l)})<0.$$

 We claim that if  $H(0)=\sigma_1<\varepsilon_1 : =(2/\eta_2^2)^{2/(\alpha+2-l)} $ , then $H(t)$ is bounded for all $t\in (0,T)$. Indeed if  $H$ is not bounded for all $t\in(0,T)$, then there exists $\bar{T}\in(0,T)$ such that $H(\bar{T})=\varepsilon_1 $ and $H(t)< \varepsilon_1 $ for all $t\in (0,\bar{T})$. By (\ref{derivative_HH}) and taking into account that $u\in \mathcal{C}^1(\mathbb{R}^+,H)$ and $\left\Vert u^{\prime}\right\Vert ^{l}u^{\prime}\in \mathcal{C}^1(\mathbb{R}^+,H) $, we have $$H^{\prime}(\bar{T})\leq \|A^{\frac{1}{2}}u(\bar{T})\| ^{\gamma( \alpha-l)}h(\varepsilon_1 ,\sigma_0^{\gamma-1-\gamma(\alpha-l)})\leq \left\Vert u(\bar{T}) \right\Vert ^{\gamma( \alpha-l)}h(\varepsilon_1 ,\sigma_0^{\gamma-1-\gamma(\alpha-l)})<0,$$
then since   $\bar{T}\in(0,T)$ implies $\left\Vert u(\bar{T}) \right\Vert>0$,  $H$ decreases near $\bar{T}$. This contradicts the definition of $\bar{T}$ and we can claim that $H$ is bounded for all $t\in (0,T)$.

Finally, we claim that $T=+\infty$. Let us assume by contradiction that this is not the case. Then, taking into account that $H$ is bounded for all $t\in [0,T)$, we observe that
\begin{eqnarray*}
\left\Vert u^{\prime}(t)\right\Vert^{2}\leq C \|A^{\frac{1}{2}}u(t)\|^{2\gamma} \leq C\left\Vert u(t)\right\Vert^{2\gamma} \text{\ \ \ } \forall t\in[0,T).
\end{eqnarray*}

Therefore, from the continuity of the vector $\left(u^{\prime},u\right)$ with values in $H \times H$ it now follows that $u(T)=0$ implies $u^{\prime}(T)=0$, hence $u\equiv 0$ by backward uniqueness, a contradiction. Hence, $T=+\infty$, we obtain (\ref{condition11}) and (\ref{condition22}), and the conclusion follows.

\end{proof}  

Our next result is the generalization to the vector case of the slow-fast alternative established in \cite{AMH} for the scalar equation. By relying on a technique introduced by Ghisi, Gobbino and Haraux \cite{GhGoHa}, we prove that all non-zero solutions to (\ref{1})  are either slow solutions or fast solutions. 

\begin{theorem}[The slow-fast alternative]\label{Main_Theorem2}
Assume that $g$  is locally Lipschitz continuous and satisfies (\ref{QA1}) and $l<1$, $l<\alpha <\frac{\beta(1+l)+l}{\beta+2}$.  Let $u(t)$ be a global solution to (\ref{1}). Then, one and only one of the following statements apply: 
 \begin{enumerate}
  \item (Fast solutions) There exist some positive constants $\eta$, $C$ and $T$ such that  
  \begin{equation*}
  \|u'(t)\|^{2} \ge  \eta \|A^{\frac{1}{2}}u(t)\|^{2\left(\frac{\beta +2}{l+2}\right)},\,\,\,\, \, \forall t\geq T,
  \end{equation*}
  and
 \begin{equation}\label{fast}
 E(t) \leq \frac{C}  {(t+1)^{\frac{l+2}{\alpha-l}}},\,\,\,\, \forall t\geq 0.
 \end{equation}
\item (Slow solutions) There exist some positive constants $M$, $c$ and $T$ such that 
\begin{equation*}
  \|u'(t)\|^{2} \leq  M \|A^{\frac{1}{2}}u(t)\|^{2\left(\frac{\beta +2}{l+2}\right)},\,\,\,\, \, \forall t\geq T,
  \end{equation*}
  and
  \begin{equation}\label{slow}
  \|u(t)\|  \geq \frac{c}{(1+t)^{\frac{\alpha+1}{\beta-\alpha}}},\,\,\,\, \, \forall t\geq 0.
 \end{equation}
 \end{enumerate}
 
\end{theorem}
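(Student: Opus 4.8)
The strategy is a dynamical dichotomy built on the ratio of the kinetic to the potential part of the energy. Two quantities appear: the \emph{engine} functional $H(t)=\|u'(t)\|^{2}/\|A^{\frac12}u(t)\|^{2\gamma}$ with $\gamma=\frac{\beta+1}{\alpha+1}$, for which the differential inequality \eqref{derivative_HH} of Theorem \ref{Main_Theorem} is already available, and the \emph{marker} ratio $\mathcal H(t)=\|u'(t)\|^{2}/\|A^{\frac12}u(t)\|^{2(\beta+2)/(l+2)}$, in terms of which the two alternatives are literally phrased. In the regime $l<\alpha<\frac{\beta(1+l)+l}{\beta+2}$ one checks that $\gamma>\frac{\beta+2}{l+2}$, so the two are linked by $\mathcal H=H\,\|A^{\frac12}u\|^{2(\gamma-(\beta+2)/(l+2))}$ with an extra factor that tends to $0$. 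Before using them I would record the reductions: since $u\not\equiv0$, Proposition \ref{PRO7} forbids the simultaneous vanishing of $u$ and $u'$, and both ratios are tracked on the maximal interval where $u\neq0$, shown to be all of $[T_0,+\infty)$ by the backward-uniqueness argument at the end of Theorem \ref{Main_Theorem}; the hypothesis $l<1$ makes $H$ and $\mathcal H$ of class $C^{1}$ through the instants where $u'(t)=0$, since the equation gives $\|u'\|^{l}u'=O(s-t)$, hence $\|u'\|^{2}=O(s-t)$ and $H(t)=H'(t)=0$ there. Finally $E(t)\to0$ by the upper estimate of Theorem \ref{Th2}, so $\|A^{\frac12}u(t)\|\to0$ by \eqref{estimate_u}.

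The core is the analysis of \eqref{derivative_HH}. Its right-hand side has the form $\|A^{\frac12}u\|^{\gamma(\alpha-l)}\bigl[-c_1 H^{(\alpha+2-l)/2}+c_2+c_3\,\Gamma\,H^{3/2}\bigr]$, where the strictly negative term comes from the dissipation via \eqref{QA1}, the constant $c_2$ and the cross terms come from absorbing the sign-indefinite quantity $\|A^{\frac12}u\|^{\beta}(Au,u')$ by Young's inequality exactly as the terms $H_2$, $H_5$ were treated, and $\Gamma=(\sup\|A^{\frac12}u\|)^{\gamma-1-\gamma(\alpha-l)}\to0$. Since the regime forces $\alpha<1+l$, one has $(\alpha+2-l)/2<3/2$, so this inequality is precisely of the type handled by the technique of \cite{GhGoHa}: together with $\Gamma\to0$ it shows that $H$ cannot stabilise at an intermediate value, so exactly one of two regimes occurs, namely $H$ bounded or $H(t)\to+\infty$.

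In the bounded regime the solution is slow. Indeed $\mathcal H=H\,\|A^{\frac12}u\|^{2(\gamma-(\beta+2)/(l+2))}\to0$ gives at once the marker bound $\|u'\|^{2}\le M\|A^{\frac12}u\|^{2(\beta+2)/(l+2)}$, and the $y:=\|u\|^{2}$ computation of Theorem \ref{Main_Theorem}, run with $H$ bounded, yields \eqref{slow}. In the unbounded regime the solution is fast: once one knows that the kinetic part dominates, i.e. $\|u'\|^{l+2}\gtrsim E$ (equivalently $\mathcal H\ge\eta$), the energy identity \eqref{V1} together with \eqref{QA1} gives $E'\le-\eta_2\|u'\|^{\alpha+2}\le-c\,E^{(\alpha+2)/(l+2)}$, and integrating this Bernoulli inequality produces \eqref{fast} with exponent $\frac{l+2}{\alpha-l}$. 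The two alternatives are mutually exclusive by a rate comparison: \eqref{fast} forces $\|u(t)\|\lesssim t^{-(l+2)/((\alpha-l)(\beta+2))}$, whose exponent is strictly larger than $\frac{\alpha+1}{\beta-\alpha}$ throughout the regime $\alpha<\frac{\beta(1+l)+l}{\beta+2}$, in contradiction with \eqref{slow}.

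The main obstacle is the fast case, namely extracting kinetic dominance $\|u'\|^{l+2}\gtrsim E$ from the mere divergence $H\to\infty$: because $\gamma>\frac{\beta+2}{l+2}$, knowing $\|A^{\frac12}u\|^{2\gamma}=o(\|u'\|^{2})$ is weaker than the sought $\|A^{\frac12}u\|^{\beta+2}=o(\|u'\|^{l+2})$, so the implication requires the quantitative asymptotic information furnished by the \cite{GhGoHa} analysis of \eqref{derivative_HH}, and it is exactly here that $\|A^{\frac12}u\|\to0$ and the one-sidedness of the hypothesis \eqref{QA1} must be used carefully. The subordinate difficulties are the absorption of the cross term $\|A^{\frac12}u\|^{\beta}(Au,u')$ without destroying the single-sign-change structure of the bracket, and the exceptional instants where $u'=0$ or $u=0$, which are handled respectively by the $l<1$ regularity of the ratios and by backward uniqueness.
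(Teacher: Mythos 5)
Your overall framework (ratio functionals, GhGoHa-type differential inequalities, backward uniqueness to rule out vanishing of $u$, the $l<1$ regularity of the ratios at zeros of $u'$) matches the paper's, and your slow branch and the exclusivity-by-rate-comparison are sound. But there is a genuine gap in the fast branch, and you have named it yourself without closing it. Your dichotomy is ``either $H$ bounded or $H\to\infty$'', and from $H\to\infty$ you must deduce the kinetic-dominance marker $\|u'(t)\|^{2}\ge\eta\|A^{\frac12}u(t)\|^{2(\beta+2)/(l+2)}$, i.e. that $\mathcal{H}$ (the paper's $K$, see \eqref{definition_K}) is bounded below. As you observe, this does not follow from unboundedness of $H$: since $\mathcal{H}=H\,\|A^{\frac12}u\|^{2(\gamma-\hat\gamma)}$ with $\gamma-\hat\gamma>0$ and $\|A^{\frac12}u(t)\|\to0$, the divergence of $H$ is qualitatively weaker than what alternative (1) requires, and deferring to ``the quantitative asymptotic information furnished by the \cite{GhGoHa} analysis of \eqref{derivative_HH}'' is a citation, not an argument. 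Worse, the natural route to this implication is contraposition: if $\mathcal{H}$ is not eventually bounded below, show the solution is slow, i.e. show $H$ is bounded. So the entire difficulty of the theorem is concentrated precisely in the branch you leave open; your reorganization does not circumvent it.

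The paper's proof avoids this by dichotomizing on the marker ratio $K$ itself rather than on $H$: either \eqref{H_H3} holds for some $\eta$, $T$, in which case the fast decay \eqref{fast} follows by the direct energy comparison (your Bernoulli-type integration); or \eqref{H_H3} fails for every $\eta$, which by definition yields a sequence $t_n\to\infty$ with $u(t_n)\ne0$ and $K(t_n)\to0$, so exhaustiveness is automatic. Two steps then give the slow alternative. First, a differential inequality for $K$ (inequality \eqref{derivative_H3}, of the same type as \eqref{derivative_HH} but for the exponent $\hat\gamma=\frac{\beta+2}{l+2}$, with the additional small term $C_1\|A^{\frac12}u\|^{\nu}$) shows that once $K$ is small at a time where $\|A^{\frac12}u\|$ is also small, $K$ remains bounded on the whole half-line. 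Second, the boundedness of $K$ is fed back into the inequality for $H$: the dangerous cross term, which in \eqref{derivative_HH} is $2\gamma\,\Gamma\,H^{3/2}$ (superlinear, power $3/2$ exceeding the dissipation power $\frac{\alpha+2-l}{2}$, whence the need for $H$ to start small in Theorem \ref{Main_Theorem}), is rewritten as $2\gamma(1+m)H\,K^{1/2}\|A^{\frac12}u\|^{\hat\gamma-1}$, which is \emph{linear} in $H$ once $K$ is bounded and can be absorbed by Young's inequality into the dissipation term plus a constant; this bounds $H$ with no smallness assumption on $H$, and the $y=\|u\|^{2}$ argument then gives \eqref{slow}. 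These two ingredients --- the differential inequality for $K$ and the $K$-into-$H$ bootstrap --- are exactly what your proposal lacks, and without them neither the fast branch nor, consequently, the alternative itself is established. (A lesser point: to get your time-dependent coefficient $\Gamma\to0$ you must also re-derive \eqref{derivative_HH} on $[T,\infty)$ with $\sup_{t\ge T}\|A^{\frac12}u(t)\|$ in place of $\sigma_0$, since in Theorem \ref{Main_Theorem} that coefficient is a constant fixed by the initial data.)
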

\begin{proof}
{\bf Fast solutions:} 

We first establish that if for some $\eta>0$ we have 
\begin{equation}\label{H_H3}
 \|u'(t)\|^{2} \ge  \eta \|A^{\frac{1}{2}}u(t)\|^{2\left(\frac{\beta +2}{l+2}\right)},\,\,\,\, \, \forall t\geq T,
\end{equation}
then (\ref{fast}) holds.

Using (\ref{energy}), we obtain
\begin{eqnarray*} \frac{l+1}{l+2}\|u'(t)\|^{l+2}=-\frac{1}{\beta+2}\|A^{\frac{1}{2}}u(t)\|^{\beta+2}+E(t)\ge E(t)-\delta\|u'(t)\|^{l+2},\end{eqnarray*} 
where $\delta:=\left[(\beta+2)\eta^{\frac{l+2}{2}}\right]^{-1}$. Then
\begin{eqnarray*} \|u'(t)\|^{\alpha+2}\ge \left(\frac{l+1}{l+2}+\delta \right)^{-\frac{\alpha+2}{l+2}}E(t)^{\frac{\alpha+2}{l+2}}.\end{eqnarray*} 
From the energy identity, we deduce
\begin{eqnarray*} E^{\prime}(t)\leq -\widehat{C} E(t)^{\frac{\alpha-l}{l+2}+1},\end{eqnarray*} 
where $\widehat{C}:=\eta \left(\Frac{l+1}{l+2}+\delta \right)^{-\frac{\alpha+2}{l+2}}$. Integrating between $T$ and $t$ and since $E(t)\ge0$, we deduce that there exists a constant $C_1$ such that
\begin{eqnarray*} E(t)\leq C_1 (t-T)^{-\frac{l+2}{\alpha-l}},\,\,\,\, \forall t\ge T+1.\end{eqnarray*} The result follows immediately. \bigskip

{\bf Slow solutions:} In addition to the already defined number $\gamma=\Frac{\beta-\alpha}{\alpha+1}+1 = \Frac{\beta+1}{\alpha+1}$, let us introduce $\hat{\gamma}=\Frac{\beta-l}{l+2}+1 = \Frac{\beta+2}{l+2}+1 $. The condition
\begin{eqnarray*}
\alpha <\frac{\beta(1+l)+l}{\beta+2},
\end{eqnarray*}
implies immediately
\begin{eqnarray*}\frac{\beta-l}{l+2}<\frac{\beta-\alpha}{\alpha+1}.
\end{eqnarray*}
Therefore \begin{eqnarray*}
\hat{\gamma}<\gamma.
\end{eqnarray*}

In this section, we consider the energies $H(t)$ and 
\begin{eqnarray}\label{definition_K}
K(t):=\frac{\left\Vert u^{\prime}(t)\right\Vert^{2}}{\|A^{\frac{1}{2}}u(t)\|^{2\hat{\gamma}}},
\end{eqnarray} defined whenever $u(t)\not=0$.
In this part of the proof, it remains to consider the case where (\ref{H_H3}) is false for every $\eta$. Then, assuming the solution to be non-trivial there exists a sequence $t_n \rightarrow +\infty$ such that $ u(t_n)\not=0  \quad \text{and  } \quad 
K(t_n)\longrightarrow 0$. Therefore, for any $\varepsilon_1>0$ there exists $N(\varepsilon_1)\in \mathbb{N}$ such that 
\begin{eqnarray}\label{H3_tn} u(t_n)\not=0  \quad \text{and  } \quad 
K(t_n)\leq \Frac{\varepsilon_1}{2}, \text{\ \ \ } \forall n \ge N(\varepsilon_1).
\end{eqnarray}
Under this assumption, we prove that a similar estimate holds true for all sufficiently large times,  namely for some $n_0\in \mathbb{N}$ 
\begin{eqnarray}\label{H3_t}
u(t)\not=0  \quad \text{and  } \quad K(t)\leq M, \text{\ \ \ } \forall t\ge t_{n_0},
\end{eqnarray}
for a suitable constant $M\ge \frac{\varepsilon_1}{2}$. In order to achieve this property, we select another positive number, $ \varepsilon_2 $, to be fixed later in the proof, and we consider $n_0\in \mathbb{N}$ large enough to achieve the additional condition 
$$ \|A^{\frac{1}{2}}u(t)\| \le \varepsilon_2.$$
Let us set
\begin{eqnarray*}
\bar{T}:=\sup \{t\ge t_{n_0}:\forall \tau\in [t_{n_0},t], \text{\ \ }u(\tau)\ne 0\}.
\end{eqnarray*}
By (\ref{H3_tn}), we deduce that $u(t_{n_0})\ne 0$ and we have that $\bar{T}>0$. If $\bar{T}<+\infty$, then $u(\bar{T})=0$. Let us consider the energy $K$, which is defined by (\ref{definition_K}) for every $t\in[t_{n_0},\bar{T})$. \\

If $u'(t) = 0$, then we claim that $K$ is differentiable at $t$ with $K'(t) = 0$.  Indeed by the equation we find that $\|u'\|^{l}u'(s) = O(s-t)$ for $s$ close to $t$, then  since $l<1$ we deduce $\left\Vert u^{\prime}(t)\right\Vert^{2} = O(s-t)$, thereby proving the claim. \\

If $u'(t) \not= 0$, then again $K$ is differentiable at $t$ with:
\begin{eqnarray*}
K^{\prime}(t)=\frac{\displaystyle \frac{d}{dt} \left(\left\Vert u^{\prime}(t) \right\Vert^{2+l}\right)}{\|A^{\frac{1}{2}}u(t)\|^{2\hat{\gamma}}\left\Vert u^{\prime}(t) \right\Vert^{l}}-\frac{\displaystyle \frac{d}{dt} \left(\|A^{\frac{1}{2}}u(t)\|^{2\hat{\gamma}}\left\Vert u^{\prime}(t) \right\Vert^{l}\right)}{\|A^{\frac{1}{2}}u(t)\|^{2\hat{\gamma}}\left\Vert u^{\prime}(t) \right\Vert^{l}}K(t).
\end{eqnarray*}
Taking into account (\ref{energy}) and the energy identity, we deduce 
\begin{eqnarray}\label{H3_prime}
K^{\prime}(t)&\leq&-\rho\, \eta_2\frac{\left\Vert u^{\prime}(t)\right\Vert^{\alpha+2-l}}{\|A^{\frac{1}{2}}u(t)\|^{2\hat{\gamma}}}-\rho \frac{ \|A^{\frac{1}{2}}u(t)\| ^{\beta-2\hat{\gamma}}\left ( u^{\prime}(t),Au(t)\right )}{\left\Vert u^{\prime}(t) \right\Vert^{l}} \nonumber \\
&&-\frac{\displaystyle \frac{d}{dt} \left(\|A^{\frac{1}{2}}u(t)\|^{2\hat{\gamma}}\left\Vert u^{\prime}(t) \right\Vert^{l}\right)}{\|A^{\frac{1}{2}}u(t)\|^{2\hat{\gamma}}\left\Vert u^{\prime}(t) \right\Vert^{l}}K(t)=:K^1+K^2+K^3,
\end{eqnarray}
where $\rho:=\displaystyle \frac{l+2}{l+1}$.
\\

Let us estimate $K^1$, $K^2$ and $K^3$. Using (\ref{definition_K}), we observe 
\begin{eqnarray}\label{H3_1}
K^1&=&- \rho \, \eta_2\left(\frac{\left\Vert u^{\prime}(t)\right\Vert^2}{\|A^{\frac{1}{2}}u(t)\|^{2\hat{\gamma}}} \right)^{\frac{\alpha+2-l}{2}}\frac{\|A^{\frac{1}{2}}u(t)\|^{\hat{\gamma}(\alpha+2-l)}}{\|A^{\frac{1}{2}}u(t)\|^{2\hat{\gamma}}} \nonumber\\ 
&=&-\rho \, \eta_2 K^{\frac{\alpha+2-l}{2}}(t)\|A^{\frac{1}{2}}u(t)\|^{\hat{\gamma}( \alpha-l)}.
\end{eqnarray}
In order to estimate $K^2$, we use Young's inequality applied with the conjugate exponents $\frac{\alpha+2-l}{1-l}$ and $\frac{\alpha+2-l}{\alpha+1}$,
\begin{eqnarray*}
\left\vert K^2 \right\vert &\leq& \rho\left\Vert u^{\prime}(t)\right\Vert^{1-l} \cdot \frac{\|A^{\frac{1}{2}}u(t)\|^{\beta+1}}{\|A^{\frac{1}{2}}u(t)\|^{2\hat{\gamma}}}\\
&\leq& \frac{1}{\|A^{\frac{1}{2}}u(t)\|^{2\hat{\gamma}}}\left(\delta_1 \left\Vert u^{\prime}(t)\right\Vert^{\alpha+2-l}+\delta_2  \|A^{\frac{1}{2}}u(t)\|^{\frac{(\beta+1)(\alpha+2-l)}{\alpha+1}}\right),
\end{eqnarray*}
where $\delta_1:=\displaystyle \rho\, \eta_2\left( \frac{1-l}{\alpha+2-l}\right)$ and $\delta_2:= \displaystyle \frac{\rho}{\eta_2}\left( \frac{\alpha+1}{\alpha+2-l}\right)$, and taking into account that $\gamma=\frac{\beta+1}{\alpha+1}$, we deduce
\begin{eqnarray*}
\left\vert K^2 \right\vert &\leq&\delta_1 \left(  \frac{\left\Vert u^{\prime}(t)\right\Vert^2}{\|A^{\frac{1}{2}}u(t)\|^{2\hat{\gamma}}} \right)^{\frac{\alpha+2-l}{2}}\frac{\|A^{\frac{1}{2}}u(t)\|^{\hat{\gamma}(\alpha+2-l)}}{\|A^{\frac{1}{2}}u(t)\|^{2\hat{\gamma}}}+\delta_2 \|A^{\frac{1}{2}}u(t)\| ^{(\beta+1)\left(\frac{\alpha+2-l}{\alpha+1}-\frac{2}{l+2} \right)-\frac{2}{l+2}} \nonumber\\
&=& \delta_1K^{\frac{\alpha+2-l}{2}}(t)\|A^{\frac{1}{2}}u(t)\|^{\hat{\gamma} (\alpha-l)}+\delta_2 \|A^{\frac{1}{2}}u(t)\| ^{\mu},
\end{eqnarray*}
where $\mu:={(\beta+1)\left(\frac{\alpha+2-l}{\alpha+1}-\frac{2}{l+2} \right)-\frac{2}{l+2}}  = \frac{(\beta+1)(\alpha+2-l)}{\alpha+1} - 2 {\hat{\gamma}}$.

By simple calculations, we deduce from $
\alpha <\frac{\beta(1+l)+l}{\beta+2}$
that \begin{eqnarray*}
\hat{\gamma}(\alpha-l)<\mu, 
\end{eqnarray*}
therefore
\begin{eqnarray}\label{H3_2}
\left\vert K^2 \right\vert &\leq& \|A^{\frac{1}{2}}u(t)\|^{\hat{\gamma} (\alpha-l)}( \delta_1K^{\frac{\alpha+2-l}{2}}(t)+\delta_2 \|A^{\frac{1}{2}}u(t)\| ^{\nu}),
\end{eqnarray}
with $ \nu= \mu-\hat{\gamma}(\alpha-l)>0$.

{ For simplicity in the sequel we shall write 
\begin{equation}\label{m}
l/2: = m>0.
\end{equation}
}
In order to estimate $K^3$, we use (\ref{definition_K}) and we have that
\begin{eqnarray*}
K^3&=&-\frac{\displaystyle \frac{d}{dt} \left(\|A^{\frac{1}{2}}u(t)\|^{2\hat{\gamma}(1+m)}K^m(t)\right)}{\|A^{\frac{1}{2}}u(t)\|^{2\hat{\gamma}}\left\Vert u^{\prime}(t) \right\Vert^{l}}K(t)=-mK^m(t)K^{\prime}(t)\frac{\|A^{\frac{1}{2}}u(t)\|^{2\hat{\gamma} m}}{\left\Vert u^{\prime}(t) \right\Vert^{l}}\\
&&- 2\hat{\gamma} (1+m)K^{m+1}(t)\frac{\|A^{\frac{1}{2}}u(t)\|^{2(\hat{\gamma} m-1)}}{\left\Vert u^{\prime}(t) \right\Vert^{l}}\left ( u^{\prime}(t),Au(t)\right )=:K^4+K^5.
\end{eqnarray*}
We observe that taking  account  of (\ref{definition_K}), we have
\begin{eqnarray}\label{H3_4}
K^4=-mK^{\prime}(t),
\end{eqnarray}
and
\begin{eqnarray*}
\left\vert K^5 \right\vert \leq 2 \hat{\gamma}(1+m)K^{m+1}(t)\frac{\|A^{\frac{1}{2}}u(t)\|^{2\hat{\gamma} m}}{\left\Vert u^{\prime}(t)\right\Vert^{l}}\cdot \frac{\left\Vert u^{\prime}(t)\right\Vert}{\|A^{\frac{1}{2}}u(t)\|}=2\hat{\gamma}(1+m) K^{\frac{3}{2}}(t)\|A^{\frac{1}{2}}u(t)\| ^{\hat{\gamma}-1}.
\end{eqnarray*} We observe that our condition on $\alpha$ can be written in the form 
$$ \alpha (\beta+2)< (\beta+2)(1+l)-(l+2) \Rightarrow (\beta+2) (1+l-\alpha)> l+2$$ 
$$\Rightarrow \hat{\gamma} (1+l-\alpha)> 1  \Rightarrow \hat{\gamma}-1>\hat{\gamma}(\alpha-l).$$
Using (\ref{estimate_u}) we therefore obtain
\begin{eqnarray}\label{H3_5}
\left\vert K^5 \right\vert \leq C_0K^{\frac{3}{2}}(t)\|A^{\frac{1}{2}}u(t)\| ^{\hat{\gamma} (\alpha-l)}.
\end{eqnarray} 
Then, taking into account (\ref{H3_1})-(\ref{H3_5}) in (\ref{H3_prime}), we deduce that
\begin{equation}\label{derivative_H3}
 K^{\prime}(t)\!\!\leq\! \|A^{\frac{1}{2}}u(t)\| ^{\hat{\gamma} (\alpha-l)} \left[\!-\widehat{\rho} \, \eta_2 K^{\frac{\alpha+2-l}{2}}(t)\!+C_1\|A^{\frac{1}{2}}u(t)\| ^{\nu}+ C_2K^{\frac{3}{2}}(t)\! \right],
\end{equation}
where $\widehat{\rho}:=\frac{\rho(\alpha+1)}{(1+m)(\alpha+2-l)}$.\\

Here we observe that \eqref {derivative_H3} is still valid if $u'(t) = 0$, since then $K(t) = 0 = K'(t)$ and the RHS is non-negative. Now, let $$h(s)=-\widehat{\rho} \, \eta_2 s^{\frac{\alpha+2-l}{2}}+C_1 \varepsilon_2^{\nu}+C_2 s^{\frac{3}{2}},$$  and 
$$\tilde{h}(s)=-\widehat{\rho} \, \eta_2 s^{\frac{\alpha+2-l}{2}}+C_2 s^{\frac{3}{2}}.$$  
Since $\alpha <\frac{\beta(1+l)+l}{\beta+2}$, we have $$\alpha<1+\frac{l(\beta+1)}{\beta+2}\Longrightarrow \alpha+2-l<3. $$
We observe that $$\tilde{h}(s)<0, \text{\ \ if  } s \rightarrow 0.$$ Fixing  $\varepsilon_1$ sufficiently small to achieve $\tilde{h}(\varepsilon_1)<0 $ , we can then adjust $\varepsilon_2$ so that $h(\varepsilon_1)<0. $

Now, if $K$ is not bounded for all $t\in(t_{n_0},\bar{T})$, then there exists $T_1\in(t_{n_0},\bar{T})$ such that $K(T_1)=\varepsilon_1 $ and $K(t)< \varepsilon_1 $ for all $t\in (t_{n_0},T_1)$.
 
 By (\ref{derivative_H3}) and taking into account that $u\in \mathcal{C}^1(\mathbb{R}^+,H)$ and $\left\Vert u^{\prime}\right\Vert ^{l}u^{\prime}\in \mathcal{C}^1(\mathbb{R}^+,H) $, we have $$K^{\prime}(T_1)\leq \|A^{\frac{1}{2}}u(T_1)\| ^{\hat{\gamma}( \alpha-l)}h(\varepsilon_1)\leq \left\Vert u(T_1) \right\Vert ^{\hat{\gamma}( \alpha-l)}h(\varepsilon_1)<0,$$
then since $T_1\in(t_{n_0},\bar{T})$ implies $\left\Vert u(T_1) \right\Vert>0$,  $K$ decreases near $T_1$. This contradicts the definition of $T_1$ and we can claim that $K$ is bounded for all $t\in (t_{n_0},\bar{T})$.

We claim that $\bar{T}=+\infty$. Let us assume by contradiction that this is not the case. Then, taking into account that $K$ is bounded for all $t\in [t_{n_0},\bar{T})$, we observe that
\begin{eqnarray*}
\left\Vert u^{\prime}(t)\right\Vert^{2}\leq M \|A^{\frac{1}{2}}u(t)\|^{2\hat{\gamma}}, \quad  \forall t\in[t_{n_0},\bar{T}).
\end{eqnarray*}

Therefore, from the continuity of the vector $\left(u^{\prime},u\right)$ with values in $H \times H$ it now follows that $u(\bar{T})=0$ implies $u^{\prime}(\bar{T})=0$, hence $u\equiv 0$ by backward uniqueness, a contradiction. Hence, $\bar{T}=+\infty$. Then, we obtain (\ref{H3_t}). 


So we are left to prove that for some $M>0$
\begin{eqnarray}\label{condition2}
\frac{\left\Vert u^{\prime}(t)\right\Vert^2}{\|A^{\frac{1}{2}}u(t)\|^{2\left(\frac{\beta-\alpha}{\alpha+1}+1\right)}}\leq M \text{\ \ \ } \forall t\ge t_{n_0}.
\end{eqnarray}
Let us consider the energy $H$, which is defined by (\ref{definition_H}) for every $t\ge t_{n_0}$.

Assuming first $u'(t)\not = 0$,  we can differentiate $H$ at t, and we find
\begin{eqnarray*}
H^{\prime}(t)=\frac{\displaystyle \frac{d}{dt} \left(\left\Vert u^{\prime}(t) \right\Vert^{2+l}\right)}{\|A^{\frac{1}{2}}u(t)\|^{2\gamma}\left\Vert u^{\prime}(t) \right\Vert^{l}}-\frac{\displaystyle \frac{d}{dt} \left(\|A^{\frac{1}{2}}u(t)\|^{2\gamma}\left\Vert u^{\prime}(t) \right\Vert^{l}\right)}{\|A^{\frac{1}{2}}u(t)\|^{2\gamma}\left\Vert u^{\prime}(t) \right\Vert^{l}}H(t).
\end{eqnarray*}
Taking into account (\ref{energy}) and the energy identity, we deduce 
\begin{eqnarray}\label{H1_prime}
H^{\prime}(t)&\leq&-\rho\, \eta_2\frac{\left\Vert u^{\prime}(t)\right\Vert^{\alpha+2-l}}{\|A^{\frac{1}{2}}u(t)\|^{2\gamma}}-\rho \frac{ \|A^{\frac{1}{2}}u(t)\| ^{\beta-2\gamma}\left ( u^{\prime}(t),Au(t)\right )}{\left\Vert u^{\prime}(t) \right\Vert^{l}} \nonumber \\
&&-\frac{\displaystyle \frac{d}{dt} \left(\|A^{\frac{1}{2}}u(t)\|^{2\gamma}\left\Vert u^{\prime}(t) \right\Vert^{l}\right)}{\|A^{\frac{1}{2}}u(t)\|^{2\gamma}\left\Vert u^{\prime}(t) \right\Vert^{l}}H(t)=:H^1+H^2+H^3,
\end{eqnarray}
where $\rho:=\Frac{l+2}{l+1}$.
\\

Let us estimate $H^1$, $H^2$ and $H^3$. Using (\ref{definition_H}), we observe that
\begin{eqnarray}\label{H1_1}
H^1&=&- \rho \, \eta_2\left(\frac{\left\Vert u^{\prime}(t)\right\Vert^2}{\|A^{\frac{1}{2}}u(t)\|^{2\gamma}} \right)^{\frac{\alpha+2-l}{2}}\frac{\|A^{\frac{1}{2}}u(t)\|^{\gamma(\alpha+2-l)}}{\|A^{\frac{1}{2}}u(t)\|^{2\gamma}} \nonumber\\
&=&-\rho \, \eta_2 H^{\frac{\alpha+2-l}{2}}(t)\|A^{\frac{1}{2}}u(t)\|^{\gamma( \alpha-l)}.
\end{eqnarray}
In order to estimate $H^2$, we use Young's inequality applied with the conjugate exponents $\frac{\alpha+2-l}{1-l}$ and $\frac{\alpha+2-l}{\alpha+1}$,
\begin{eqnarray*}
\left\vert H^2 \right\vert &\leq& \rho\left\Vert u^{\prime}(t)\right\Vert^{1-l} \cdot \frac{\|A^{\frac{1}{2}}u(t)\|^{\beta+1}}{\|A^{\frac{1}{2}}u(t)\|^{2\gamma}}=\rho \frac{\left\Vert u^{\prime}(t)\right\Vert^{1-l}}{\|A^{\frac{1}{2}}u(t)\|^{\frac{2\gamma(1-l)}{\alpha+2-l}}}\cdot \frac{\|A^{\frac{1}{2}}u(t)\|^{\frac{2\gamma(1-l)}{\alpha+2-l}+\beta+1}}{\|A^{\frac{1}{2}}u(t)\|^{2\gamma}}\\
&\leq& \delta_1 \left(  \frac{\left\Vert u^{\prime}(t)\right\Vert^{1-l}}{\|A^{\frac{1}{2}}u(t)\|^{\frac{2\gamma(1-l)}{\alpha+2-l}}} \right)^\frac{\alpha+2-l}{1-l}+\delta_2 \left( \|A^{\frac{1}{2}}u(t)\|^{\frac{2\gamma(1-l)}{\alpha+2-l}+\beta+1-2\gamma}\right)^{\frac{\alpha+2-l}{\alpha+1}},
\end{eqnarray*}
where $\delta_1:=\displaystyle \rho\, \eta_2\left( \frac{1-l}{\alpha+2-l}\right)$ and $\delta_2:= \displaystyle \frac{\rho}{\eta_2}\left( \frac{\alpha+1}{\alpha+2-l}\right)$, and taking into account that $\gamma=\frac{\beta+1}{\alpha+1}$, we deduce
\begin{eqnarray}\label{H1_2}
\left\vert H^2 \right\vert &\leq&\delta_1 \left(  \frac{\left\Vert u^{\prime}(t)\right\Vert^2}{\|A^{\frac{1}{2}}u(t)\|^{2\gamma}} \right)^{\frac{\alpha+2-l}{2}}\frac{\|A^{\frac{1}{2}}u(t)\|^{\gamma(\alpha+2-l)}}{\|A^{\frac{1}{2}}u(t)\|^{2\gamma}}+\delta_2 \|A^{\frac{1}{2}}u(t)\| ^{\gamma( \alpha-l)} \nonumber\\
&=& \delta_1H^{\frac{\alpha+2-l}{2}}(t)\|A^{\frac{1}{2}}u(t)\|^{\gamma (\alpha-l)}+\delta_2 \|A^{\frac{1}{2}}u(t)\| ^{\gamma( \alpha-l)}.
\end{eqnarray}
In order to estimate $H^3$, using (\ref{m}) and (\ref{definition_H}), we have that
\begin{eqnarray*}
H^3&=&-\frac{\displaystyle \frac{d}{dt} \left(\|A^{\frac{1}{2}}u(t)\|^{2\gamma(1+m)}H^m(t)\right)}{\|A^{\frac{1}{2}}u(t)\|^{2\gamma}\left\Vert u^{\prime}(t) \right\Vert^{l}}H(t)=-mH^m(t)H^{\prime}(t)\frac{\|A^{\frac{1}{2}}u(t)\|^{2\gamma m}}{\left\Vert u^{\prime}(t) \right\Vert^{l}}\\
&&- 2\gamma (1+m)H^{m+1}(t)\frac{\|A^{\frac{1}{2}}u(t)\|^{2(\gamma m-1)}}{\left\Vert u^{\prime}(t) \right\Vert^{l}}\left ( u^{\prime}(t),Au(t)\right )=:H^4+H^5.
\end{eqnarray*}
We observe that taking into account (\ref{definition_H}), we have
\begin{eqnarray}\label{H1_4}
H^4=-mH^{\prime}(t),
\end{eqnarray}
and using (\ref{definition_K}), we obtain
\begin{eqnarray}\label{H1_5}
\left\vert H^5 \right\vert &\leq& 2 \gamma(1+m)H^{m+1}(t)\frac{\|A^{\frac{1}{2}}u(t)\|^{2\gamma m}}{\left\Vert u^{\prime}(t)\right\Vert^{l}}\cdot \frac{\left\Vert u^{\prime}(t)\right\Vert}{\|A^{\frac{1}{2}}u(t)\|}\\ \nonumber
&=&2\gamma(1+m)H(t) K^{\frac{1}{2}}(t)\|A^{\frac{1}{2}}u(t)\| ^{\hat{\gamma}-1}.
\end{eqnarray} 
We observe, after some calculations, that 
\begin{eqnarray*}
\hat{\gamma}-1-\gamma(\alpha-l)>0 \Longleftrightarrow \alpha(\beta+2)(1+l)<l(\beta+1)(l+2)+\beta-l,
\end{eqnarray*}
which reduces easily to the condition
\begin{eqnarray*}
\alpha <\frac{\beta(1+l)+l}{\beta+2}.
\end{eqnarray*}
Then, taking into account (\ref{H1_1})-(\ref{H1_5}) in (\ref{H1_prime}), we deduce that
\begin{eqnarray*}
\!\!\!\!\!H^{\prime}(t)&\!\!\leq\!\!& \|A^{\frac{1}{2}}u(t)\| ^{\gamma (\alpha-l)} \left[-\widehat{\rho} \, \eta_2 H^{\frac{\alpha+2-l}{2}}(t)+\frac{\widehat{\rho}}{\eta_2}+2\gamma H(t)K^{\frac{1}{2}}(t)\|A^{\frac{1}{2}}u(t)\|^{\hat{\gamma}-1-\gamma(\alpha-l)} \right]\!,
\end{eqnarray*}
where $\widehat{\rho}:=\frac{\rho(\alpha+1)}{(1+m)(\alpha+2-l)}$. Now, we estimate the third term. We have that $K$ is bounded for all $t\ge t_{n_0}$. Then, using Young's inequality applied with the conjugate exponents $\frac{\alpha+2-l}{2}$ and $\frac{\alpha+2-l}{\alpha-l}$,
\begin{eqnarray*}
2\gamma H(t)K^{\frac{1}{2}}(t)\|A^{\frac{1}{2}}u(t)\|^{\hat{\gamma}-1-\gamma(\alpha-l)} &\leq& \widetilde{M}H(t)\\
&\leq& \frac{\widehat{\rho}\,\eta_2}{2}H^{\frac{\alpha+2-l}{2}}(t)+\frac{4(\alpha-l)}{\widehat{\rho}\,\eta_2(\alpha+2-l)^2}\widetilde{M}^{\frac{\alpha+2-l}{\alpha-l}},
\end{eqnarray*}
where $\widetilde{M}:=2\gamma M^{\frac{1}{2}}\varepsilon_2^{\hat{\gamma}-1-\gamma(\alpha-l)}$.

Then, we obtain
\begin{eqnarray*}
\!\!\!\!\!H^{\prime}(t)&\!\!\leq\!\!& \|A^{\frac{1}{2}}u(t)\| ^{\gamma (\alpha-l)} \left[-\frac{1}{2}\widehat{\rho} \, \eta_2 H^{\frac{\alpha+2-l}{2}}(t)+\widehat{K}\right]\!,
\end{eqnarray*}
where $\widehat{K}:=\displaystyle\frac{\widehat{\rho}}{\eta_2}+\frac{4(\alpha-l)}{\widehat{\rho}\,\eta_2(\alpha+2-l)^2}\widetilde{M}^{\frac{\alpha+2-l}{\alpha-l}}$. 
If $u'(t) = 0$, since as previously $H$ is differentiable at $t$ with $H'(t) = 0 = H(t)$, the inequality is still valid. 

It remains to prove that $H(t)$ is bounded for all $t\ge t_{n_0}$. Indeed if $H$ is not bounded for all $t \ge t_{n_0}$, then there exist $T_2 \ge t_{n_0}$ such that 
$$
T_2:=\inf \{t\ge t_{n_0}: \,\,\forall \tau \in [t_{n_0},t], \,\, \frac{1}{2}\widehat{\rho} \, \eta_2  H^{\frac{\alpha+2-l}{2}}(\tau) <\frac{1}{2}\widehat{\rho} \, \eta_2  H^{\frac{\alpha+2-l}{2}}(t_{n_0})  +\widehat{K} +1\}.
$$
We have 
\begin{eqnarray*}
H'(T_2) &\leq& \|A^{\frac{1}{2}}u(T_2)\| ^{\gamma (\alpha-l)} \left[-\frac{1}{2}\widehat{\rho} \, \eta_2 H^{\frac{\alpha+2-l}{2}}(T_2)+\widehat{K}\right]\\
&= & -\|A^{\frac{1}{2}}u(T_2)\| ^ {\gamma (\alpha-l)}  \left[1+ \frac{1}{2}\widehat{\rho} \, \eta_2  H^{\frac{\alpha+2-l}{2}}(t_{n_0})\right]<0,
\end{eqnarray*}
and since $T_2\ge t_{n_0},$ implies $u(T_2)\not = 0$, $H$ decreases near $T_2$. This contradicts the definition of $T_2$ and we can claim that $H$ is bounded for all $t\ge t_{n_0}$. The end of the proof is identical to that of Theorem \ref{Main_Theorem}. 

\end{proof}

\begin{remark}\rm{In the special case $A = \lambda I $, by using the scalar equation we can prove the existence of  an $N$ - dimensional variety of fast solutions. Actually, by Theorem 5.4, (4) of \cite {GhGoHa} we know that the same result is true in general for the non-singular linearly damped equation $$ u''+ u' +  \|A^{1/2}u\|^\beta Au = 0. $$  The question now arises  of whether the dimensionality of the variety of fast solutions is exactly equal to $N$ or possibly larger. The special case of the equation \begin{equation}\label{rad} u''+ u' +  \|u\|^\beta u = 0 \end{equation} shows it is exactly equal to $N$ at least in some cases. Indeed we claim that in this case, all fast solutions have their $N$ components proportional to the same fast solution of a related scalar equation.  \begin{proof}Given a solution $u$ of \eqref{rad}, any component $y$ of $u$ satisfies  $$ y''+ y' +  \|u\|^\beta y = 0. $$ Considering two different components $(y, z)$ we obtain that the Wronskian function $w(t) :=(y'z-yz')(t)$ is
 a solution of 
$$ w'+ w = 0 .$$ Hence $w(t) = w(0) \exp (-t)$. Since on the other hand $w$, being a quadratic combination of 4 functions decaying like  $\exp (-t)$, has to decay like $\exp (-2t)$, the only possibility is $w(0) = 0$, hence $w=0$. Hence any two components are proportional. Let us set $$ y_j = a_j y, $$ where $y$ is one of the non-zero components. Then we have $$ y''+ y' +  m\|y\|^\beta y = 0$$ with $m := {\{\sum_{j=1}^N a_j^2\}}^{\beta/2}$. Let us introduce $ \phi : = \mu y $ where $\mu>0$ will be chosen later. Then $\phi$ satisfies 
$ \phi''+ \phi' +  \mu m\|y\|^\beta y = 0$ or $$ \phi''+ \phi' +  \mu^{-\beta} m\|\phi\|^\beta \phi= 0, $$ so that the choice $\mu = m^{1/\beta}$ leads to 
$$ \phi''+ \phi' + \|\phi\|^\beta \phi= 0. $$  Finally we have $$ y_j = b_j \phi, $$ with $b_j = m^{-1/\beta} a_j$ and $\phi$ a fast solution of the scalar equation. Since the set of such fast solutions $\phi$ is invariant by translation in $t$, for the moment we reduced the dimension to $N+1$ instead of $N$ and we know that this is not relevant since in the scalar case the dimension is $1$. Actually the numbers $b_j$ are constrained by an additional condition: indeed we have for all $j$ the equation $$ y_j'' + y_j' +\|u\|^\beta y_j = b_j(  \phi''+ \phi' + \|u\|^\beta \phi) = 0, $$ and since at least one of the $b_j$ is not $0$, we obtain $ \phi''+ \phi' + \|u\|^\beta \phi = 0 .$ Now $$ \|u\|^\beta = {\Big\{\sum_{j=1}^N b_j^{2} \Big\}}^{\beta/2}\|\phi\|^\beta. $$ Since slow solutions do not vanish for $t$ large we conclude that $ \sum_{j=1}^N b_j^{2}  = 1$, the additional constraint we were looking for, reducing the dimension to $N$. The set of slow solutions has the structure of a 1D curve fibrated by the $(N-1)$-dimensional unit sphere. \end{proof}
However, when either $g$ is non-linear or $A$ is not a multiple of the identity, we have no information on the dimension of this set.} This is one of the last open problems of this theory. \end{remark} 

\subsection*{Acknowledgments}
M. Anguiano has been supported by Junta de Andaluc\'ia (Spain), Proyecto de Excelencia P12-FQM-2466, and in part by  European Commission, Excellent Science-European Research Council (ERC) H2020-EU.1.1.-639227.


\begin{thebibliography}{10}
\bibitem{AMH} M. Abdelli and A. Haraux, {\em Global behavior of the solutions
to a class of nonlinear second order ODE's\/,} Nonlinear Analysis, {\bf 96} (2014), 18-73.
\bibitem{AIH} F. Aloui, I. Ben Hassen and A. Haraux  {\em Compactness of trajectories to some nonlinear second order evolution equations and applications\/,} Journal de Mathematiques pures et appliquees , {\bf 100}  No, 3 (2013), 295-326.
\bibitem{GGH} M. Ghisi, M. Gobbino and A. Haraux, {\em Optimal decay estimates for the general solution to a class of 
semi-linear dissipative hyperbolic equations,} J. Eur. Math. Soc. (JEMS) {\bf 18} (2016), no. 9, 1961--1982.
 \bibitem{GhGoHa} M. Ghisi, M. Gobbino and A. Haraux, {\em Finding the exact decay rate of all solutions to some second order evolution equation with dissipation\/,} J. Funct. Anal. {\bf 271}  (2016), no. 9, 2359--2395.
\bibitem{har2} A. Haraux, {\em Sharp decay estimates of the solutions to a class
of nonlinear second order ODE's\/,} Analysis and Applications,
{\bf 9} (2011), 49-69.
\bibitem{har3} A. Haraux, {\em Slow and fast decay of solutions to some second order evolution equations\/,} J. Anal. Math. {\bf 95} (2005), 297-321.
\end{thebibliography}
\end{document}